\documentclass[preprint,1p]{elsarticle}

\makeatletter
 \def\ps@pprintTitle{%
 	\let\@oddhead\@empty
 	\let\@evenhead\@empty
 	\def\@oddfoot{\footnotesize\itshape
 		{} \hfill\today}%
 	\let\@evenfoot\@oddfoot
 }
\makeatother
\usepackage{lipsum} 
\ExplSyntaxOn
\NewDocumentEnvironment{multiequation}{b}
 {
  \vantiempham:n { #1 }
 }
 {}

\seq_new:N \l__vantiempham_md_seq

\cs_new_protected:Nn \vantiempham:n
 {
  \seq_set_split:Nnn \l__vantiempham_md_seq { \\ } { #1 }
  $$
  \seq_map_function:NN \l__vantiempham_md_seq \__vantiempham_md_item:n
  $$
 }
\cs_new_protected:Nn \__vantiempham_md_item:n
 {
  \begin{minipage}{\dim_eval:n {\displaywidth/(\seq_count:N \l__vantiempham_md_seq)} }
  \begin{equation}
  \cs_set_eq:Nc \label { ltx@label }
  #1
  \vphantom{\seq_use:Nn \l__vantiempham_md_seq {}}
  \end{equation}
  \end{minipage}
 }

\ExplSyntaxOff  
\usepackage[unicode]{hyperref}
\usepackage{enumitem}
\makeatletter
\providecommand*{\cupdot}{%
  \mathbin{%
    \mathpalette\@cupdot{}%
  }%
}
\newcommand*{\@cupdot}[2]{%
  \ooalign{%
    $\m@th#1\cup$\cr
    \hidewidth$\m@th#1\cdot$\hidewidth
  }%
}
\makeatother
\usepackage{circuitikz}

\usepackage{tikz-cd}
\usepackage{multirow}
\usepackage{url}
\usepackage{pst-node}
\usepackage{comment}

\usepackage[makeroom]{cancel}
 
\usepackage{soul}
\usepackage{tikz}

\usepackage{academicons}
\usepackage{xcolor}

\newcommand{\orcid}[1]{\href{https://orcid.org/#1}{\textcolor[HTML]{A6CE39}{\aiOrcid}}}

\usepackage{latexsym}
\usepackage{indentfirst}
\usepackage{amsxtra}
\usepackage{amssymb}
\usepackage{amsthm}
\usepackage{amsmath}
\usepackage{mathrsfs} 

\usepackage{xcolor}
\usepackage{color}

\usepackage{amsfonts}

\usepackage[capitalise]{cleveref}

\bibliographystyle{elsart-num-sort}

\newtheorem{theor}{Theorem}[section]
\newtheorem{prop}[theor]{Proposition}
\newtheorem{lemma}[theor]{Lemma}
\newtheorem{cor}[theor]{Corollary}
\theoremstyle{definition}               
\newtheorem{defin}[theor]{Definition}
\newtheorem{ex}[theor]{Example}
\newtheorem{exs}[theor]{Examples}
\newtheorem{rem}[theor]{Remark}
\newtheorem{rems}[theor]{Remarks}
\newtheorem{que}[theor]{Question}

\newtheorem*{alg*}{Algorithm}
\DeclareMathOperator{\E}{E}
\DeclareMathOperator{\Hom}{Hom}

\newtheorem*{notation*}{Notation}


\DeclareMathOperator{\Sym}{Sym}

\DeclareMathOperator{\Aut}{Aut}
\DeclareMathOperator{\End}{End}

\DeclareMathOperator{\id}{id}

\DeclareMathOperator{\im}{Im}

\usepackage{lastpage}

\pagestyle{myheadings}
\markboth{}{}

\makeatletter
\def \@evenhead {\thepage\ of \pageref{LastPage} \hfil \slshape \leftmark } 
\def \@oddhead {{\slshape \rightmark }\hfil \thepage\ of \pageref{LastPage}} 
\makeatother

\begin{document}

\let\today\relax 

\begin{frontmatter}

 	\title{\LARGE{Quasi racks,  quasi bijective and quasi non-degenerate set-theoretic solutions of the Yang–Baxter equation} \tnoteref{mytitlenote}}
	 \author[unile]{Marzia~MAZZOTTA}
  \ead{marzia.mazzotta@unisalento.it (ORCID: 0000-0001-6179-9862)}
	\author[unile]{Paola~STEFANELLI}
	\ead{paola.stefanelli@unisalento.it (ORCID: 0000-0003-3899-3151)}
\author[unibe,uniw]{Magdalena~WIERTEL}
 \ead{m.wiertel@mimuw.edu.pl (ORCID: 0000-0002-0859-6844)}

 \address[unile]{Department of Mathematics and Physics “Ennio De Giorgi”,  University of Salento, Via Provinciale Lecce-Arnesano, 73100 Lecce (Italy)}

\address[unibe]{Department of Mathematics, Vrije Universiteit Brussel, Pleinlaan 2, 1050 Brussel (Belgium)}

 \address[uniw]{Institute of Mathematics, University of Warsaw,
Banacha 2, 02--097 Warsaw (Poland)}
 
\begin{abstract}
This work initiates a systematic study of the class of \emph{quasi bijective} and \emph{quasi non-degenerate} solutions to the set-theoretic Yang--Baxter equation. The motivation stems from the observation that solutions that arise from dual weak braces belong to these classes. The notions of \emph{quasi rack} and \emph{derived solution} are introduced and examined, extending the classical definitions. Additionally, a family of quasi left non-degenerate solutions is described in terms of quasi racks and \emph{g-twists}, analogous to the left non-degenerate case.
Furthermore, we completely characterize a class of quasi racks that are P{\l}onka sum of racks.

    \end{abstract} 
	
	\begin{keyword}
		Yang--Baxter equation \sep set-theoretic solution \sep shelf \sep rack \sep quandle \sep weak brace \sep skew brace
		\MSC[2020] 16T25 \sep 81R50 \sep 20N02 \sep 20M18  
	\end{keyword}

\end{frontmatter}

\section*{Introduction}

The \emph{Yang--Baxter equation} was first introduced by Yang \cite{Ya67} in theoretical physics and, independently, by Baxter \cite{Ba72} in statistical mechanics. Since then, the search for its solutions has drawn the attention of many researchers and remains an open problem. Due to the complexity of known solutions, Drinfel'd \cite{Dr92} suggested focusing on a simplified subclass. Namely, if $X$ is a set and
$r:X\times X\to X\times X$ is a map, the pair $(X, r)$ is a \textit{set-theoretic solution of the Yang--Baxter equation}, briefly a \emph{solution}, if the identity
\begin{align}\tag{YBE}\label{YBE}
\left(r\times\id_X\right)
\left(\id_X\times \, r\right)
\left(r\times\id_X\right)
= 
\left(\id_X\times \, r\right)
\left(r\times\id_X\right)
\left(\id_X\times \, r\right)
\end{align}
is satisfied. We will write $r\left(x, y\right) = \left(\lambda_{x}\left(y\right), \rho_{y}\left(x\right)\right)$, where $\lambda_{x}$ and $\rho_{y}$ are maps from $X$ into itself, for all $x,y\in X$. Then
$(X, r)$ is a solution if and only if for all $x,y,z \in X$
    \begin{align}
     &\label{first} \lambda_x\lambda_y(z)=\lambda_{\lambda_x\left(y\right)}\lambda_{\rho_y\left(x\right)}\left(z\right)\tag{Y1}\\
    &  \label{second}\lambda_{\rho_{\lambda_y\left(z\right)}\left(x\right)}\rho_z\left(y\right)=\rho_{\lambda_{\rho_y\left(x\right)}\left(z\right)}\lambda_x\left(y\right)\tag{Y2}\\
      &\label{third}\rho_z\rho_y(x)=\rho_{\rho_z\left(y\right)}\rho_{\lambda_y\left(z\right)}\left(x\right),\tag{Y3}
  \end{align} 
A solution is \textit{bijective} if $r$ is a bijective map and, in particular, \textit{involutive} if $r^2=\id_{X\times X}$; \emph{idempotent} if $r^2=r$; 
\textit{left non-degenerate} if $\lambda_x \in \Sym_X$, for all $x \in X$; 
\textit{right non-degenerate} if $\rho_x \in \Sym_X$, for all $x \in X$, and \emph{non-degenerate} if it is both left and right non-degenerate. Only recently it has been proven in \cite{JePi25} that all non-degenerate solutions are bijective.
\smallskip

Self-distributive structures, \emph{shelves} for short, appeared in several contexts in mathematics since the late 19th century. For example, they proved to be useful in knot theory, see \cite{ElNe74, FeSaFa04, FeRo92} and in Hopf algebras, \cite{ANDRUSKIEWITSCH}. Moreover, structures of this type, especially racks and quandles, play an important role in the study of various aspects of the Yang--Baxter equation, see for example \cite{Le18, LeVe19}. Other similar approaches have been used for several classes of solutions, for example non-degenerate involutive ones in \cite{BoKiStVo21, Rump05} and idempotent left non-degenerate ones in \cite{StVo21}.



Recall that a \emph{(left) shelf} $(X, \triangleright)$ is a set $X$ equipped with a (left) self-distributive binary operation $\triangleright$ such that
\begin{equation*}
    x \triangleright (y \triangleright z)= (x \triangleright y) \triangleright (x \triangleright z)
\end{equation*} 
 for all $x,y,z\in X$. For arbitrarily $x\in X$ we denote by $L_{x}$ the left translation, that is, the map $L_x:X \to X$ such that $y \mapsto x \triangleright y$. If left translations of the shelf $(X, \triangleright)$ are bijective for all $x \in X$, then it is called a \emph{rack}. Moreover, a rack is a \emph{quandle} if additionally $L_x(x) = x$ for all $x \in X$.

It is well known that if $(X, \triangleright)$ is a shelf, then the map 
$r_\triangleright(x, y)=\left(y, y\triangleright x\right)$
    for all $x, y \in X$, is a left non-degenerate solution on $X$.
    Conversely, for arbitrary left non-degenerate solution $(X, r)$ one can define a shelf $(X, \triangleright_r)$, where $
    x \triangleright_r
    y:= \lambda_x\rho_{\lambda_y^{-1}(x)}(y)$ 
    for all $x, y \in X$, whose associated solution is called the \emph{(left) derived solution} of $(X,r)$. Furthermore, any left non-degenerate solution $\left(X, r\right)$ is bijective if and only if $\left(X, \triangleright_r\right)$ is a rack. 
    Moreover, a left non-degenerate solution can be described in terms of a family of automorphisms of the left shelf associated with it, i.e., the \emph{Drinfel'd twist}, corresponding to the maps $\lambda_x$ and of the shelf operation itself, see \cite[Theorem 2.15]{DoRySt24}.
\medskip

In the present paper, we initiate a systematic study of the class of \emph{quasi bijective} and \emph{quasi non-degenerate} solutions. The main motivation for this study comes from the observation that, on the one hand, these are the generalizations of bijective and non-degenerate solutions, respectively, and on the other hand, solutions originating from dual weak braces \cite{CaMaMiSt22} naturally fall within these classes. Let us emphasize that the latter algebraic structures are generalizations of the celebrated class of skew braces, \cite{GuVe17, Ru07}, that are crucial in the study of arbitrary non-degenerate solutions. 
A solution $(X, r)$ is said to be \emph{quasi bijective} if there exists a (unique) solution $\left(X, r^{-}\right)$ such that
      \begin{align*}
          rr^-r=r, \quad r^-rr^-=r^- \quad \text{and} \quad r^{-}r=rr^{-}.
      \end{align*}
Moreover,  $r$ is \emph{quasi left non-degenerate} if
        there exists a (unique) map $\lambda_x^{-}: X \to X$ such that, for all $x,y \in X$,
   \begin{align*}
       \lambda_x\lambda_x^{-}\lambda_x=\lambda_x, \quad \lambda_x^-\lambda_x\lambda_x^-=\lambda_x^-, \quad \lambda_x^0:=\lambda_x\lambda_x^{-}=\lambda_x^-\lambda_x, \quad\text{and}\quad \lambda_x^0\lambda_y=\lambda_y\lambda_x^0.
    \end{align*}
Similarly, $r$ is \emph{quasi right non-degenerate} if $\rho_x$ admits the above conditions for all $x\in X$.
In addition, $r$ is \emph{quasi non-degenerate} if it is both quasi right and left non-degenerate. 
A natural direction that emerges at this point is to understand these solutions through an analogue of the derived solutions and shelves, in the spirit of the fruitful approach developed for left non-degenerate solutions. It is possible and potentially useful to investigate this class, because although such solutions are not necessarily bijective, certain properties of the defined inverses allow us to extend the theory to this context. Following further this idea, in \cref{sec: quasiracks} we introduce the class of quasi racks, that are shelves whose left multiplications have semigroup inverses (in the underlying transformation monoid) satisfying certain additional properties, similar to $\lambda$ maps of quasi left non-degenerate solutions. 

 A quasi rack~$(X,\triangleright)$ is a \emph{quasi quandle} if left translation maps satisfy $L_x(x)=x$, for all~\hbox{$x\in X$}. In parallel, we develop a generalized notion of derived solutions that provides a deeper understanding of these almost-bijective solutions. 
    To this end, we present three families of quasi racks $(X, \triangleright)$ that give rise to a quasi non-degenerate solution defined as $r_\triangleright(x, y)=\left(L_x^0(y), L_y(x)\right)$. We call this solution the \emph{derived solution} of a shelf. These families are characterized by one of the following properties for all $x, y \in X$: 
\begin{align}\label{ast}
     L^0_{L_x(y)} &= L^0_xL^0_y \tag{$\ast$}\\
         L_y(x)&=L_{L^0_x(y)}(x)  \tag{$\ast\ast$}\label{astast} \\
          L^0_x(x)&=x. \tag{$\ast\ast\ast$} \label{astastast}
     \end{align}
In particular, condition \eqref{astastast} implies \eqref{astast} and also guarantees that the derived solution is quasi bijective. Nevertheless, no other implications hold in general. Moreover in \cref{table: enumeration} we give (using computer) the numbers, up to isomorphism, of all defined structures related to quasi racks of size up to $4$.
Let us also note that the proposed conditions are not enough to identify all quasi racks that yield a derived solution.\\       
As we observe in \cref{sec:Plonka}, quasi racks satisfying \eqref{ast} and \eqref{astastast} can be realized as P{\l}onka sums of racks, consistently with the notion in \cite{Plo67, PloRom92}. Consequently, the derived solution associated to the quasi rack $(X, \triangleright)$ is the strong semilattice (cf.~\cite{CaCoSt21}, which is equivalent to Płonka sums of solutions, cf.~\cite{St25}) of the derived solutions that arise from each rack.\\
In addition, in \cref{Section3} we identify a family of quasi left non-degenerate solutions for which the structure $(X, \triangleright_r)$ is a shelf, where $\triangleright_r$ is the usual binary operation mentioned before. Consistently with \cite[Theorem 2.15]{DoRySt24}, these quasi left non-degenerate solutions can be described in terms of what we call \emph{g-twists}, notions that generalize the known ones of Drinfel'd twists, as we show in \cref{sec:twists}. 


\medskip

\section{Preliminaries}\label{sec:preliminaries}
In this section, we provide some preliminaries that are useful throughout the paper and which, for convenience, we subdivide into two subsections.

\smallskip
\subsection{Basics on weak braces}
To deal with the algebraic structures of weak brace, we introduce  the necessary background on semigroup theory. For details, we refer the reader to some classical books such as \cite{ClPr61, La98, PeRe99}.

A \emph{regular semigroup} is a semigroup $S$ in which every element is regular, i.e., for any $a \in S$ there exists an element $x$ in $S$, called \emph{an inverse} of $a$, such that $axa = a$. In particular, $ax$ is an idempotent of $S$. 
A semigroup $S$ is said to be \emph{completely regular semigroup} if for every $a \in S$ there exists $x \in S$ such that $axa=a$, $xax=x$, and $ax=xa$. In his early work, Clifford \cite{Cl41} called completely regular semigroups \emph{semigroups admitting relative inverses}. A regular semigroup $S$ is called an \emph{inverse semigroup}, if every element $a \in S$ admits a unique inverse. Roughly speaking, such inverses behave similarly to inverses in a group. For example, in any inverse semigroup $S$ we have that  $(xy)^{-}=y^{-} x^{-}$ and $(x^{-})^{-}=x$, for all $x,y \in S$.
An inverse semigroup $S$ is called a \emph{Clifford semigroup} if all idempotents are central, or equivalently, $xx^{-}=x^{-}x$, for all $x \in S$. Clifford semigroups can be also characterized as \emph{strong semilattices} of groups.

\medskip
Let us now recall the definition contained in \cite[Definition 5]{CaMaMiSt22}.
\begin{defin} 
    Let $S$ be a set endowed with two operations $+$ and $\circ$ such that $\left(S,+\right)$ and $\left(S,\circ\right)$ are inverse semigroups. Then, $(S, +, \circ)$ is a \emph{weak (left) brace} if
 \begin{align*}
    x\circ\left(y + z\right)
    = x\circ y -x+ x\circ z\qquad \text{and}\qquad x\circ x^-=-x+x,
 \end{align*}
for all $x,y,z\in S$, where $-x$ and $x^-$ denote the inverses of $x$ with respect to $+$ and $\circ$, respectively.  
\end{defin}

Semigroups $(S, +)$ and $(S, \circ)$ are called the additive and multiplicative semigroup of a weak brace $(S, +, \circ)$, respectively. Clearly, the sets of the idempotents $\E(S,+)$ and $\E(S,\circ)$ coincide, so we simply denote this set by $\E(S)$. In addition, by \cite[Lemma 1.4]{CaMaSt24}, 
\begin{align*}
   \forall\, x\in S, \ e \in \E(S) \qquad  e \circ x=e+x.
\end{align*}
Obviously, if $|\E(S)|=1$, then $(S,+, \circ)$ is a skew brace \cite{GuVe17}. 
Furthermore, in \cite[Theorem 8]{CaMaMiSt22} it is proved that the additive semigroup of any weak brace is necessarily Clifford. A weak brace $(S, +, \circ)$ will be called a \emph{dual weak brace} if also $(S, \circ)$ is Clifford.  \\
Any Clifford semigroup $\left(S, \circ\right)$ determines two trivial dual weak braces, by setting $x + y:= x\circ y$ or $x + y:= y\circ x$, for all $x,y\in S$. In general, if $(S, +, \circ)$ is a weak brace, then the structure $S^{op}:=(S, +^{op}, \circ)$ also is a weak brace, where $x+^{op} y:= y+x$, for all $x, y \in S$. Furthermore, by \cite[Theorem 1]{CaMaSt24}, any dual weak brace can be obtained as a \emph{strong semilattice of skew braces}. 




\medskip

For arbitrary set $S$ we denote by $S^S$ the monoid of all functions $S\rightarrow S$.
As usual, if $(S,+, \circ)$ is a weak brace, consider the maps $\lambda: S\to \End(S,+), \,x\mapsto\lambda_x$ and $\rho: S\to S^S, \,y\mapsto\rho_y$ defined by 
\begin{align*}
   \lambda_x\left(y\right) = -x+x\circ y\qquad \text{and} \qquad \rho_y\left(x\right) = \lambda_x\left(y\right)^{-}\circ x \circ y,
\end{align*}
for all $x,y\in S$, respectively. We have $\lambda_x(y)=x\circ \left(x^- +y \right)$ and $\rho_y(x)=\left(x^-+y\right)^- \circ y$, for all $x,y \in S$. Besides, by \cite[Proposition 7]{CaMaMiSt22}, the map $\lambda$ is a homomorphism of $\left(S,\circ\right)$ into the endomorphisms semigroup of $\left(S,+\right)$ and, by \cite[Proposition 10]{CaMaMiSt22}, the map $\rho$  is an anti-homomorphism of $\left(S,\circ\right)$ into the monoid $S^S$.  For further properties, we refer the reader to \cite{CaMaMiSt22}.

Any weak brace gives rise to a solution, as we recall in the following (see \cite[Theorem 11, Theorem 13]{CaMaMiSt22}).

\begin{theor}\label{teo_quasi_bij}
    Let $S$ be a weak brace and consider the map $r:S\times S\to S\times S$ defined by $r\left(a,b\right)
    = \left(\lambda_a(b), \ \rho_b(a)\right)$, 
for all $a,b\in S$. Then $(S, r)$ is a solution, called the \emph{solution associated with $S$}. Moreover, the following hold:
\begin{align*}
      r\, r^{op}\, r = r, \qquad
      r^{op}\, r\, r^{op} = r^{op}, \qquad \text{and}\qquad rr^{op} = r^{op}r,
  \end{align*}
  where $r^{op}$ is the solution associated with the opposite weak brace $S^{op}$ of $S$.
\end{theor}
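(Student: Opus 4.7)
The plan is to prove this result in two stages, reflecting the two theorems cited from \cite{CaMaMiSt22}. First, I would establish that $(S, r)$ is a solution by verifying conditions \eqref{first}, \eqref{second}, and \eqref{third}; second, I would derive the three identities relating $r$ with $r^{op}$.

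For the first stage, the key tools are the facts recalled just before the statement: $\lambda$ is a semigroup homomorphism $(S, \circ) \to \End(S, +)$ and $\rho$ is an anti-homomorphism $(S, \circ) \to S^S$. Identity \eqref{first} then reduces to showing $\lambda_x \lambda_y = \lambda_{\lambda_x(y) \circ \rho_y(x)}$, which, by the homomorphism property, follows from the computation $\lambda_x(y) \circ \rho_y(x) = \lambda_x(y) \circ \lambda_x(y)^{-} \circ x \circ y$, using the idempotent axiom $a \circ a^{-} = -a + a$ of a weak brace to move the idempotent $\lambda_x(y) \circ \lambda_x(y)^{-}$ past $x\circ y$ through the Clifford property on $(S,\circ)$. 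Dually, \eqref{third} falls out of the anti-homomorphism property of $\rho$ after an analogous idempotent manipulation on the right. Identity \eqref{second} is the most delicate; I would approach it by expanding both sides via $\lambda_x(y) = -x + x \circ y$, $\rho_y(x) = \lambda_x(y)^{-} \circ x \circ y$ and repeatedly applying the brace identity $x \circ (y + z) = x \circ y - x + x \circ z$ together with $\lambda_x \in \End(S,+)$ to bring both expressions to a common normal form.

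For the second stage, note that in the opposite weak brace $S^{op} = (S, +^{op}, \circ)$ the multiplicative operation is unchanged while the additive one is reversed; consequently the associated maps $\lambda^{op}_x$ and $\rho^{op}_y$ can be written in terms of $\lambda_x$, $\rho_y$, and the additive Clifford structure of $S$. The three identities $rr^{op}r=r$, $r^{op}rr^{op}=r^{op}$, and $rr^{op}=r^{op}r$ are then verified componentwise. The guiding observation is that the compositions $\lambda_x \lambda^{op}_x$ and $\rho_x \rho^{op}_x$ should collapse to maps of the form $\lambda_e$ and $\rho_e$ for suitable idempotents $e \in \E(S)$, and that such idempotent maps are central in the relevant monoids of transformations. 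This centrality, combined with the fact that $\E(S,+) = \E(S,\circ)$, is exactly what is needed to push the pairs $rr^{op}$ through $r$ in either order.

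The main obstacle is the careful bookkeeping required for \eqref{second} and for translating $r^{op}$ back into the original structure: the interplay between the additive inverses $-x$, multiplicative inverses $x^{-}$, and the idempotents $x \circ x^{-} = -x + x$ demands repeated use of the Clifford property to move idempotents past other elements. Once a systematic procedure is in place for rewriting expressions modulo the idempotent kernel (and using that $e \circ x = e + x$ for $e \in \E(S)$, as recalled in the preliminaries), the remaining equalities become term-by-term comparisons, and the quasi bijectivity conditions fall out as formal consequences of the structural facts about dual-type behaviour of $(+,\circ)$ versus $(+^{op},\circ)$.
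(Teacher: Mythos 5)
The paper does not actually prove this statement: it is recalled verbatim from \cite[Theorems 11 and 13]{CaMaMiSt22}, so there is no internal proof to match your argument against. Judged on its own terms, your outline follows the natural route (verify \eqref{first}--\eqref{third} via the homomorphism property of $\lambda$ and the anti-homomorphism property of $\rho$, then compute $r^{op}$ componentwise), but it contains one step that would fail as stated. To reduce \eqref{first} to $\lambda_{x\circ y}=\lambda_{\lambda_x(y)\circ\rho_y(x)}$ you propose to ``move the idempotent $\lambda_x(y)\circ\lambda_x(y)^-$ past $x\circ y$ through the Clifford property on $(S,\circ)$.'' The theorem is stated for an \emph{arbitrary} weak brace, where only $(S,+)$ is guaranteed to be Clifford; $(S,\circ)$ is merely an inverse semigroup, and its idempotents are central only in the dual case. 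So you cannot commute $e=\lambda_x(y)\circ\lambda_x(y)^-$ past $x\circ y$ inside $(S,\circ)$. The identity is still true, but the legitimate route is different: write $\lambda_{e\circ x\circ y}=\lambda_e\lambda_{x\circ y}$, use $e\circ z=e+z$ for $e\in\E(S)$ (recalled in the preliminaries) to get $\lambda_e(z)=e+z$, and then absorb $e$ using centrality of idempotents in the additive Clifford semigroup together with $\lambda_x\in\End(S,+)$. The same caveat applies to your Stage 2 claim that $\lambda_e$, $\rho_e$ are ``central in the relevant monoids of transformations'': this is true, but it must be derived from the additive Clifford structure, not from any commutativity in $(S,\circ)$.

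Beyond that, the proposal is a plan rather than a proof: the verification of \eqref{second}, which you yourself identify as the delicate part, is described only as ``bring both expressions to a common normal form,'' and the three identities $rr^{op}r=r$, $r^{op}rr^{op}=r^{op}$, $rr^{op}=r^{op}r$ are asserted to ``fall out'' without the explicit computation of $\lambda^{op}_x$ and $\rho^{op}_y$ in terms of $\lambda_x$, $\rho_y$ and the idempotents. These are exactly the places where the bookkeeping with $-x$, $x^-$ and $x\circ x^-=-x+x$ can go wrong, so as written the argument is not yet a proof of the statement.
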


The result in \cref{teo_quasi_bij} establishes that the solution $r$ associated with a dual weak brace is close to being bijective.  In particular, if $S$ is a skew brace, the solution $r$ is non-degenerate and bijective with $r^{-1} = r^{op}$. 
Similarly, the next result shows that the solution coming from a dual weak brace is also close to being non-degenerate.

\begin{prop}\label{unique_lam_rho}
    Let $(S, +, \circ)$ be a dual weak brace. Then the sets
    \begin{align*}
        \Lambda(S)=\{\lambda_x \, \mid \, x \in S\}
\qquad \text{and} \qquad \varrho (S)=\{\rho_x \, \mid\,  x \in S\}   \end{align*}
    are Clifford subsemigroups of the monoid $S^S$.
    \begin{proof}
        Let $x, y \in S$. Then, by \cite[Proposition 7]{CaMaMiSt22}, $\lambda_x\lambda_y=\lambda_{x \circ y} \in  \Lambda(S)$ and so $\Lambda(S)$ is a subsemigroup of $S^S$. Moreover, $\lambda_x\lambda_{x^-}\lambda_x=\lambda_{x \circ x^-\circ x}=\lambda_x$. In addition, we have that $\E(\Lambda(S))=\{\lambda_e \, \mid \, e\in \E(S)\}$. In fact, if $\lambda_a \in \E(\Lambda(S))$, then $\lambda_{a^-}\lambda_a\lambda_a=\lambda_{a^-}\lambda_a$ which implies $\lambda_a=\lambda_{a \circ a^-}$. 
        Moreover, for all $a \in S$,
        \begin{align*}
            \lambda_x\lambda_{x^-}\lambda_a=\lambda_{x \circ x^- \circ a}=\lambda_{a \circ x \circ x^-}=\lambda_a\lambda_{x}\lambda_{x^-}
        \end{align*}
Similarly, one can prove the claim for the set $\varrho (S)$ by using \cite[Proposition 10]{CaMaMiSt22}.
    \end{proof}
\end{prop}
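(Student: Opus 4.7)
The plan is to verify the three defining features of a Clifford semigroup for $\Lambda(S)$ — namely (i) closure under composition, (ii) complete regularity with a commuting inverse lying again in $\Lambda(S)$, and (iii) centrality of idempotents — and then deduce the case of $\varrho(S)$ by a symmetric argument, replacing the homomorphism $\lambda$ by the anti-homomorphism $\rho$.

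For (i) and (ii), the key input is that $\lambda\colon (S,\circ)\to \End(S,+)$ is a semigroup homomorphism (by \cite[Proposition 7]{CaMaMiSt22}), so $\lambda_x\lambda_y=\lambda_{x\circ y}$ for all $x,y\in S$; this already gives that $\Lambda(S)$ is a subsemigroup of $S^S$. Since $S$ is a dual weak brace, $(S,\circ)$ is Clifford, so every $x\in S$ satisfies $x\circ x^-\circ x=x$, $x^-\circ x\circ x^-=x^-$, and $x\circ x^-=x^-\circ x$. Applying $\lambda$ transports these three identities verbatim onto $\lambda_x$ and $\lambda_{x^-}$, giving that $\lambda_{x^-}$ is a commuting semigroup inverse of $\lambda_x$ inside $\Lambda(S)$; in particular each $\lambda_x$ is completely regular.

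For (iii), I will first show $\E(\Lambda(S))=\{\lambda_e\mid e\in \E(S)\}$. The inclusion $\supseteq$ is trivial from $\lambda_e\lambda_e=\lambda_{e\circ e}=\lambda_e$. For $\subseteq$, from $\lambda_a^2=\lambda_a$ I would premultiply by $\lambda_{a^-}$ to obtain $\lambda_{a^-\circ a\circ a}=\lambda_{a^-\circ a}$; using the Clifford identities $a^-\circ a=a\circ a^-$ and $a\circ a^-\circ a=a$, this collapses to $\lambda_a=\lambda_{a\circ a^-}$, with $a\circ a^-\in\E(S)$. Centrality then follows because the idempotents of the Clifford semigroup $(S,\circ)$ are central: for any $e\in\E(S)$ and $x\in S$ we have $e\circ x=x\circ e$, hence $\lambda_e\lambda_x=\lambda_{e\circ x}=\lambda_{x\circ e}=\lambda_x\lambda_e$, so every idempotent of $\Lambda(S)$ commutes with everything, completing the Clifford check.

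For $\varrho(S)$ the argument is formally identical, with the only change that $\rho_y\rho_x=\rho_{x\circ y}$ by \cite[Proposition 10]{CaMaMiSt22}; the three Clifford identities still transfer after a harmless reindexing, and the centrality step is unchanged since it only uses that idempotents of $(S,\circ)$ are central. I do not expect any serious obstacle here: the whole proof is essentially a transfer of the Clifford identities of $(S,\circ)$ through the (anti)homomorphism $\lambda$ (resp.\ $\rho$), with the mildest subtlety being the identification of $\E(\Lambda(S))$ with the $\lambda$-image of $\E(S)$.
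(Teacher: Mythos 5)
Your proposal is correct and follows essentially the same route as the paper's proof: closure and regularity via the (anti)homomorphism property $\lambda_x\lambda_y=\lambda_{x\circ y}$, identification of $\E(\Lambda(S))$ with $\{\lambda_e \mid e\in\E(S)\}$ by the same premultiplication trick, and centrality of idempotents transferred from the Clifford semigroup $(S,\circ)$. The only differences are cosmetic (you write the central idempotent as $\lambda_e$ where the paper uses $\lambda_{x}\lambda_{x^-}$, and you make the commuting-inverse identity explicit).
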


\smallskip

From Proposition 2.7 in \cite{GRAY20084801} we get the following immediate consequence.

\begin{cor}\label{cor:Gray-Mitchell}
If $(S, +, \circ)$ is a finite dual weak brace, then $\lambda_{x_{\mid\im(\lambda_x)}}$ and $\rho_{x_{\mid\im(\rho_x)}}$ are bijective for all $x\in S$. 

\end{cor}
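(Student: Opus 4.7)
The plan is to observe that this corollary is essentially immediate from the Clifford structure of $\Lambda(S)$ and $\varrho(S)$ established in \cref{unique_lam_rho}, together with the general fact that a completely regular element of a finite transformation monoid restricts to a bijection on its image. One can either invoke the cited Proposition~2.7 of \cite{GRAY20084801} directly, or give a short self-contained derivation; I would present both for clarity, since the Clifford structure has just been proved.

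For the self-contained route, I would fix $x\in S$ and write $f:=\lambda_x$. By \cref{unique_lam_rho}, $\Lambda(S)$ is a Clifford subsemigroup of $S^S$, so $f$ admits a (unique) semigroup inverse $f^-\in \Lambda(S)$ with
\[
ff^-f=f,\qquad f^-ff^-=f^-,\qquad ff^-=f^-f.
\]
Setting $e:=ff^-=f^-f$, the key step is to note that $e$ fixes $\im(f)$ pointwise: if $y=f(z)\in\im(f)$, then $e(y)=ff^-f(z)=f(z)=y$.

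Injectivity of $f\big|_{\im(f)}$ now follows in one line: given $y_1,y_2\in\im(f)$ with $f(y_1)=f(y_2)$, applying $f^-$ yields $e(y_1)=e(y_2)$, hence $y_1=y_2$. Since $f$ maps $\im(f)$ into itself and $\im(f)$ is finite (as $S$ is), this injective self-map is automatically bijective. The argument for $\rho_x$ is verbatim the same with $\varrho(S)$ in place of $\Lambda(S)$. I do not foresee any genuine obstacle: the Clifford/commuting-inverse property has already been secured in \cref{unique_lam_rho}, and the finiteness hypothesis of the corollary is used only at the very last step to promote injectivity on the image to bijectivity.
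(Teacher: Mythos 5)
Your proposal is correct. The paper itself gives no argument at all: it simply declares the corollary an immediate consequence of Proposition~2.7 of \cite{GRAY20084801}, which is a general statement about (completely) regular elements of finite transformation monoids. Your self-contained derivation is therefore a genuinely different (and more informative) route: you extract exactly what is needed from \cref{unique_lam_rho}, namely a commuting inverse $f^-$ of $f=\lambda_x$, observe that the idempotent $e=ff^-=f^-f$ fixes $\im(f)$ pointwise, and conclude injectivity of $f\big|_{\im(f)}$ by applying $f^-$; finiteness then upgrades this to bijectivity. All steps check out. One small remark: the finiteness hypothesis is not actually needed even for the last step, since the same identity gives surjectivity directly --- for $y=f(w)\in\im(f)$ one has $y=e(y)=f\bigl(f^-(y)\bigr)$ and $f^-(y)=f^-f(w)=e(w)=f\bigl(f^-(w)\bigr)\in\im(f)$ --- so your argument in fact proves the statement for arbitrary dual weak braces, whereas the citation route is tied to the finite setting. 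Either way, what you wrote suffices for the corollary as stated.
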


\medskip

\subsection{Basics on shelves}

We recall some basics on the algebraic structures of shelves and racks. For further details, see for instance \cite{FeSaFa04, FeRo92,  Le18, LeVe19}.

\smallskip

\begin{defin}
A \emph{(left) shelf} $(X, \triangleright)$ is a set $X$ equipped with a left self-distributive binary operation~$\triangleright$ such that
\begin{equation}\label{self_distri}
    x \triangleright (y \triangleright z)= (x \triangleright y) \triangleright (x \triangleright z), 
\end{equation} 
 for all $x,y,z\in X$.
\end{defin}
If we denote with $L_x:X \to X, y \mapsto x \triangleright y$ the operator of left multiplication by $x$, for all $x \in X$, then equation \eqref{self_distri} is equivalent to require that $L_x$ be a shelf homomorphism. Clearly, one can define right racks and denote by $R_x$ the right multiplications.

\begin{defin}
A shelf $(X, \triangleright)$ is a \emph{rack} if all maps $L_x$ are bijective. Moreover, a rack $(X, \triangleright)$ is a \emph{quandle} if, in addition, $L_x(x) = x$, for all $x \in X$.
\end{defin}

\smallskip

The following known results highlight important connections between shelves and left non-degenerate solutions.

\begin{prop}\label{YBE_and_shelf}
    Let $(X, \triangleright)$ be a shelf.
    Then, the map 
$r_\triangleright(x, y)=\left(y, y\triangleright x\right)$,
    for all $x, y \in X$, is a left non-degenerate solution on $X$.
    Conversely, if $(X, r)$ is a left non-degenerate solution, set
    \begin{align*}
    x \triangleright_r
    y:= \lambda_x\rho_{\lambda_y^{-1}(x)}(y),
     \end{align*}
    for all $x, y \in X$, then the structure $(X, \triangleright_r)$ is a shelf.
\end{prop}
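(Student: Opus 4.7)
\medskip
\noindent\textbf{Plan of proof.}

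\smallskip
\noindent\emph{Part 1.} For the map $r_\triangleright(x,y)=(y,\,y\triangleright x)$ one reads off $\lambda_x=\id_X$ and $\rho_y(x)=y\triangleright x$. Since each $\lambda_x$ is a bijection, left non-degeneracy is immediate, so only conditions \eqref{first}--\eqref{third} remain. I would substitute $\lambda_x=\id_X$ directly and observe the following:
\begin{itemize}
\item \eqref{first} collapses to the tautology $z=z$, since both sides equal $z$;
\item \eqref{second} collapses to $\rho_z(y)=\rho_z(y)$ after noting that $\lambda_a=\id_X$ erases every subscripted $\lambda$;
\item \eqref{third} becomes $z\triangleright(y\triangleright x)=(z\triangleright y)\triangleright(z\triangleright x)$, which is exactly self-distributivity of $\triangleright$.
\end{itemize}
So Part 1 is essentially a translation, with the content of the YBE concentrated entirely in \eqref{third}.

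\smallskip
\noindent\emph{Part 2.} Write $\sigma_x:=\lambda_x\rho_{\lambda_x^{-1}(x)}$ informally and, more precisely, set $\sigma_x(y):=x\triangleright_r y=\lambda_x\rho_{\lambda_y^{-1}(x)}(y)$. The goal is the self-distributive identity
\[
x\triangleright_r(y\triangleright_r z)=(x\triangleright_r y)\triangleright_r(x\triangleright_r z),
\]
i.e.\ $\sigma_x\sigma_y(z)=\sigma_{\sigma_x(y)}\sigma_x(z)$ for all $x,y,z\in X$. My plan is to massage each side by repeatedly using \eqref{first}--\eqref{third} combined with the left non-degeneracy identity $\lambda_x^{-1}$, which lets one pass the argument of $\rho$ through $\lambda$-maps freely.

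The first step is an auxiliary rewriting: from \eqref{first}, $\lambda_x\lambda_y=\lambda_{\lambda_x(y)}\lambda_{\rho_y(x)}$, one derives a ``dual'' form $\lambda_b^{-1}\lambda_a^{-1}\lambda_{\lambda_a(b)}=\lambda_{\rho_b(a)}$, obtained by relabelling and inverting; this is the standard tool for manipulating $\lambda^{-1}$ inside compositions. Using it on the left-hand side $\lambda_x\rho_{\lambda_y^{-1}(x)}\lambda_y\rho_{\lambda_z^{-1}(y)}(z)$, I would push the inner $\lambda_y$ outward and absorb it into the surrounding maps. On the right-hand side, $\sigma_{\sigma_x(y)}(w)=\lambda_{\sigma_x(y)}\rho_{\lambda_w^{-1}(\sigma_x(y))}(w)$ with $w=\sigma_x(z)$, one expands and uses \eqref{first} and \eqref{second} to match the outer $\lambda$-factor; then \eqref{third} handles the $\rho$-tail.

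\smallskip
\noindent\emph{Main obstacle.} The technical heart is aligning the subscripts inside the $\rho$-maps on both sides. In particular, after eliminating $\lambda$'s, one is reduced to an identity of the form
\[
\rho_{\lambda_y^{-1}(x)}\,\rho_{\lambda_z^{-1}(y)}(z)\;=\;\rho_{u}\,\rho_{v}(z)
\]
for explicit $u,v$ built from $x,y,z$ via $\lambda$'s and $\rho$'s, and the match of $u,v$ with the right-hand side requires a precise application of \eqref{second} to rewrite the outer subscript, and of \eqref{third} to collapse the remaining composition of $\rho$'s. I would keep the bookkeeping under control by introducing shorthand $\bar x:=\lambda_y^{-1}(x)$, $\bar y:=\lambda_z^{-1}(y)$ and performing the substitution step by step rather than in a single move. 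Once both sides are reduced to the common expression $\lambda_x\rho_{\lambda_z^{-1}(\rho_y(x))}\rho_{\lambda_z^{-1}(y)}(z)$ (or an equivalent normal form), the equality follows and self-distributivity is established.
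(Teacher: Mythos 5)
Your Part~1 is correct and complete: for $r_\triangleright(x,y)=(y,\,y\triangleright x)$ one has $\lambda_x=\id_X$ and $\rho_y=L_y$, so \eqref{first} and \eqref{second} become tautologies and \eqref{third} is precisely left self-distributivity; this is the standard verification (the paper itself recalls the proposition as a known result and supplies no proof of its own, so there is nothing to compare against there).

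The converse direction, however, is where the content lies, and your treatment of it has a genuine gap. First, the one concrete identity you actually write down is wrong: inverting \eqref{first} gives $\lambda_b^{-1}\lambda_a^{-1}\lambda_{\lambda_a(b)}=\lambda_{\rho_b(a)}^{-1}$, not $\lambda_{\rho_b(a)}$. Since this ``dual form'' is the tool you propose for passing $\lambda^{-1}$'s through the $\rho$-subscripts, the missing inverse would derail the subsequent rewriting. Second, and more seriously, the step you yourself identify as ``the technical heart'' --- aligning the subscripts $u,v$ and collapsing the composition of $\rho$'s via \eqref{second} and \eqref{third} --- is never carried out: the claimed common normal form $\lambda_x\rho_{\lambda_z^{-1}(\rho_y(x))}\rho_{\lambda_z^{-1}(y)}(z)$ is asserted rather than derived, and it is not evident that either side of the self-distributivity identity reduces to it. As it stands, Part~2 is a plan, not a proof. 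To close the gap, either perform the full subscript-chasing computation, or take the cleaner standard route: check that the bijection $J(x,y)=(x,\lambda_x(y))$ satisfies $J\,r = s\,J$ where $s(x,y)=(y,\,y\triangleright_r x)$, and that its three-variable analogue $(x,y,z)\mapsto(x,\lambda_x(y),\lambda_x\lambda_y(z))$ intertwines the two sides of \eqref{YBE} for $r$ and for $s$ (this uses \eqref{first} and \eqref{second}); then $s$ is a solution of the special form handled in your Part~1, and self-distributivity of $\triangleright_r$ follows from \eqref{third} applied to $s$ exactly as in Part~1.
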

We recall that the solution associated with the shelf $\left(X, \triangleright_r\right)$ is called \emph{(left) derived solution} of $(X,r)$.

\smallskip

\begin{theor}[Definition 2.14 - Theorem 2.15, \cite{DoRySt24}]\label{th-lns}
Let $\left(X,\,\triangleright\right)$ be a shelf and consider a map $\varphi:X \to \mathrm{Sym}_X, x\mapsto \varphi_x$. Then, the map 
$r_\varphi:X\times X\to X\times X$ defined by
    \begin{align*}
        r_\varphi\left(x, y\right)  
        = \big(\varphi_x\left(y\right),\, \varphi^{-1}_{\varphi_x\left(y\right)}\left(\varphi_x\left(y\right)\triangleright x
\right)\big),  
    \end{align*}
    for all $x,y\in X$, is a solution if and only if $\varphi$ is a \emph{twist}, namely, $\varphi_x \in \Aut(X, \triangleright)$ and
    \begin{align*}
        \varphi_x\varphi_y = \varphi_{\varphi_x\left(y\right)}\varphi_{\varphi^{-1}_{\varphi_x\left(y\right)}L_{\varphi_x\left(y\right)}\left(x\right)},
    \end{align*}
    for all $x, y \in X$.
Moreover, any left non-degenerate solution can be obtained in this way.
\end{theor}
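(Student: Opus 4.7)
My plan is to verify the three Yang--Baxter relations (Y1), (Y2), (Y3) for $r_\varphi$ separately, identifying the exact algebraic conditions on $\varphi$ and $\triangleright$ that each one forces. Throughout, I denote $\lambda_x=\varphi_x$ and $\rho_y(x)=\varphi_{\varphi_x(y)}^{-1}L_{\varphi_x(y)}(x)$ as read off from the definition of $r_\varphi$.

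First, a direct substitution of these formulas into (Y1) produces verbatim the twist identity $\varphi_x\varphi_y=\varphi_{\varphi_x(y)}\varphi_{\varphi_{\varphi_x(y)}^{-1}L_{\varphi_x(y)}(x)}$. Hence (Y1) is equivalent to the displayed twist identity for any shelf $(X,\triangleright)$ and any map $\varphi:X\to\Sym_X$, with no further hypothesis needed.

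Second and most delicate, I would analyse (Y3). Recall that for the derived solution $s(x,y)=(y,y\triangleright x)$ (corresponding to $\varphi_x=\id$) the identity (Y3) reduces precisely to the self-distributivity of $\triangleright$. For the general $r_\varphi$, expanding (Y3) with the twist formulas and using (Y1) to re-index the $\varphi$-maps produces an equation involving nested applications of $\triangleright$ inside $\varphi$-maps on both sides. Comparison of the two sides, after choosing witnesses that isolate the innermost $\triangleright$-expression and exploiting the bijectivity of each $\varphi_x$, forces the automorphism property $\varphi_x(a\triangleright b)=\varphi_x(a)\triangleright\varphi_x(b)$. Conversely, assuming $\varphi_x\in\Aut(X,\triangleright)$ together with the twist identity and self-distributivity, a direct calculation recovers (Y3). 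I expect this interplay between (Y1), (Y3), the shelf axiom, and the automorphism property to be the main obstacle, since the argument must simultaneously track the twisted indices, the $\Aut$-condition, and the self-distributive identity without collapse.

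Third, (Y2) can be checked by an analogous direct computation built on the twist identity, the automorphism property, and self-distributivity; no new algebraic condition on $\varphi$ or $\triangleright$ arises. Finally, for the ``moreover'' statement, given a left non-degenerate solution $(X,r)$ I take $\triangleright:=\triangleright_r$, which is a shelf by Proposition~\ref{YBE_and_shelf}, and set $\varphi_x:=\lambda_x$. Applying the definition $a\triangleright_r b=\lambda_a\rho_{\lambda_b^{-1}(a)}(b)$ with $a=\lambda_x(y)$ and $b=x$ gives $\lambda_x(y)\triangleright_r x = \lambda_{\lambda_x(y)}\rho_y(x)$, whence $\rho_y(x)=\lambda_{\lambda_x(y)}^{-1}(\lambda_x(y)\triangleright_r x)$ and therefore $r=r_\varphi$. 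The equivalence established in the first part then ensures that this $\varphi$ is automatically a twist, closing the argument.
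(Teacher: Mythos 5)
Your overall skeleton is right --- (Y1) is verbatim the twist identity, and the ``moreover'' part via $\triangleright_r$ and $\varphi_x=\lambda_x$ is exactly the standard argument --- but the central step of your forward direction is misassigned and, as stated, cannot be carried out. You claim that \eqref{third} (together with \eqref{first}) forces $\varphi_x\in\Aut(X,\triangleright)$ and that \eqref{second} then contributes ``no new algebraic condition''. This is backwards. Given \eqref{first}, condition \eqref{second} reduces (after cancelling the common outer bijection $\varphi^{-1}_{\varphi_x\varphi_y(z)}$ on both sides) to $\varphi_x\bigl(\varphi_y(z)\triangleright y\bigr)=\varphi_x\varphi_y(z)\triangleright\varphi_x(y)$, and surjectivity of $\varphi_y$ turns this into precisely the homomorphism property of $\varphi_x$; this is where the automorphism condition lives. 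Condition \eqref{third}, given \eqref{first} and the automorphism property, then reduces to the self-distributivity \eqref{self_distri}, which is already assumed. This is also how the paper proceeds in its generalized version (\cref{le:lndsol}): the homomorphism property is extracted from the combination of \eqref{first} and \eqref{second} --- see the identity \eqref{eq:Y1+Y2} and the computation following it --- while \eqref{third} is the relation whose verification consumes \eqref{self_distri}.

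The misassignment is not cosmetic: \eqref{first} and \eqref{third} together genuinely do not imply $\varphi_x\in\Aut(X,\triangleright)$, so the ``choice of witnesses'' you invoke cannot exist. Concretely, take $X=\{1,2,3\}$ with the shelf $x\triangleright y:=f(y)$, where $f(1)=f(2)=1$ and $f(3)=3$ (self-distributive since $f$ is idempotent), and let $\varphi_x:=g$ for all $x$, with $g$ the $3$-cycle $(1\,2\,3)$. Then $\rho_y=g^{-1}f$ for every $y$, so \eqref{first} reads $g^2=g^2$ and \eqref{third} reads $(g^{-1}f)^2=(g^{-1}f)^2$; both hold. But $gf\neq fg$, so $g\notin\Aut(X,\triangleright)$, and indeed \eqref{second} fails because $f\neq g^{-1}fg$. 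To repair the argument you must extract the automorphism property from \eqref{second} and then verify \eqref{third} using \eqref{first}, the automorphism property, and self-distributivity --- i.e., swap the roles you assigned to the last two Yang--Baxter relations.
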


Let us also recall from \cite{DoRySt24} one of the 
consequences of this theorem.
\begin{prop}\label{bijective=rack}
    Any left non-degenerate solution $\left(X, r\right)$ is bijective if and only if $\left(X, \triangleright_r\right)$ is a rack. 
\end{prop}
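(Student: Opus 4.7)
The plan is to reduce bijectivity of $r$ directly to bijectivity of the left translations $L_y$ of the derived shelf $(X,\triangleright_r)$, exploiting the fact that every $\lambda_x$ is already bijective. The main idea is a single well-chosen change of coordinates on $X\times X$ that conjugates $r$ into the derived solution $r_{\triangleright_r}$ of $(X,\triangleright_r)$, so the equivalence becomes almost tautological.

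Concretely, I would introduce the two bijections
\begin{align*}
\psi(x,y) = \bigl(x,\lambda_x^{-1}(y)\bigr), \qquad \Phi(x,y) = \bigl(x,\lambda_x(y)\bigr),
\end{align*}
of $X\times X$, both well defined by left non-degeneracy. Unpacking the definition $L_y(x) = y\triangleright_r x = \lambda_y\rho_{\lambda_x^{-1}(y)}(x)$, a direct computation gives
\begin{align*}
\Phi\circ r\circ\psi(x,y)
= \Phi\bigl(y,\rho_{\lambda_x^{-1}(y)}(x)\bigr)
= \bigl(y,\lambda_y\rho_{\lambda_x^{-1}(y)}(x)\bigr)
= \bigl(y,L_y(x)\bigr),
\end{align*}
which is precisely the derived solution $r_{\triangleright_r}$ of the shelf $(X,\triangleright_r)$, as described in \cref{YBE_and_shelf}. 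Since $\Phi$ and $\psi$ are bijective, $r$ is bijective if and only if $r_{\triangleright_r}$ is bijective. To close the argument, I would observe that for any shelf $(X,\triangleright)$ the map $r_\triangleright(x,y)=(y,y\triangleright x)$ is bijective if and only if every $L_y$ is bijective: given $(u,v)\in X\times X$, the first coordinate of the output pins down $y=u$, after which the equation $L_u(x)=v$ must be uniquely solvable in $x$ for each $v$, which is exactly the rack condition on $(X,\triangleright_r)$.

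I do not expect a serious obstacle here. The only real content is finding the right conjugation on $X\times X$ that exhibits $r$ as the derived solution of its associated shelf up to fibrewise bijections; once $\psi$ and $\Phi$ are in hand, the rest is bookkeeping with the defining formulas of $\triangleright_r$, $\lambda$, and $\rho$, together with the elementary characterisation of when a map of the form $(x,y)\mapsto(y,L_y(x))$ is bijective.
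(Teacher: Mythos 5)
Your proof is correct. The computation $\Phi\circ r\circ\psi(x,y)=(y,\lambda_y\rho_{\lambda_x^{-1}(y)}(x))=(y,y\triangleright_r x)$ checks out, $\psi=\Phi^{-1}$ are genuine bijections of $X\times X$ by left non-degeneracy, and the final observation that $(x,y)\mapsto(y,L_y(x))$ is bijective precisely when every $L_y$ is bijective is elementary and right; combined with \cref{YBE_and_shelf} (which guarantees $(X,\triangleright_r)$ is a shelf, so that ``rack'' is the correct remaining condition), this closes both directions. Note that the paper itself does not prove this proposition: it is recalled from \cite{DoRySt24}, where it appears as a consequence of the twist description in \cref{th-lns}, i.e.\ of writing $r$ as $r_\varphi(x,y)=(\varphi_x(y),\varphi^{-1}_{\varphi_x(y)}(\varphi_x(y)\triangleright x))$ with $\varphi_x=\lambda_x$. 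Your conjugation by the fibrewise bijection $\Phi(x,y)=(x,\lambda_x(y))$ is exactly the mechanism underlying that description --- it exhibits $r$ and its derived solution as conjugate maps --- so the two routes are essentially the same; your version has the advantage of being self-contained and of isolating the one identity that matters, at the cost of not recovering the full structural statement of \cref{th-lns}.
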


Moreover, we also recall that, if $X$ is a finite set, a left non-degenerate solution $\left(X, r\right)$ is right non-degenerate and bijective if and only if $\left(X, \triangleright_r\right)$ is a rack. For more details, we refer the reader to Corollaries 2.16 and 2.17 in \cite{DoRySt24}.

 \medskip

\section{Quasi racks and quasi bijective non-degenerate solutions}\label{sec: quasiracks}
In this section, we introduce the notions of quasi racks and quasi non-degenerate solutions, with the aim to generalize the notions of the previous section. In particular, we identify three families of quasi racks that produce a quasi non-degenerate derived solution.

\medskip

Let us introduce the following definition.

\begin{defin}\label{defn-quasi}
      A solution $(X, r)$ is said to be \emph{quasi bijective} if there exists a (unique) solution $\left(X, r^{-}\right)$ such that
      \begin{align*}
          rr^-r=r, \quad r^-rr^-=r^- \quad \text{and} \quad r^{-}r=rr^{-}.
      \end{align*}
\end{defin}

\begin{rem}\label{r-}
If $(X, r)$ is a solution which admits a relative inverse $r^{-}$, then $(X, r^{-})$ is also a solution. Indeed, it is straightforward to check that 
$(r\times \id_{X})^{-} = r^{-}\times \id_{X}$ and~$( \id_{X}\times r)^{-} = \id_{X}\times r^{-}$. Moreover, observe that $(ab)^{-} = b^{-}a^{-}$, for arbitrary $a, b$ with relative inverses $a^{-}$ and $b^{-}$, respectively. Taking the inverses of both sides of the \eqref{YBE}, 
we thus get $(r^{-}\times \id_{X})(\id_{X} \times r^{-})(r^{-}\times \id_{X}) = (\id_{X} \times r^{-})(r^{-}\times \id_{X})(\id_{X} \times r^{-})$ and the assertion follows. 
\end{rem}

\smallskip

\begin{defin}\label{defn-left-right-quasi}
Let $(X, r)$ be a solution. Then $r$ is said to be:
    \begin{enumerate}
        \item[1)] \emph{quasi left non-degenerate} if
        there exists a (unique) map $\lambda_x^{-}: X \to X$ such that, for all $x,y \in X$,
   \begin{align*}
       \lambda_x\lambda_x^{-}\lambda_x=\lambda_x, \quad \lambda_x^-\lambda_x\lambda_x^-=\lambda_x^-, \quad \lambda_x^0:=\lambda_x\lambda_x^{-}=\lambda_x^-\lambda_x, \quad\text{and}\quad \lambda_x^0\lambda_y=\lambda_y\lambda_x^0;
    \end{align*}
    \item[2)] \emph{quasi right non-degenerate} if 
    there exists a (unique) map $\rho_x^{-}: X \to X$ such that, for all $x,y \in X$,
    \begin{align*}
       \rho_x\rho_x^{-}\rho_x=\rho_x, \quad \rho_x^-\rho_x\rho_x^-=\rho_x^-, \quad  \rho_x^0:=\rho_x\rho_x^{-}=\rho_x^-\rho_x, \quad\text{and}\quad \rho_x^0\rho_y=\rho_y\rho_x^0;
    \end{align*}
    \item[3)] \emph{quasi non-degenerate} if $r$ is both quasi right and left non-degenerate.
    \end{enumerate}
\end{defin}

\smallskip

\begin{rem}Observe that the condition $\lambda_{x}^{0}\lambda_{y} = \lambda_{y}\lambda_{x}^{0}$ for arbitrary $x, y\in X$ does not follow from the existence of the inverse in the sense of the \cref{defn-left-right-quasi}. Indeed, consider a set $X$ with $|X|\geq 1$ and let $(X, r)$ be the solution given by $r(x,y)=(x, x)$, for all $x, y \in X$. Then the maps $\lambda_x$ have the inverses in the sense of our definition, but, if $x \neq y$ it is not true that $\lambda_x^0\lambda_y \neq \lambda_y\lambda_x^0$.
\end{rem}

\smallskip

 We will refer to maps $r^{-}$, $\lambda_{x^-}$, and $\rho_{x^-}$ as \emph{relative inverses} of maps $r$, $\lambda_x$, and $\rho_x$, respectively, for all $x \in X$. Observe that these relative inverses, if they exist, are all unique 
 (see  \cite[Corollary~4, p.~275]{Miller-Clifford}). For the sake of completeness, we outline the proof.

 \begin{lemma}\label{uniqueness}
For arbitrary semigroup $S$ and $a\in S$, if there exists $x \in S$ such that $axa=a$, $xax = x$ and $xa = ax$, then $x$ is uniquely determined. 
 \end{lemma}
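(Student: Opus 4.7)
The plan is to derive two identities, $x = ayx$ and $ayx = y$, whose combination immediately gives $x = y$. Both will rest on the easy ``telescoping'' auxiliaries $a^{2}x = a$, $a^{2}y = a$, and $y^{2}a = y$, each obtained in a single step from the hypotheses; for instance $a^{2}x = a(ax) = a(xa) = (ax)a = axa = a$, using only $ax = xa$ and $axa = a$, and analogously for the other two via $ay = ya$ and $yay = y$.

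To prove $ayx = y$, I would start from the tautology $a^{2}x = a^{2}y$ (both sides equal $a$) and pre-multiply by $y^{2}$. Telescoping the left factor via $y^{2}a = y$ gives $y^{2}a^{2} = (y^{2}a)a = ya$, so the equation becomes $yax = yay$; the left-hand side simplifies via $ya = ay$ to $ayx$ and the right-hand side via $yay = y$ to $y$.

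To prove $x = ayx$, I would substitute $a = aya$ into $x = xax$ and regroup associatively as $x = (xa)(y)(ax)$; the commutation $xa = ax$ applied at both ends turns this into $axyax$. Since $a$ commutes with both $x$ and $y$, it commutes with every word in $x,y$, so both $a$'s can be migrated to the left to give $axyax = a^{2}xyx = (a^{2}x)(yx) = ayx$ by the telescoping identity.

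The main subtlety is that the hypotheses on $x$ and on $y$ are perfectly symmetric, so any symmetric manipulation is doomed to produce a tautology; the asymmetry must be injected by hand. Using $a = aya$ (rather than $a = axa$) in the first derivation and pre-multiplying by $y^{2}$ (rather than $x^{2}$) in the second are precisely the two complementary asymmetric choices that pin both $x$ and $y$ to the common element $ayx$, from which the uniqueness claim follows.
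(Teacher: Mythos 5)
Your proof is correct: the auxiliary identities $a^{2}x=a$, $a^{2}y=a$, $y^{2}a=y$ all follow in one line from the hypotheses, the derivation $y^{2}a^{2}x=y^{2}a^{2}y$ collapses to $ayx=y$, and the substitution $a=aya$ into $x=xax$ together with the fact that $a$ commutes with every word in $x,y$ collapses to $x=ayx$; combining gives $x=y$. However, your route is genuinely different from the paper's. The paper argues structurally: from $axa=a$, $xax=x$, $ax=xa$ it deduces that $a$, $x$ and $ax$ generate the same principal left and right ideals, hence lie in a common $\mathcal{H}$-class containing the idempotent $ax$; by Green's theorem this $\mathcal{H}$-class is a subgroup, so a second relative inverse $x'$ yields a second idempotent $ax'$ in that group, forcing $ax=ax'=e$ and then $x'=ex'=xax'ax=xax=x$. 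Your argument buys complete self-containment --- nothing beyond associativity is used, so it is verifiable by anyone with no semigroup theory --- at the cost of two ad hoc asymmetric manipulations (which you correctly identify as the crux, since any symmetric computation is vacuous). The paper's argument buys brevity and context: it exhibits the relative inverse as the group inverse of $a$ inside the maximal subgroup $\mathcal{H}_{a}$, which is the conceptual reason uniqueness holds, but it leans on the cited Green's theorem.
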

\begin{proof}
  Let $s$ be arbitrary element of a semigroup $S$ such that there exists $x \in S$ with $sxs= s$, $xsx=x$ and $xs=sx$.
  Then $a$, $x$ and $ax$ are $\mathcal{H}$-related, i.e. $aS^{1} = axS^{1} = xS^{1}$ and $S^{1}a = S^{1}ax = S^{1}x$. It follows that $\mathcal{H}_{a} = \mathcal{H}_{x} = \mathcal{H}_{ax}$, where $\mathcal{H}_{y}$ denotes the $\mathcal{H}$-class in $S$ containing $y$. In particular, this $\mathcal{H}$-class contains an idempotent $ax$, so from Green's theorem (\cite[Theorem 2.16]{ClPr61}) it is a subgroup of $S$. Thus if $x' \in S$ is other element such that $ax'a = a$, $x'ax' = x'$ and $ax' = x'a$, we have $
ax = ax'= e$, as this is a unique idempotent in the subgroup $\mathcal{H}_{a}$. It follows that $x' = e x' = xax'ax = xax = x$, and the assertion follows.
\end{proof}

From the uniqueness of relative inverses it follows that for quasi left (right, respectively) non-degenerate solutions we have that $\lambda_{x}^{0}\lambda_{y}^{-} = \lambda_{y}^{-} \lambda_{x}^{0}$ ($\rho_{x}^{0}\rho_{y}^{-} = \rho_{y}^{-} \rho_{x}^{0}$, respectively) and therefore also $\lambda_{x}^{0}\lambda_{y}^{0} = \lambda_{y}^{0}\lambda_{x}^{0}$ ($\rho_{x}^{0}\rho_{y}^{0} = \rho_{y}^{0} \rho_{x}^{0}$, respectively) for all $x, y\in X$.

\smallskip

\begin{ex}
    The solution $(S, r)$ associated with any weak brace $S$ is quasi bijective. Moreover, if $S$ is dual, the solution $r$ also is quasi non-degenerate.
\end{ex}


\smallskip

The following is an example of a quasi non-degenerate solution that can not be obtained from a dual weak brace if $f, g \neq \id_X$.

\begin{ex}\label{ex:quasi-Lyubashenko}
  Let $X$ be a set and $f, g: X \to X$ two commuting maps such that they admit as relative inverses $f^-$ and $g^-$, respectively, in $X^X$. Then $r(x, y)=(f(y), g(x))$ is a quasi non-degenerate solution on $X$. In particular, if $g=f^-$, then the solution $r(x, y)=\left(f(y), f^-(x)\right)$ is cubic, i.e., $r^3=r$. In particular, if $r$ is also bijective, we obtain involutive Lyubashenko solutions.
\end{ex}

Motivated by the connection between non-degenerate solutions 
and self-distributive structures, we introduce the following definition.

\begin{defin}\label{def:quasi-rack}
    A shelf $(X,\triangleright)$ is called \emph{quasi rack} if, for all $x \in X$, the left multiplication $L_x$ admits a relative inverse in $X^X$, denoted by $L_x^-$, such that
    \begin{align*}
        L_xL_x^-L_x=L_x, \quad L_x^-L_xL_x^-=L_x^-, \quad L^0_x:=L_xL_x^-=L_x^-L_x,    \quad \text{and} \quad L^0_xL_y=L_yL^0_x, 
    \end{align*}
    for all $x, y \in X$.
In addition, a quasi rack $(X,\triangleright)$ is a \emph{quasi quandle} if $L_x(x)=x$, for all $x \in X$.
    \end{defin}

The relative inverse $L_{x}^{-}$, if exists, by \cref{uniqueness}, is uniquely determined. It follows that $(L_{x}L_{y})^{0} = L_{x}^{0}L_{y}^{0}$ for arbitrary $x, y$. We will use this fact in the forthcoming proofs without further comments.

\begin{rem} 
Note that there exist shelves that satisfy all conditions from \cref{def:quasi-rack}, except the last one. Indeed, let us consider a set $X$ with $|X|\geq 1$ endowed with the  shelf structure given by $x\triangleright y = x$, for all $x,y\in X$. Then every $L_x$ admits a relative inverse ($L_{x}^{-} = L_{x}$), but the condition $L^0_xL_y=L_yL^0_x$ is not satisfied for $x\neq y$. 
\end{rem}

\smallskip

\begin{ex}\label{ex_idemp}
    Let $X$ be a set and $f: X \to X$ an idempotent map. Set $x \triangleright y:=f(y)$, for all $x, y \in X$, then $(X, \triangleright)$ is a quasi rack.
\end{ex}

\begin{exs}\label{exs_quasi} Let $X$ be a Clifford semigroup.
 \begin{enumerate}
     \item  Define $x \triangleright y:= x^{-} y  x$, for all $x, y \in X$. Then $(X, \triangleright)$ is a quasi quandle that we call \emph{conjugation quasi quandle}.
     Clearly, if $X$ is a group, $(X, \triangleright)$ is the \emph{conjugation quandle}.
     
   \item More generally, let $X$ be a Clifford semigroup, $e \in \E(X)$, and set $x \triangleright y:=x^{-}yxe$, for all $x, y \in X$. Then $(X, \triangleright)$ is a quasi rack with $L^-_x=L_{x^-}$, for all $x \in X$.
   
     \item Define $x \triangleright y:= xy^{-} x$, for all $x, y \in X$. Then $(X, \triangleright)$ is a quasi quandle that we call \emph{core quasi quandle}.   
     Clearly, if $X$ is a group, $(X, \triangleright)$ is the \emph{core quandle}. 
     \end{enumerate}  
\end{exs}

\smallskip

The following example of a quasi rack is inspired by the construction of racks provided in \cite{ANDRUSKIEWITSCH}.
\begin{ex}
Let $(X, \triangleright)$ be a rack, $S$ a finite set, and $\alpha: X\times X\rightarrow S^{\,S\times S}$ a function. We will write $\alpha_{i, j}(s, t)= \alpha_{i, j}(s)(t)$, where $\alpha_{i, j}(s):S \rightarrow S$ is an element of the semigroup $S^{S}$. Assume that, for all $i, j, k \in X$
 and $s, t,u \in S$
 \begin{enumerate}
    \item the map $\alpha_{i, j}(s): S\rightarrow S$ has commuting relative inverse; 
    \item $\alpha_{i, j\triangleright k}(s)\alpha_{j, k}(t) = \alpha_{i\triangleright j, i\triangleright k}(\alpha_{i, j}(s, t))\alpha_{i, k}(s)$;
    \item $\alpha_{j, k}(t, (\alpha_{i, k}(s))^{0}(u)) = (\alpha_{i, j\triangleright k}(s))^{0}(\alpha_{j, k}(t, u))$.
    \end{enumerate}
Then $X\times S$ is a quasi rack with 
$$
L_{(i, s)}(j, t) = \left(i\triangleright j, \alpha_{i, j}(s)(t)\right),
$$
for all $i, j \in X$, $s, t \in S$. Direct computation shows that $L_{i, s}^{0}(j, t) = (j, \left(\alpha_{i, j}(s))^{0}(t)\right)$.


Taking $\alpha_{i, j}(s) = L_s$, where $(S, \triangleright_{s})$ is a quasi rack, we obtain a direct product of rack $(X, \triangleright)$ and a quasi rack $(S, \triangleright_{S})$. 
\end{ex}

\smallskip


The following technical lemma will be useful for the proof of main theorems of this section.
\begin{lemma}\label{lemma_Lx_gene}
    Let $(X, \triangleright)$ be a quasi rack. Then the following hold:
    \begin{enumerate}
    \item $L^0_xL_y=L^0_xL_{L^0_x(y)}$,
    \item $L_xL_y = L_{L^0_y(x)}L_y$,
    \item $L^0_xL_y = L^0_{L_y(x)}L_y$,
   \item $L^0_xL^0_{L^-_x(y)}=L_x^0L^0_y$,
    \end{enumerate}
    for all $x, y \in X$.
    \begin{proof}
        Let $x, y \in X$. Then $L_xL_y\underset{\eqref{self_distri}}{=}L_{L_x(y)}L_x= L_{L_x\left(L_x^0(y)\right)}L_x\underset{\eqref{self_distri}}{=}L_xL_{L^0_x(y)}$.
      Thus,
        \begin{align*}
            L_x^0L_y=L_x^-L_xL_y{=}L_x^-L_xL_{L^0_x(y)}=L^0_xL_{L^0_x(y)}.
        \end{align*}
        Moreover, as $L_{y}^{0}$ commutes with all $L_{z}$,
$$L_{L^0_y(x)}L_y=L^0_yL_{L^0_y(x)}L_y\underset{\ref{lemma_Lx_gene} - 1}{=}L^0_yL_xL_y=L_xL_y$$ 
        and, similarly, $$L^0_{L_y(x)}L_y=\left(L_{L_y(x)}L_y\right)^0L_y \underset{\eqref{self_distri}}{=}\left(L_xL_y\right)^0L_y=L_x^0L_y.$$
Finally, by $2.$,
      \begin{align*}
          L^-_xL_yL_x=L_x^-L_{L^0_x(y)}L_x = L_x^-L_{L_xL_x^-(y)}L_x \underset{\eqref{self_distri}}{=}L^0_xL_{L^-_x(y)}
      \end{align*}
      which implies $L^0_xL^0_{L^-_x(y)}=L_x^0L^0_y$.
\end{proof}
\end{lemma}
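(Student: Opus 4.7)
My approach is to derive the four identities directly from the axioms of a quasi rack, using self-distributivity \eqref{self_distri} in the form $L_a L_b = L_{L_a(b)} L_a$ as the main tool. Two auxiliary facts underpin every step: first, $L_x L_x^0 = L_x^0 L_x = L_x$, which follows at once from $L_x L_x^- L_x = L_x$ by writing the idempotent either as $L_x L_x^-$ or as $L_x^- L_x$; second, the idempotents $L_x^0$ commute with every $L_z$, hence also with each other, so that the formula $(L_{z_1}\cdots L_{z_n})^0 = L_{z_1}^0 \cdots L_{z_n}^0$ holds for any finite product of left translations.

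For (1), I would start from $L_x L_y = L_{L_x(y)} L_x$, replace the argument $L_x(y)$ by $L_x(L_x^0(y))$ using $L_x L_x^0 = L_x$, and then apply self-distributivity in reverse to reach $L_x L_y = L_x L_{L_x^0(y)}$. Left-multiplying by $L_x^-$ converts the leading $L_x$ to $L_x^0$ and yields (1). For (2), the plan is to insert the idempotent $L_y^0$ on the left of $L_{L_y^0(x)} L_y$ (legitimate because $L_y^0 L_y = L_y$), commute it past $L_{L_y^0(x)}$, invoke (1) with the roles of $x$ and $y$ swapped in the form $L_y^0 L_{L_y^0(x)} = L_y^0 L_x$, and finally reabsorb $L_y^0 L_y = L_y$ to land on $L_x L_y$.

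Part (3) should be short: use the product-idempotent formula to rewrite $L^0_{L_y(x)} L_y = L^0_{L_y(x)} L_y^0 L_y = (L_{L_y(x)} L_y)^0 L_y$, then apply self-distributivity $L_{L_y(x)} L_y = L_y L_x$ and read $(L_y L_x)^0 = L_x^0 L_y^0$. Using once again $L_y^0 L_y = L_y$ collapses this to $L_x^0 L_y$.

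The main obstacle, and the part I would save for last, is (4). The first move is to rewrite $L_x^- L_y L_x$ by applying (2) to the subword $L_y L_x = L_{L_x^0(y)} L_x$, then expanding $L_x^0(y) = L_x L_x^-(y)$ and using self-distributivity in the form $L_x L_{L_x^-(y)} = L_{L_x L_x^-(y)} L_x$ in reverse, which collapses the middle factor and yields $L_x^- L_y L_x = L_x^0 L_{L_x^-(y)}$. The delicate remaining step is to extract the canonical idempotent of each side: on the left, $(L_x^- L_y L_x)^0 = L_x^0 L_y^0 L_x^0$, which reduces to $L_x^0 L_y^0$ by idempotency and commutativity of the idempotents; on the right, $L_x^0$ is already idempotent and commutes, so $(L_x^0 L_{L_x^-(y)})^0 = L_x^0 L_{L_x^-(y)}^0$. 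Equating these gives (4). I expect the technical subtlety here to be justifying the idempotent extraction on the left, which uses the full strength of the hypothesis that each $L_z^0$ commutes with \emph{every} $L_w$, not merely with other idempotents.
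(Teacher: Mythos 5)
Your proposal is correct and follows essentially the same route as the paper's proof: part (1) via applying self-distributivity forward and backward around the insertion of $L_x^0$ and then left-multiplying by $L_x^-$; part (2) by inserting the commuting idempotent $L_y^0$ and invoking (1); part (3) via the product-idempotent formula $(L_{L_y(x)}L_y)^0 = (L_xL_y)^0$; and part (4) by first establishing $L_x^-L_yL_x = L_x^0L_{L_x^-(y)}$ and then extracting the canonical idempotents of both sides. The only difference is that you make explicit the auxiliary facts ($L_xL_x^0 = L_x$ and the multiplicativity of $(\cdot)^0$ on products of commuting-idempotent elements) that the paper uses without comment.
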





\smallskip

\begin{defin}\label{def:derived_map}
    Let $(X, \triangleright)$ be a quasi rack. Then we call the map $r_{\triangleright}:X\times X\to X\times X$ defined by
    \begin{align}\label{r_triang}
    r_\triangleright(x, y)=\left(L_x^0(y), L_y(x)\right),
\end{align}
for all $x, y\in X$, the \emph{derived map associated to $(X, \triangleright)$}.
\end{defin}

\smallskip

The following theorems provide sufficient conditions for the derived map to be a quasi non-degenerate solution. The conditions we require involve only idempotents and are therefore trivial in the bijective case.

\begin{theor}\label{teo-request0}
        Let $(X, \triangleright)$ be a quasi rack and assume that, for all $x, y\in X$,
 \begin{align}\label{request0}
        L^0_{L_x(y)} = L^0_x L^0_y \tag{$\ast$}
    \end{align}
    Then the derived map $r_\triangleright$ associated to $(X, \triangleright)$ in \eqref{r_triang} is a quasi non-degenerate solution.
\end{theor}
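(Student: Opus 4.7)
The plan is to verify the three Yang--Baxter relations \eqref{first}--\eqref{third} for $r_\triangleright$ (with $\lambda_x = L_x^0$ and $\rho_y = L_y$), and to confirm that $\lambda$ and $\rho$ are quasi non-degenerate. The non-degeneracy part is immediate from the quasi rack axioms: each $L_x^0$ is an idempotent and therefore serves as its own relative inverse, while $L_y$ has $L_y^-$ as relative inverse with associated idempotent $L_y^0$, and the commutations $L_x^0 L_y^0 = L_y^0 L_x^0$ and $L_y^0 L_z = L_z L_y^0$ are built into \cref{def:quasi-rack}.

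Before attacking the YBE I would extract two consequences of $(\ast)$. \emph{(i)} Writing $L_x^0(y) = L_x(L_x^-(y))$, a single application of $(\ast)$ followed by part~(4) of \cref{lemma_Lx_gene} gives
\[ L^0_{L_x^0(y)} = L_x^0 L_y^0. \]
\emph{(ii)} Part~(1) of \cref{lemma_Lx_gene} together with the commutation $L_x^0 L_a = L_a L_x^0$ yields $L_a L_x^0 = L_{L_x^0(a)} L_x^0$ as operators, i.e.\ each $L_x^0$ is a shelf endomorphism:
\[ L_x^0(a \triangleright b) = L_x^0(a) \triangleright L_x^0(b). \]
Granted (i) and (ii), the relation \eqref{first} becomes a direct computation in which both sides collapse to $L_x^0 L_y^0$ via $(\ast)$ and the idempotent/commuting nature of the $L^0$'s. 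For \eqref{third} I would use $L_a = L_a L_a^0$ to factor $L_{L_z(y)} L_{L_y^0(z)} = L_{L_z(y)}\bigl(L^0_{L_z(y)} L_{L_y^0(z)}\bigr)$; by $(\ast)$ and part~(1) of \cref{lemma_Lx_gene} the inner factor simplifies to $L_z L_y^0$, and self-distributivity $L_{L_z(y)} L_z = L_z L_y$ then closes the computation.

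The main obstacle, in my view, is \eqref{second}. After expanding both sides using $(\ast)$ and (ii), the identity reduces to
\[ L_y^0(z) \triangleright L_x^0 L_y^0(y) = L_y^0(z) \triangleright L_x^0(y). \]
My strategy is to prove the stronger claim that, for every $b \in X$, the element $\alpha := L_y^0(z) \triangleright L_x^0(b)$ is fixed by $L_y^0$. Applying part~(3) of \cref{lemma_Lx_gene} to $L_x^0$ and $L_{L_y^0(z)}$ yields $L_x^0 L_{L_y^0(z)} = L^0_{L_{L_y^0(z)}(x)}\, L_{L_y^0(z)}$, and by $(\ast)$ and (i) the indexing operator simplifies to $L_y^0 L_z^0 L_x^0$. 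Commuting $L_x^0$ past $L_{L_y^0(z)}$ then shows that the commuting idempotent $L_y^0 L_z^0$ fixes $\alpha$; a further application of $L_y^0$ (combined with its idempotency and commutativity with $L_z^0$) isolates $L_y^0(\alpha) = \alpha$. Finally, the shelf-homomorphism (ii) rewrites $L_y^0(\alpha)$ as $L_y^0(z) \triangleright L_x^0 L_y^0(b)$, yielding the desired identity on specialising $b = y$. This fixed-point argument is the delicate step, as the naive cancellation approach to \eqref{second} fails without either \eqref{astastast} or precisely this shelf-homomorphism observation.
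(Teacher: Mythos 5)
Your proof is correct; every step checks out. Facts (i) and (ii) are valid consequences of \eqref{request0} and \cref{lemma_Lx_gene} (and (ii) in fact needs only the quasi rack axioms), the computations for \eqref{first} and \eqref{third} close as described, and the fixed-point argument for \eqref{second} is sound: from $\alpha = L_y^0L_z^0(\alpha)$ one gets $L_y^0(\alpha) = L_y^0L_z^0(\alpha) = \alpha$, and (ii) converts this into $L_y^0(z)\triangleright L_x^0L_y^0(y) = L_y^0(z)\triangleright L_x^0(y)$, which is exactly what \eqref{second} reduces to after expanding both sides. The route differs from the paper's in one essential respect: the paper first upgrades \eqref{request0} to the full operator identity $L_{L^0_x(y)} = L^0_xL_y$ (obtained from \eqref{request0} together with parts 1 and 4 of \cref{lemma_Lx_gene}), after which \eqref{third} and, crucially, \eqref{second} become short substitution chains --- the term $L_{L^0_{L_y(x)}(z)}$ is rewritten at once as $L^0_{L_y(x)}L_z$ and no fixed-point argument is needed; the paper also deduces \eqref{first} by applying the $(-)^0$ operation to \eqref{third}, whereas you prove it directly. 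Your version works only with the weaker ``projected'' identities $L_{L_x^0(a)}L_x^0 = L_x^0L_a$ and $L^0_{L_x^0(y)} = L_x^0L_y^0$ and pays for this in \eqref{second} with the fixed-point claim; the benefit is that it isolates precisely where \eqref{request0} is used (your (ii) holds in any quasi rack). If you want to streamline the write-up, deriving the stronger identity $L_{L^0_x(y)} = L^0_xL_y$ first --- your ingredients essentially already yield it --- makes \eqref{second} immediate.
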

\begin{proof} 
   Initially, for all $x, y\in X$, let $\lambda_x(y):=L_x^0(y)$ and $\rho_y(x):=L_y(x)$. 
     First, we show that, for all $x, y \in X$,
    \begin{align}\label{cons0}
        L_{L^0_x(y)} = L^0_xL_y.
    \end{align}
    In fact, if $x, y \in X$,
    $$
    L_{L^0_x(y)}=L_{L^0_x(y)}L^0_{L_x\left(L^-_x(y)\right)}=L_{L^0_x(y)}L^0_xL^0_{L^-_x(y)}\underset{\ref{lemma_Lx_gene}-4.}{=}L_{L^0_x(y)}L^0_xL^0_y\underset{\ref{lemma_Lx_gene}-1.}{=}L^0_xL_y.
    $$
    Now, if $x, y, z \in X$, we have
\begin{align*}
    \rho_{\rho_y\left(x\right)}\rho_{\lambda_x\left(y\right)}
    &\underset{\eqref{cons0}}{=}
        L_{L_y(x)}L_yL_x^0
        \underset{\eqref{self_distri}}{=}
        L_y L_x L_x^0
        = L_y L_x = \rho_y\rho_x,
    \end{align*}
    namely, equation \eqref{third} is satisfied. It follows that
$$\lambda_{x}\lambda_{y} = \rho_x^{0} \rho_y^{0} = (\rho_y\rho_x)^{0} \underset{\eqref{third}}{=}(\rho_{\rho_y\left(x\right)}\rho_{\lambda_x\left(y\right)})^{0} = 
  \rho_{\lambda_x\left(y\right)} ^{0} \rho_{\rho_y\left(x\right)}^{0} =
 \lambda_{\lambda_x\left(y\right)} \lambda_{\rho_y\left(x\right)},$$ so \eqref{first} follows from \eqref{third}. Furthermore, \eqref{second} follows by
    \begin{align*}
    \lambda_{\rho_{\lambda_y\left(z\right)}\left(x\right)}\rho_z\left(y\right)
    &\underset{\eqref{request0}}=L^0_{L^0_y(z)}L^0_xL_z(y)\underset{\eqref{cons0}}{=}L^0_xL^0_yL_z(y) = \left(L^0_yL^0_x\right)L^0_xL_z(y)\underset{\eqref{request0}}{=}L^0_{L_y(x)}L^0_xL_z(y)\\
    &\underset{\eqref{cons0}}{=}L_{L^0_{L_y(x)}(z)}L^0_x(y)=\rho_{\lambda_{\rho_y\left(x\right)}\left(z\right)}\lambda_x\left(y\right).
\end{align*}
 Finally, note that $\left(X, r_\triangleright\right)$ is clearly quasi non-degenerate.  Therefore, the claim follows.
\end{proof}

\smallskip

\begin{theor}\label{teo-request}
        Let $(X, \triangleright)$ be a quasi rack and assume that, for all $x, y\in X$,
 \begin{align}\label{request1}
        L_y(x) = L_{L^0_x(y)}(x). \tag{$\ast\ast$}
    \end{align}
    Then the derived map $r_\triangleright$ associated to $(X, \triangleright)$ in \eqref{r_triang} is a quasi non-degenerate solution.
\end{theor}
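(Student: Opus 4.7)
The plan is to verify the Yang--Baxter identities \eqref{first}--\eqref{third} for $r_\triangleright$ with $\lambda_x := L_x^0$ and $\rho_y := L_y$, and then to check the quasi non-degeneracy conditions. The latter is immediate: $L_x^0$ is idempotent (hence its own relative inverse), $L_x$ has relative inverse $L_x^-$ by the quasi rack axioms, and the required commutation relations for both $\lambda$ and $\rho$ follow from $L_x^0 L_y = L_y L_x^0$.

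The central observation is a symmetric reading of \eqref{request1}: substituting $(y,x)\mapsto(z,y)$ in $L_y(x)=L_{L_x^0(y)}(x)$ yields $L_z(y)=L_{L_y^0(z)}(y)$, which I expect will play the role here that the consequence $L_{L_x^0(y)}=L_x^0 L_y$ plays in the proof of \cref{teo-request0}. Two auxiliary identities will also be used: from \cref{lemma_Lx_gene}(3), right-multiplication by $L_y^-$ gives $L^0_{L_y(x)} L_y^0 = L_x^0 L_y^0$; and combining \cref{lemma_Lx_gene}(3) with \eqref{request1} (through $L_{L_x^0(y)}(x)=L_y(x)$) together with \cref{lemma_Lx_gene}(1) gives $L^0_{L_y(x)} L_{L_x^0(y)} = L_x^0 L_y$, and hence $L^0_{L_y(x)} L^0_{L_x^0(y)} = L_x^0 L^0_{L_x^0(y)}$.

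For \eqref{third}, one applies \cref{lemma_Lx_gene}(2) to rewrite $L_z L_y = L_{L_y^0(z)} L_y$, then self-distributivity to obtain $L_{L_y^0(z)} L_y = L_{L_{L_y^0(z)}(y)} L_{L_y^0(z)}$, and finally the symmetric reading of \eqref{request1} to replace $L_{L_y^0(z)}(y)$ by $L_z(y)$. For \eqref{second} both sides are simplified separately: the LHS collapses to $L_x^0(L_z(y))$ via \cref{lemma_Lx_gene}(3) together with $L_z(y)=L_{L_y^0(z)}(y)$; for the RHS, one first moves $L_x^0$ inside (commutativity with every $L_w$), then applies \eqref{request1} to the subscript $L^0_{L_y(x)}(z)$ and the auxiliary $L^0_{L_y(x)} L_y^0 = L_x^0 L_y^0$ to bring it to $L_x^0(L_{L_x^0(z)}(y))$, which equals the LHS by \cref{lemma_Lx_gene}(1). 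For \eqref{first}, commutativity of idempotents and the auxiliary $L^0_{L_y(x)} L^0_{L_x^0(y)} = L_x^0 L^0_{L_x^0(y)}$ rewrite the RHS as $L_x^0 L^0_{L_x^0(y)}(z)$; then \cref{lemma_Lx_gene}(3) applied with $(L_x^-(y), x)$, using $L_x L_x^-(y) = L_x^0(y)$, gives $L_x^0 L^0_{L_x^0(y)} = L_x^0 L^0_{L_x^-(y)}$, and \cref{lemma_Lx_gene}(4) yields $L_x^0 L^0_{L_x^-(y)} = L_x^0 L_y^0$, matching the LHS.

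The main obstacle I anticipate is the careful bookkeeping around moving the idempotents $L_x^0$ across composite subscripts of the form $L^0_{L_y(x)}(z)$; the cancellations are driven by the interaction of the commutativity $L_x^0 L_w = L_w L_x^0$ with \eqref{request1} applied \emph{to the subscript} rather than to the point of evaluation. The single new identity that here replaces the role played by \eqref{request0} in the proof of \cref{teo-request0} is the symmetric reading $L_z(y) = L_{L_y^0(z)}(y)$ of \eqref{request1}.
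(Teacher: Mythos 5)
Your proposal is correct and follows essentially the same route as the paper: the same assignment $\lambda_x=L_x^0$, $\rho_y=L_y$, and the same direct verification of \eqref{first}--\eqref{third} using \cref{lemma_Lx_gene} together with \eqref{request1} applied both at the point of evaluation and inside subscripts. The only cosmetic deviation is that you check \eqref{first} by a direct computation through \cref{lemma_Lx_gene}, whereas the paper deduces it from the already-established \eqref{third} by applying the idempotent operation $(\cdot)^0$ to $\rho_{\rho_y(x)}\rho_{\lambda_x(y)}=\rho_y\rho_x$; likewise your rewriting chain for the right-hand side of \eqref{second} differs in intermediate steps but uses the same ingredients and lands at $L_x^0L_z(y)$.
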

\begin{proof}
 First, for all $x, y\in X$, let $\lambda_x(y):=L_x^0(y)$ and $\rho_y(x):=L_y(x)$.  Then, for all $x, y \in X$,
    \begin{align*}
    \rho_{\rho_y\left(x\right)}\rho_{\lambda_x\left(y\right)}
        &\underset{\eqref{request1}}{=}L_{L_{L^0_x(y)}(x)}L_{L^0_x(y)}
        \underset{\eqref{self_distri}}{=}L_{L^0_x(y)}L_x
        \underset{\ref{lemma_Lx_gene}-2.}{=} L_yL_x
        = \rho_y\rho_x,
    \end{align*}
    namely, equation \eqref{third} holds. Hence, again \eqref{first} follows by \eqref{third} as in proof of \cref{teo-request0}.
    Moreover, if $x,y, z\in X$, we obtain that
    \begin{align*}
    \lambda_{\rho_{\lambda_y\left(z\right)}\left(x\right)}\rho_z\left(y\right)
    &\underset{\eqref{request1}}{=} 
    L^0_{L_{L^0_y(z)}(x)}L_{L^0_y(z)}(y)
    \underset{\ref{lemma_Lx_gene}-3.}{=} L^0_x L_{L^0_y(z)}(y)
    \underset{\eqref{request1}}{=}
L^0_xL_z(y)
\end{align*}
and
\begin{align*}
\rho_{\lambda_{\rho_y\left(x\right)}\left(z\right)}\lambda_x\left(y\right)
&= L_{L^0_{L_y(x)}(z)}L^0_x(y)
\underset{\eqref{request1}}{=}  L_{L^{0}_{L^0_x(y)}L^0_{L_y(x)}(z)}L^0_x(y)\\
&\underset{\eqref{request1}}{=}
L_{L^{0}_{L^0_x(y)}L^0_{L_{L^0_x(y)}(x)}(z)}L^0_x(y)
\underset{\eqref{self_distri}}{=}
L_{L^0_{L^0_x(y)}L^0_x(z)}L^0_x(y)\\
&\underset{\eqref{request1}}{=}L_{L^0_x(z)}L^0_x(y)\underset{\ref{lemma_Lx_gene}-1.}{=}L^0_xL_z(y)
\end{align*}
i.e., \eqref{second} holds.  Therefore, the claim follows.
\end{proof}

\smallskip

\begin{cor}\label{cor-request}
    Let $(X, \triangleright)$ be a quasi rack and assume that for all $x \in X$
     \begin{align}\label{L0}
       L^0_x(x) = x. \tag{$\ast\ast\ast$}
    \end{align}
    Then the derived map $r_\triangleright$ associated to $(X, \triangleright)$ is a quasi non-degenerate solution.
    \begin{proof}
It is enough to observe that the condition \eqref{request1} holds. In fact, for all $x, y \in X$, $L_y(x) = L_x^0L_y(x)\underset{\ref{lemma_Lx_gene}-1.}{=}L^0_xL_{L^0_x(y)}(x) = L_{L^0_x(y)}(x)$.     By \cref{teo-request} the claim follows. 
    \end{proof}
\end{cor}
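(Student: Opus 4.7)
The plan is to deduce the corollary from Theorem~\ref{teo-request}: since that theorem already gives us a quasi non-degenerate solution under hypothesis \eqref{request1}, it suffices to verify that \eqref{L0} implies \eqref{request1}, i.e.\ that $L_y(x) = L_{L^0_x(y)}(x)$ for all $x,y\in X$.

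The key observation is that the defining commutation $L^0_x L_y = L_y L^0_x$ from \cref{def:quasi-rack}, combined with the fixed-point hypothesis $L^0_x(x)=x$, lets me freely insert or remove an idempotent $L^0_x$ on either side of $L_y$ when it is being evaluated at $x$. Concretely, first I would write
\begin{align*}
L_y(x) \;=\; L_y\bigl(L^0_x(x)\bigr) \;=\; L_y L^0_x(x) \;=\; L^0_x L_y(x),
\end{align*}
using \eqref{L0} in the first equality and the commutation of $L^0_x$ with $L_y$ in the last.

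Next I would apply part~1 of \cref{lemma_Lx_gene}, which gives the identity $L^0_x L_y = L^0_x L_{L^0_x(y)}$, to rewrite
\begin{align*}
L^0_x L_y(x) \;=\; L^0_x L_{L^0_x(y)}(x).
\end{align*}
Finally, I would remove the outer $L^0_x$ by the same trick used at the start: commute it past $L_{L^0_x(y)}$ and then apply \eqref{L0} once more,
\begin{align*}
L^0_x L_{L^0_x(y)}(x) \;=\; L_{L^0_x(y)} L^0_x(x) \;=\; L_{L^0_x(y)}(x).
\end{align*}
Chaining these three equalities yields $L_y(x) = L_{L^0_x(y)}(x)$, which is exactly \eqref{request1}, and \cref{teo-request} then completes the proof.

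There is essentially no obstacle here: both the commutation law $L^0_x L_y = L_y L^0_x$ and \cref{lemma_Lx_gene} have been proved for arbitrary quasi racks, and the whole argument reduces to a three-line manipulation that uses \eqref{L0} only through the tautology $L^0_x(x)=x$. The only point requiring care is to perform the evaluations at $x$ (rather than at a generic element), since the step $L^0_x L_y(x) = L_y(x)$ genuinely relies on the argument being $x$, not arbitrary.
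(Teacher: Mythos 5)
Your proof is correct and is essentially identical to the paper's: both verify condition \eqref{request1} via the chain $L_y(x) = L^0_xL_y(x) = L^0_xL_{L^0_x(y)}(x) = L_{L^0_x(y)}(x)$, using \eqref{L0}, the commutation $L^0_xL_y = L_yL^0_x$, and \cref{lemma_Lx_gene}-1, and then invoke \cref{teo-request}. You merely spell out the justifications that the paper leaves implicit.
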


\smallskip

\begin{rems} \label{rem-ex-astastast-noast} 
From the proof of \cref{cor-request}, it is clear that \eqref{astastast} imply \eqref{astast}. Below, we clarify through a few examples that the other conditions found above do not, in general, imply one another.
\begin{enumerate}
    \item Note that in general conditions \eqref{request0} or \eqref{request1} do not imply condition \eqref{L0}. Indeed, consider the quasi rack $(X, \triangleright)$ from \cref{exs_quasi}-2. Then \eqref{request0} and \eqref{request1} are satisfied, but $L^0_x(x) =xe \neq x$. 
    \item Let $X= \{1, 2, 3\}$ and consider the quasi rack such that $L_{1}(x) = 1$, for all $x\in X$, $L_{2} = L_{1}$ and $L_{3} = \id_{X}$. Then it is easy to check that the condition \eqref{astast} is not satisfied, whereas \eqref{ast} holds.
    \item Let $X = \{1, 2, 3\}$ and consider the quasi rack such that $L_1$ is the map defined by $L_{1}(1) = L_{1}(2) = 1$ and $L_{1}(3) =3$, and additionally $L_{2} = L_{3} = \id_{X}$. Then,  $L_{x}^{-} = L_{x}$, for all $x\in X$, and $L_{1} = L_{1}L_{3}\neq L_{L_{1}(3)} = L_{3}$, so \eqref{ast} is not satisfied, but \eqref{astastast} holds. This example also shows that \eqref{astast} does not imply \eqref{ast}.
\end{enumerate}
\end{rems}

\smallskip

For completeness, we note that there exist quasi racks that do not satisfy any of the above conditions \eqref{ast}, \eqref{astast}, or \eqref{astastast}, yet the associated derived map is a solution.
\begin{rem}
For $X = \{1, 2, 3, 4\}$ let us define the quasi rack such that $L_{1} = L_{4}\equiv 1$, $L_{2} = \id_{X}$, $L_{3}(k) = k$ for $k=1, 2, 3$ and $L_{3}(4) = 1$. Then it can be easily checked that the derived map form \cref{def:derived_map} is a solution, but conditions \eqref{ast} and \eqref{astast} are not satisfied.
\end{rem}


\smallskip

\begin{rem}
Observe that condition \eqref{request1} ensures that maps $\lambda_x, \rho_x$ that define the solution in \cref{teo-request} are such that
    \begin{align*}
   x \triangleright
    y =L^0_xL_x(y)= L^0_x L_{L^{0}_y(x)}(y)
    = \lambda_x\rho_{\lambda^{-}_y(x)}(y),
    \end{align*}
for all $x,y\in X$. 
Similarly, the same equality holds for quasi racks satisfying 
\eqref{ast} and \eqref{astastast}. 
This is exactly the definition of the associated rack of arbitrary left non-degenerate solution, where $\lambda^{-}_y(x) = \lambda^{-1}_y(x)$. We anticipate that, in \cref{Section3}, we will introduce shelf structures associated to quasi left non-degenerate solutions such that the above identity is always satisfied.
\end{rem}

\smallskip

\begin{cor}
    Let $(X, \triangleright)$ be a quasi rack. Then the derived map $r_\triangleright$ is a left non-degenerate solution if and only if $(X, \triangleright)$ is a rack and $r_\triangleright$ is of the form $r(x, y) = (y, y\triangleright x)$, for all $x, y \in X$.
    \begin{proof}
       As, for every $y\in X$ the map $L_y^{0}$ is idempotent, it follows that it is bijective if and only if $L^{0}_{y} = \id_X$. Then $L_{y} L_{y}^{-} = \id_X = L_{y}^{-}L_{y}$ and thus $L_{y}$ are also bijective,  for all $y\in Y$. 
    \end{proof}
\end{cor}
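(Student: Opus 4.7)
The plan is to chase both directions through a single observation: because $L_{x}^{0} = L_{x}L_{x}^{-}$ is by construction an idempotent of the transformation monoid $X^{X}$, and the only idempotent bijection of a set is the identity map, the first component $\lambda_{x} = L_{x}^{0}$ of the derived map is a bijection if and only if $L_{x}^{0} = \id_{X}$.

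For the forward implication, I would assume $r_{\triangleright}$ is left non-degenerate, which by definition means $\lambda_{x} \in \Sym_{X}$ for every $x \in X$. By the observation above this forces $L_{x}^{0} = \id_{X}$, and hence $L_{x}L_{x}^{-} = \id_{X} = L_{x}^{-}L_{x}$, so $L_{x}$ is bijective with $L_{x}^{-1} = L_{x}^{-}$. Consequently $(X, \triangleright)$ is a rack, and substituting $L_{x}^{0} = \id_{X}$ into the formula $r_{\triangleright}(x,y) = (L_{x}^{0}(y), L_{y}(x))$ yields exactly $r_{\triangleright}(x, y) = (y, y\triangleright x)$.

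The reverse implication is immediate: if $(X, \triangleright)$ is a rack then every $L_{x}$ is bijective, its semigroup inverse coincides with its group-theoretic inverse, and $L_{x}^{0} = \id_{X}$, so once again $r_{\triangleright}(x,y) = (y, y\triangleright x)$; this is a left non-degenerate solution by \cref{YBE_and_shelf}.

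There is essentially no obstacle; the only thing to be careful about is to note that an idempotent map on $X$ that is a bijection must be the identity (apply the idempotent equation after composing with the inverse), which is what upgrades the semigroup inverse to a genuine two-sided inverse and collapses the derived map to the classical form.
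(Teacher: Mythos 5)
Your proposal is correct and follows essentially the same route as the paper's own proof: both rest on the observation that the idempotent $L_{x}^{0}$ is bijective only if it equals $\id_{X}$, which forces $L_{x}L_{x}^{-}=\id_{X}=L_{x}^{-}L_{x}$ and hence bijectivity of each $L_{x}$. Your write-up merely spells out the two directions and the collapse of the formula $r_{\triangleright}(x,y)=(L_{x}^{0}(y),L_{y}(x))$ to $(y,y\triangleright x)$ more explicitly than the paper does.
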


\smallskip

Below, we prove that \eqref{L0} is a sufficient condition for the derived map to be also quasi bijective.
\begin{theor}\label{teo_quasibij}
   Let $(X, \triangleright)$ be a quasi rack and assume that \eqref{L0} is satisfied.  
    Then the derived map $r_\triangleright$ associated to $(X, \triangleright)$
is quasi bijective and quasi non-degenerate solution.
\begin{proof}
  Let $x, y \in X$ and consider the map $r_\triangleright^-(x, y)=\left( L^-_x(y), L^0_y(x)\right)$. 
  Let us prove that $r_\triangleright r_\triangleright^-r_\triangleright=r_\triangleright, \quad r_\triangleright^-r_\triangleright r_\triangleright^-=r_\triangleright^-, \quad r_\triangleright r_\triangleright^-=r_\triangleright^- r_\triangleright$. First, we claim that
     \begin{align*}
        f(x, y):=r_\triangleright^-r_\triangleright(x, y)=r_\triangleright r_\triangleright^-(x, y)=\left(L^0_y(x), L^0_x(y)\right).
    \end{align*}
    In fact, $r_\triangleright^-r_\triangleright(x, y)=\left(L^-_{L^0_x(y)}L_y(x), \, L^0_{L_y(x)}L^0_x(y)\right)$ and
    \begin{align*}
       L^-_{L^0_x(y)}L_y(x)\underset{\eqref{request1}}{=}L^0_{L^0_x(y)}(x)\underset{\eqref{astastast}}{=}
       L^0_{L^0_x(y)}L^0_x(x)
       \underset{\ref{lemma_Lx_gene}-1.}{=}
       L^0_yL^0_x(x) \underset{\eqref{astastast}}{=} L^0_y(x)
    \end{align*}
    and
    \begin{align*}
      L^0_{L_y(x)}L^0_x(y)\underset{\eqref{L0}}{=}L^0_{L_y(x)}L^0_yL^0_x(y) \underset{\ref{lemma_Lx_gene}-3.}{=}L^0_xL^0_y(y)\underset{\eqref{L0}}{=}L^0_x(y).
    \end{align*}
    Moreover, $r_\triangleright r_\triangleright^-(x, y)=\left(L^0_{L^-_x(y)}L^0_y(x), \, L_{L^0_y(x)}L_x^-(y)\right)$ and
    \begin{align*}
        L^0_{L^-_x(y)}L^0_y(x)\underset{\eqref{L0}}{=}L^0_xL^0_{L^-_x(y)}L^0_y(x)\underset{\eqref{self_distri}}{=}L^0_{L^0_x(y)}L^0_xL^0_y(x)\underset{\ref{lemma_Lx_gene}-1.}{=}L^0_xL^0_y(x)\underset{\eqref{L0}}{=}L^0_y(x)
    \end{align*}
    and 
    \begin{align*}
        L^{}_{L^0_y(x)}L_x^-(y)
        \underset{\eqref{astastast}}{=}
        L^{}_{L^0_y(x)}L^0_yL_x^-(y)
        \underset{\ref{lemma_Lx_gene}-1.}{=}
        =L^0_y L_x L^-_x(y)
        \underset{\eqref{astastast}}{=}
        = L^0_x(y).
    \end{align*}
    Thus, we get
    \begin{align*}
        r_\triangleright r_\triangleright^-r_\triangleright(x,y)
        = fr_\triangleright(x,y)
        &=\left(L^0_{L_y(x)}L^0_x(y), \, L^0_{L^0_x(y)}L_y(x)\right)\\
        &= \left(L^0_{L_y(x)}L^0_yL^0_x(y), \, L_x^0L^0_{L^0_x(y)}L_y(x)\right) &\mbox{by \eqref{L0}}\\
        &= \left(  L^0_xL^0_y(y), L_x^0L_y(x) \right)&\mbox{by \cref{lemma_Lx_gene}-1.}\\
        &=\left(L_x^0(y), L_y(x)\right)&\mbox{by \eqref{L0}}\\
        &=r_\triangleright(x, y).
    \end{align*}
    By the previous computations,
  \begin{align*}
      r_\triangleright^-r_\triangleright r_\triangleright^-(x,y)
      =r_\triangleright^-f(x, y)
      &=\left(L^-_{L^0_y(x)}L_x^0(y), \, L^0_{L_x^0(y)}L^0_y(x)\right)\\
      &=\left(L^0_yL^-_{L^0_y(x)}L^0_x(y), \, L^0_xL^0_y(x)\right)&\mbox{by \eqref{astastast} \& \ref{lemma_Lx_gene}-1.}\\
      &= \left(L^0_yL_x^-(y), \, L^0_y(x)\right)&\mbox{by \eqref{astastast} \& by \ref{lemma_Lx_gene}-1.}\\
      &=\left(L_x^-(y), L^0_y(x)\right)&\mbox{by \eqref{L0}}\\
      &=r_{\triangleright}^-(x, y).
  \end{align*}  
  Consequently,  by \cref{r-}, $\left(X, r^-\right)$ is a solution and it is clearly quasi non-degenerate.
\end{proof}
\end{theor}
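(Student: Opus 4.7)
The plan is to leverage the preceding results as much as possible. Since \eqref{L0} implies \eqref{astast} (as shown in the proof of \cref{cor-request}), the hypothesis of \cref{teo-request} is satisfied, and hence $r_\triangleright$ is already a quasi non-degenerate solution. Thus the only thing remaining is to exhibit an explicit $r_\triangleright^{-}$ witnessing quasi bijectivity in the sense of \cref{defn-quasi}.

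Guided by the fact that $\lambda_x = L_x^0$ is idempotent while $\rho_y = L_y$ is the map carrying the genuine relative inverse $L_y^{-}$, and by the heuristic from the dual weak brace case (where $r^{-}$ is obtained by essentially swapping the roles of the two coordinates of $r$), I would try the candidate
\[
r_\triangleright^{-}(x, y) \;:=\; \bigl(L_x^{-}(y),\, L_y^{0}(x)\bigr).
\]
My first concrete step is to show that both composites are equal to the ``symmetric'' map
\[
f(x, y) \;=\; \bigl(L_y^{0}(x),\, L_x^{0}(y)\bigr).
\]
For the first coordinate of $r_\triangleright^{-} r_\triangleright$ this amounts to proving $L_{L_x^0(y)}^{-} L_y(x) = L_y^{0}(x)$, which should follow by first invoking \eqref{astast} (available via \cref{cor-request}) to rewrite $L_y(x) = L_{L_x^0(y)}(x)$, collapsing $L_{L_x^0(y)}^{-} L_{L_x^0(y)} = L_{L_x^0(y)}^{0}$, and then using \cref{lemma_Lx_gene}(1) together with the diagonal identity \eqref{L0} to reduce to $L_y^{0}(x)$. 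The other three coordinate computations (one from $r_\triangleright^{-} r_\triangleright$ and two from $r_\triangleright r_\triangleright^{-}$) should go through analogously, each requiring one application of \eqref{L0} to replace an expression of the form $L_z^{0}(z)$ by $z$.

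Once the key identity $r_\triangleright r_\triangleright^{-} = r_\triangleright^{-} r_\triangleright = f$ is established, the two remaining conditions reduce to verifying $f \circ r_\triangleright = r_\triangleright$ and $f \circ r_\triangleright^{-} = r_\triangleright^{-}$; these are direct coordinate-wise manipulations where the idempotents $L_x^{0}$ are absorbed using \cref{lemma_Lx_gene}(1) and collapsed on diagonal entries via \eqref{L0}. Quasi non-degeneracy of $r_\triangleright^{-}$ is then immediate from its explicit form, and \cref{r-} automatically promotes $r_\triangleright^{-}$ to a bona fide solution, completing the argument.

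The main obstacle I expect is the bookkeeping of the idempotent corrections: each factor $L_y L_y^{-} = L_y^{0}$ has to be inserted or removed at precisely the right moment, and \eqref{L0} — the only place the diagonal hypothesis enters — must be triggered exactly on expressions of the form $L_z^0(z)$. A clean strategy is to first normalize every intermediate expression via \cref{lemma_Lx_gene}, pushing all $L^{0}$-type operators past the $L$-type ones, and only then apply the diagonal collapse.
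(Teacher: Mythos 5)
Your proposal is correct and follows essentially the same route as the paper: the same candidate $r_\triangleright^{-}(x,y)=(L_x^{-}(y),L_y^{0}(x))$, the same intermediate map $f(x,y)=(L_y^{0}(x),L_x^{0}(y))$ shown to equal both composites, the same reduction of the remaining identities to $fr_\triangleright=r_\triangleright$ and (up to the harmless choice of which factorization of $f$ to use) $r_\triangleright^{-}f=r_\triangleright^{-}$, and the same appeal to \cref{lemma_Lx_gene}, \eqref{astast}, \eqref{L0}, and \cref{r-}. No gaps.
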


\smallskip

\begin{lemma}\label{lemma_solu_meno}
    Let $(X, \triangleright)$ be a quasi rack and assume that \eqref{L0} is satisfied, for all $x\in X$. Then, $(X, \triangleleft)$ with 
       $y\triangleleft x:= L^-_x(y)$, for all $x,y\in X$,
    is a right quasi rack. 
\end{lemma}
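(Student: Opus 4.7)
My approach is to verify directly that the right multiplications $R_x\colon y\mapsto L_x^-(y)$ form a right quasi rack on $X$. The natural candidate for the relative inverse $R_x^-$ is $L_x$ itself, which immediately gives $R_x^0 = L_x^0$; the axioms $R_xR_x^-R_x = R_x$, $R_x^-R_xR_x^- = R_x^-$ and $R_xR_x^- = R_x^-R_x$ are then inherited from the quasi rack axioms satisfied by $(X,\triangleright)$. This reduces the problem to establishing right self-distributivity of $\triangleleft$ and the commutation $R_x^0R_y = R_yR_x^0$.

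The heart of the argument is the identity $L_x^-L_y^- = L_{L_x^-(y)}^-L_x^-$. First I would prove the semigroup-inverse formula $(L_xL_y)^- = L_y^-L_x^-$ by checking the three defining relations of a relative inverse directly. This computation uses only the quasi rack axiom $L_x^0L_y = L_yL_x^0$ together with the fact, obtained from the uniqueness of relative inverses applied to $L_x^0L_y = L_yL_x^0$, that $L_x^0$ also commutes with $L_y^-$. Inverting left self-distributivity $L_xL_y = L_{L_x(y)}L_x$ via this formula yields
$$
L_y^-L_x^- = L_x^-L_{L_x(y)}^-,
$$
and substituting $L_x^-(y)$ for $y$ gives $L_{L_x^-(y)}^-L_x^- = L_x^-L_{L_x^0(y)}^-$.

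To close the gap between $L_x^-L_{L_x^0(y)}^-$ and $L_x^-L_y^-$, I would invoke \cref{lemma_Lx_gene}(1): the identity $L_x^0L_y = L_x^0L_{L_x^0(y)}$ passes to relative inverses to give $L_x^0L_y^- = L_x^0L_{L_x^0(y)}^-$, and left-multiplying by $L_x^-$ (with $L_x^-L_x^0 = L_x^-$) collapses both sides to the desired equality. Combining this with the previous display produces right self-distributivity. The remaining axiom $R_x^0R_y = R_yR_x^0$ is exactly $L_x^0L_y^- = L_y^-L_x^0$, which was already secured.

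The principal obstacle is the bookkeeping of semigroup relative inverses: specifically, justifying $(L_xL_y)^- = L_y^-L_x^-$ and tracking how the idempotents $L_x^0$ propagate through compositions involving both $L_y$ and $L_y^-$. Once these two commutation facts are secured, the computation of right self-distributivity is essentially the symmetric image of the left self-distributivity identity in $(X,\triangleright)$, and the rest of the axioms are routine.
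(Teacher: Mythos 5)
Your proof is correct and takes essentially the same route as the paper's: the paper's (one-line) argument likewise reduces right self-distributivity of $\triangleleft$ to the identities of \cref{lemma_Lx_gene} pushed through the unique relative inverses via \cref{uniqueness}, and your steps are the natural filling-in of that claim. It is worth noting that, just as in the paper's own proof, your argument never actually invokes the hypothesis \eqref{L0}.
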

\begin{proof}
It directly follows by observing that, by \cref{lemma_Lx_gene}-2., the right multiplications $R_x:X\to X, y\mapsto y\triangleleft x$ satisfy the right version of \eqref{self_distri}.
\end{proof}

\begin{rem}
 Note that the map $r_\triangleright^-$ in \cref{teo_quasibij} is the solution associated to the right quasi rack $\left(X, \triangleleft \right)$.
\end{rem}

\smallskip

\begin{cor}
Let $(X, \triangleright)$ be a quasi quandle. Then the derived map $r_\triangleright$ is a quasi bijective and quasi non-degenerate solution.
    \begin{proof}
        It is enough to observe that condition \eqref{L0} holds since, for all $x \in X$, we have $L^0_x(x)=x$. Thus, by \cref{teo_quasibij}, the claim follows.
    \end{proof}
\end{cor}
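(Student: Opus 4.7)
The plan is to reduce the statement to \cref{teo_quasibij} by verifying that every quasi quandle automatically satisfies condition \eqref{L0}. Since a quasi quandle is, by definition, a quasi rack with the extra property $L_x(x) = x$ for all $x\in X$, the whole task is to promote this fixed-point property for $L_x$ to the corresponding fixed-point property for $L_x^0$.

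First, I would use the semigroup-inverse identity $L_x L_x^- L_x = L_x$ from \cref{def:quasi-rack}, together with $L_x^0 = L_x L_x^-$, to rewrite this as $L_x^0 L_x = L_x$. Applying both sides to the element $x$ and using $L_x(x) = x$ then yields
\begin{align*}
L_x^0(x) = L_x^0 L_x(x) = L_x L_x^- L_x(x) = L_x(x) = x,
\end{align*}
which is exactly \eqref{L0}. With this verified, \cref{teo_quasibij} immediately applies, so $r_\triangleright$ is a quasi bijective and quasi non-degenerate solution.

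There is essentially no main obstacle here: the entire argument is the one-line observation that the idempotent $L_x^0$ acts as the identity on $x$ whenever $L_x$ does, which is forced by the defining equation $L_x^0 L_x = L_x$ of the relative inverse. Consequently, the statement is a genuine corollary of \cref{teo_quasibij}, with no further structural work required beyond unpacking the definitions.
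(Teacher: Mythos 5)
Your proposal is correct and follows exactly the paper's route: verify that a quasi quandle satisfies condition \eqref{L0} and then invoke \cref{teo_quasibij}. You merely make explicit the one-line step $L_x^0(x)=L_x^0L_x(x)=L_xL_x^-L_x(x)=L_x(x)=x$ that the paper's proof leaves implicit.
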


\smallskip

In what follows, we provide classes of examples of quasi racks satisfying both \eqref{request0} and \eqref{request1} and, in particular, whose associated derived maps are quasi non-degenerate and quasi bijective solutions. Note that condition \eqref{L0} is not satisfied by the second class of examples we provide.

\begin{exs} Let $X$ be a Clifford semigroup.
\begin{enumerate}
    \item Let $(X, \triangleright)$ be the conjugation quasi quandle with $L_y(x)=y^-xy$, for all $x, y \in X$. Then, $L^0_x(x)=x$, for all $x \in X$. Thus, by \cref{teo_quasibij}, the map $$r_{\triangleright}(x, y)=\left( x^0y, y^-xy\right),$$
    for all $x, y \in X$, is a quasi bijective and quasi non-degenerate solution. 
    It is, indeed, the map associated with the trivial weak brace on $X$.
    \item More general, consider the quasi rack $(X, \triangleright)$ 
    given by $L_x(y)=x^{-1}yxe$, for all $x, y \in X$, where $e$ is an idempotent. Then, by \cref{teo-request}, the map
    $$r_{\triangleright}(x, y)=\left( x^0ye, y^-xye\right),$$ 
    for all $x, y \in X$, is a quasi non-degenerate solution.  In particular, this is a \emph{deformed solution} by an idempotent $e \in \E(X)$ and it is also quasi bijective (for more details, see \cite[Theorem 3.10]{MaRySt25}). In this case, $L^0_x(x) \neq x$, so this shows that condition \eqref{L0} is not necessary to ensure that the quasi bijectivity property holds.
    \item Let us consider the core quasi quandle $(X, \triangleright)$, i.e., $L_x(y)=xy^{-}x$, for all $x, y \in X$. Then, $L^0_x(x)=x$, for all $x \in X$. Thus, by \cref{teo_quasibij}, the map $$r_{\triangleright}(x, y)=\left( x^0y, yx^-y\right),$$
    for all $x, y \in X$, is a quasi bijective and quasi non-degenerate solution.
\end{enumerate}   
\end{exs}

\medskip

In \cite{JePi25}, the authors established the significant fact that non-degenerate solutions are bijective. This naturally raises the question of whether analogous results hold for quasi non-degenerate solutions, although addressing this problem presents significant challenges.
\begin{que}
    Is every quasi non-degenerate solution $(X, r)$ also quasi bijective?
\end{que}

\bigskip

To illustrate the diversity of quasi racks, we finish this section with giving the numbers, up to isomorphism, of these structures of size $\leqslant 4$. Note that in general the problem of enumerating all self-distributive structures of size $n$ is challenging even for small values of $n$, see \cite{Je97}. On the other hand, more involved methods have been used in \cite{VoYa19} to compute all racks and quandles of size up to $13$. 
Let us denote by $r(n)$ the number of racks of size $n$ (computed in \cite{VoYa19}), $qr(n)$ the number of quasi racks of size $n$ and $ds(n)$ the number of quasi racks such that the derived map from \cref{def:derived_map} is a solution, up to isomorphism. We also denote by $qr^{(*)}(n)$, $qr^{(**)}(n)$ and $qr^{(***)}(n)$, numbers of solutions, up to isomorphism, satisfying conditions \eqref{ast}, \eqref{astast} and \eqref{astastast}, respectively. The enumeration, obtained using GAP system \cite{GAP4}, appears in Table~\ref{table: enumeration}. 

\smallskip

\begin{table}[hbt!]
\begin{center}
\begin{tabular}{|c|c|c|c|c|c|c|}
\hline
$n$ & $r(n)$ & $qr(n)$ & $ds(n)$ & $qr^{(*)}(n)$ & $qr^{(**)}(n)$ & $qr^{(***)}(n)$ \\ \hline
$n= 2$            &  2   &     5     &       4        &  4 & 4  &   3 \\ \hline
$n= 3$            & 6    & 31          & 20                & 17  & 19   & 13    \\ \hline
$n= 4$            & 19    & 325         & 169               & 90  & 151  &  91   \\ \hline
\end{tabular}
\end{center}
\caption{Enumeration of quasi racks of small order}
\label{table: enumeration}
\end{table}

\medskip

\section{Shelves associated to quasi left-non degenerate solutions}\label{Section3}
In this section, we identify a family of quasi left non-degenerate solutions that give rise to a shelf structure. In this way, the well-known result on left non-degenerate solutions (namely, the converse part of \cref{YBE_and_shelf}) can be situated within a broader framework.

\smallskip

In the series of technical propositions, we introduce three conditions on quasi left non-degenerate solutions, denoted by \eqref{A}, \eqref{B} and \eqref{C}, that are sufficient to obtain the postulated derived shelf structures.

\begin{prop}\label{propA}
  Let $(X, r)$ be a quasi left non-degenerate solution. Then the following identities 
\begin{align}\label{A}
\forall\,x,y\in X\qquad\lambda^0_{\lambda_x(y)}=\lambda_x^0\lambda_y^0 \tag{A}
\end{align}  
and
$$
\forall\,x,y\in X\qquad \quad 
\lambda_{\lambda_x(y)}=\lambda_x\lambda_y\lambda_{\rho_y(x)}^-
$$
are equivalent.
    \begin{proof}
If $x,y \in X$ and condition \eqref{A} holds, then  
    \begin{align*}
     \lambda_{\lambda_x(y)}
&=\lambda_{\lambda_x(y)}\lambda^0_{\lambda_x(y)}=\lambda_{\lambda_x(y)}(\lambda_x\lambda_y)^0\underset{\eqref{first}}{=}\lambda_{\lambda_x(y)}\left(\lambda_{\lambda_x(y)}\lambda_{\rho_y(x)}\right)^0\\
&=\lambda_{\lambda_x(y)}\lambda_{\rho_y(x)}^0=\lambda_{\lambda_x(y)}\lambda_{\rho_y(x)}\lambda^-_{\rho_y(x)}\underset{\eqref{first}}{=}\lambda_x\lambda_y\lambda_{\rho_y(x)}^-.
    \end{align*}
The converse follows easily from \eqref{first}.    
\end{proof}
\end{prop}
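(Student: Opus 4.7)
The plan is to prove each implication by direct computation, using \eqref{first} in the form $\lambda_x\lambda_y = \lambda_{\lambda_x(y)}\lambda_{\rho_y(x)}$ together with the fact, noted after \cref{uniqueness}, that each idempotent $\lambda_z^0$ commutes with every $\lambda_w$, $\lambda_w^-$, and $\lambda_w^0$. From this commutation one checks at once that $(\lambda_x\lambda_y)^0 = \lambda_x^0\lambda_y^0$, with relative inverse $\lambda_y^-\lambda_x^-$, and more generally $(\lambda_x\lambda_y\lambda^-_{\rho_y(x)})^0 = \lambda_x^0\lambda_y^0\lambda^0_{\rho_y(x)}$. Applying the first of these identities to both sides of \eqref{first} produces the auxiliary identity
$$
\lambda_x^0\lambda_y^0 \;=\; \lambda^0_{\lambda_x(y)}\,\lambda^0_{\rho_y(x)},
$$
which will serve as the bridge between the two formulations.

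For the forward direction, assuming (A), I would start from $\lambda_{\lambda_x(y)} = \lambda_{\lambda_x(y)}\lambda^0_{\lambda_x(y)}$, substitute $\lambda^0_{\lambda_x(y)} = \lambda_x^0\lambda_y^0$ via (A), then rewrite $\lambda_x^0\lambda_y^0$ through the auxiliary identity as $\lambda^0_{\lambda_x(y)}\lambda^0_{\rho_y(x)}$, and reabsorb the leading $\lambda^0_{\lambda_x(y)}$-factor. The remaining right-hand factor $\lambda^0_{\rho_y(x)} = \lambda_{\rho_y(x)}\lambda^-_{\rho_y(x)}$ then combines with the preceding $\lambda_{\lambda_x(y)}$ via \eqref{first} to yield $\lambda_x\lambda_y\lambda^-_{\rho_y(x)}$, as required.

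For the converse, I would take the idempotent $(\,\cdot\,)^0$ of both sides of the assumed identity. The preliminary observation gives $\lambda^0_{\lambda_x(y)} = \lambda_x^0\lambda_y^0\lambda^0_{\rho_y(x)}$, and inserting the auxiliary identity into the first two factors collapses the right-hand side to $\lambda^0_{\lambda_x(y)}\lambda^0_{\rho_y(x)}\lambda^0_{\rho_y(x)} = \lambda^0_{\lambda_x(y)}\lambda^0_{\rho_y(x)} = \lambda_x^0\lambda_y^0$, which is (A). I do not anticipate a serious obstacle; the only delicate point is the bookkeeping of which maps commute with which, but this is dictated entirely by the definition of quasi left non-degeneracy and by the uniqueness of relative inverses established in \cref{uniqueness}.
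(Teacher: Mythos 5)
Your argument is correct and follows essentially the same route as the paper's: the forward direction is the identical chain (insert $\lambda^0_{\lambda_x(y)}$, rewrite it via \eqref{A} and the $0$-image of \eqref{first}, absorb the idempotent, expand $\lambda^0_{\rho_y(x)}=\lambda_{\rho_y(x)}\lambda^-_{\rho_y(x)}$, and apply \eqref{first} once more), resting on the same facts $(\lambda_x\lambda_y)^0=\lambda_x^0\lambda_y^0$ and the commutations recorded after \cref{uniqueness}. For the converse the paper only remarks that it ``follows easily from \eqref{first}'', and your computation---taking $(\cdot)^0$ of both sides and collapsing via $\lambda_x^0\lambda_y^0=\lambda^0_{\lambda_x(y)}\lambda^0_{\rho_y(x)}$---is exactly the intended argument.
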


\smallskip

\begin{prop}\label{propB}
  Let $(X, r)$ be a quasi left non-degenerate solution such that 
\begin{align}\label{B}
\rho_y(x)=\lambda^0_{\lambda_x(y)}\rho_{\lambda^0_x(y)}(x), \tag{B}
\end{align}
for all $x,y\in X$.  Then 
\begin{enumerate}
    \item[{\rm 1.}] $\rho_y(x) = \rho_{\lambda^0_x(y)}(x)$,
    \item[{\rm 2.}] $\lambda_x\lambda_y = \lambda_x\lambda_{\lambda^0_x(y)}$, \ 
    $\lambda^0_x\lambda_y = \lambda^0_x\lambda_{\lambda^0_x(y)}$,\ \ and \ \ $\lambda^-_x\lambda_y = \lambda^-_x\lambda_{\lambda^0_x(y)}$,
\end{enumerate} for all $x, y \in X$.
\begin{proof}
  If $x, y \in X$, we obtain \begin{align*}
    \rho_y(x) = \lambda^0_{\lambda_x(y)}\rho_{\lambda^0_x(y)}(x)
    = \lambda^0_{\lambda_x(\lambda^0_x(y))}\rho_{\lambda^0_x(\lambda^0_x(y))}(x) = \rho_{\lambda^0_x(y)}(x),
\end{align*}
thus
\begin{align*}
    \lambda_x\lambda_y 
    \underset{\eqref{first}}{=}
    \lambda_{\lambda_x(y)}\lambda_{\rho_y(x)}
    = \lambda_{\lambda_x(\lambda^0_x(y))}\lambda_{\rho_{\lambda^0_x(y)}(x)}
    \underset{\eqref{first}}{=}
    \lambda_x\lambda_{\lambda^0_x(y)},
\end{align*}
thus, the first identity in $2.$ holds. Moreover, applying the map $\lambda^-_x$ on the left in the last equality, we obtain the other ones.
\end{proof}
\end{prop}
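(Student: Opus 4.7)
The plan is to establish the two assertions in the order they appear, with part~1 providing the key ingredient for part~2. For part~1, I would rely only on two elementary properties of the relative inverses: the idempotency $\lambda^0_x\lambda^0_x = \lambda^0_x$ and the absorption identity $\lambda_x\lambda^0_x = \lambda_x$ (both immediate from $\lambda^0_x = \lambda^-_x\lambda_x = \lambda_x\lambda^-_x$). Substituting $\lambda^0_x(y)$ in place of $y$ in the hypothesis~\eqref{B} collapses both factors on the right-hand side: the inner argument $\lambda^0_x(\lambda^0_x(y))$ reduces to $\lambda^0_x(y)$ by idempotency, while $\lambda_x(\lambda^0_x(y))$ reduces to $\lambda_x(y)$ by absorption, so the $\lambda^0_{\lambda_x(\,\cdot\,)}$ factor is unchanged. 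Comparing the resulting equation with \eqref{B} itself immediately yields $\rho_y(x) = \rho_{\lambda^0_x(y)}(x)$.

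For the first identity of part~2, the natural move is to apply \eqref{first} to each side separately: $\lambda_x\lambda_y = \lambda_{\lambda_x(y)}\lambda_{\rho_y(x)}$ and $\lambda_x\lambda_{\lambda^0_x(y)} = \lambda_{\lambda_x(\lambda^0_x(y))}\lambda_{\rho_{\lambda^0_x(y)}(x)}$. Absorption gives $\lambda_x(\lambda^0_x(y)) = \lambda_x(y)$, and part~1 gives $\rho_{\lambda^0_x(y)}(x) = \rho_y(x)$, so the two right-hand sides literally coincide.

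The remaining two identities of part~2 should follow by left-multiplying the first one by $\lambda^-_x$. A single multiplication produces $\lambda^0_x\lambda_y = \lambda^0_x\lambda_{\lambda^0_x(y)}$, using $\lambda^-_x\lambda_x = \lambda^0_x$; a second multiplication then produces $\lambda^-_x\lambda_y = \lambda^-_x\lambda_{\lambda^0_x(y)}$, using $\lambda^-_x\lambda^0_x = \lambda^-_x$.

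I do not anticipate any serious obstacle: every step is a direct manipulation of the generalized-inverse identities built into \cref{defn-left-right-quasi}, and the commutativity $\lambda^0_x\lambda_y = \lambda_y\lambda^0_x$ is not even invoked. The only mildly delicate point is the sequencing, since the $\rho$-side identity of part~1 is precisely what is needed to make the two applications of \eqref{first} in part~2 line up; once that dependency is recognized, the rest is routine bookkeeping.
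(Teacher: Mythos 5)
Your proposal is correct and follows essentially the same route as the paper: part~1 is obtained by instantiating \eqref{B} at $\lambda^0_x(y)$ and using the idempotency/absorption identities, part~2's first identity by applying \eqref{first} to both sides and matching via part~1 and absorption, and the remaining identities by left-composing with $\lambda^-_x$. No gaps.
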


\smallskip

\begin{prop}\label{propAB}
    Let $(X, r)$ be a quasi left non-degenerate solution such that conditions \eqref{A} and \eqref{B} are satisfied.
 Then the following hold:
\begin{enumerate}
    \item[{\rm 1.}] $\lambda_{\lambda^0_x(y)} = \lambda^0_x\lambda_y$ \ \ and \ \
    $\lambda^0_{\lambda^0_x(y)} = \lambda^0_x\lambda^0_y$,
    \item[{\rm 2.}] $\lambda_{\rho_y(x)}=\lambda^-_{\lambda_{x}(y)}\lambda_x\lambda_y $,
    \item[{\rm 3.}] $\lambda^-_{\lambda_x(y)}\lambda_x=\lambda_{\rho_y(x)}\lambda_y^-$,
    \item[{\rm 4.}] $\lambda_{\rho_{\lambda^0_z(y)}(x)}=\lambda_{\rho_{y}(x)}\lambda^0_{\lambda^0_z(y)}$, 
    \item[{\rm 5.}] $\lambda_{\rho_{\lambda^0_z(y)}(x)}\rho_{\lambda^-_z(y)}(z)
     =  \lambda_{\rho_{y}(x)}\rho_{\lambda^-_z(y)}(z)$,
         \item[{\rm 6.}]$\lambda^0_{\lambda^-_x(y)}\lambda^-_x = \lambda^0_{\lambda^0_x(y)}\lambda^-_x$,
\end{enumerate}
for all $x,y,z\in X$.
\end{prop}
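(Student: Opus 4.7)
The strategy is to prove the six items by systematically exploiting three facts: (i) $\lambda^{0}_{x}$ is a central idempotent that commutes with every $\lambda_{w}$, $\lambda^{-}_{w}$, and $\lambda^{0}_{w}$, and satisfies $\lambda_{x}\lambda^{0}_{x}=\lambda_{x}$, $\lambda^{0}_{x}\lambda_{x}=\lambda_{x}$, $\lambda^{0}_{x}\lambda^{-}_{x}=\lambda^{-}_{x}$; (ii) the operator form of (A) provided by \cref{propA}, namely $\lambda_{\lambda_{x}(y)}=\lambda_{x}\lambda_{y}\lambda^{-}_{\rho_{y}(x)}$; and (iii) the \emph{range-containment principle}: if $w\in\im(\lambda^{0}_{v})$, then $\im(\lambda_{w})\subseteq\im(\lambda^{0}_{v})$. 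The principle follows directly from the equivalent form of (A): writing $w=\lambda_{v}(\lambda^{-}_{v}(w))$ and substituting into \cref{propA} gives $\lambda_{w}=\lambda_{v}\lambda_{\lambda^{-}_{v}(w)}\lambda^{-}_{\rho_{\lambda^{-}_{v}(w)}(v)}$, so $\im(\lambda_{w})\subseteq\im(\lambda_{v})\subseteq\im(\lambda^{0}_{v})$, the last inclusion holding since $\lambda^{0}_{v}\lambda_{v}=\lambda_{v}$.

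Items~6 and 1 need only (A). Substituting $y\mapsto\lambda^{-}_{x}(y)$ in (A) yields $\lambda^{0}_{\lambda^{0}_{x}(y)}=\lambda^{0}_{x}\lambda^{0}_{\lambda^{-}_{x}(y)}$; post-multiplying by $\lambda^{-}_{x}$ and using $\lambda^{0}_{x}\lambda^{-}_{x}=\lambda^{-}_{x}$ together with idempotent commutativity gives item~6, and a small rearrangement (absorbing one $\lambda^{0}_{x}$) yields the second equality of item~1, $\lambda^{0}_{\lambda^{0}_{x}(y)}=\lambda^{0}_{x}\lambda^{0}_{y}$. The first equality of item~1 then follows from \cref{propB}~(2.), which gives $\lambda^{0}_{x}\lambda_{y}=\lambda^{0}_{x}\lambda_{\lambda^{0}_{x}(y)}$; invoking the range-containment principle at $w=\lambda^{0}_{x}(y)\in\im(\lambda^{0}_{x})$ reduces the right-hand side to $\lambda_{\lambda^{0}_{x}(y)}$, completing the identity.

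Item~2 comes from (Y1) and (A): the computation $\lambda^{-}_{\lambda_{x}(y)}\lambda_{x}\lambda_{y}=\lambda^{-}_{\lambda_{x}(y)}\lambda_{\lambda_{x}(y)}\lambda_{\rho_{y}(x)}=\lambda^{0}_{\lambda_{x}(y)}\lambda_{\rho_{y}(x)}$ reduces, via the range-containment principle applied at $w=\rho_{y}(x)\in\im(\lambda^{0}_{\lambda_{x}(y)})$ (granted by (B) together with (A)), to $\lambda_{\rho_{y}(x)}$. Item~3 then follows by right-multiplying item~2 by $\lambda^{-}_{y}$: one obtains $\lambda_{\rho_{y}(x)}\lambda^{-}_{y}=\lambda^{-}_{\lambda_{x}(y)}\lambda_{x}\lambda^{0}_{y}$, and since $\im(\lambda^{-}_{\lambda_{x}(y)}\lambda_{x})\subseteq\im(\lambda^{-}_{\lambda_{x}(y)})\subseteq\im(\lambda^{0}_{\lambda_{x}(y)})=\im(\lambda^{0}_{x}\lambda^{0}_{y})\subseteq\im(\lambda^{0}_{y})$, the idempotent $\lambda^{0}_{y}$ acts as identity and can be removed.

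Finally, for items~4 and 5, apply item~2 with $y\mapsto\lambda^{0}_{z}(y)$; item~1 (first equality) rewrites $\lambda_{\lambda^{0}_{z}(y)}=\lambda^{0}_{z}\lambda_{y}$, and the relative-inverse identity $(\lambda^{0}_{z}\lambda_{w})^{-}=\lambda^{0}_{z}\lambda^{-}_{w}$ (valid since $\lambda^{0}_{z}$ is a central idempotent commuting with $\lambda_{w}$) gives $\lambda^{-}_{\lambda_{x}(\lambda^{0}_{z}(y))}=\lambda^{0}_{z}\lambda^{-}_{\lambda_{x}(y)}$. After commuting $\lambda^{0}_{z}$ past the remaining maps, the expression collapses to $\lambda^{0}_{z}\lambda_{\rho_{y}(x)}$, which by the range-containment principle at $\rho_{y}(x)\in\im(\lambda^{0}_{y})$ (also from (B)) equals $\lambda^{0}_{z}\lambda^{0}_{y}\lambda_{\rho_{y}(x)}=\lambda^{0}_{\lambda^{0}_{z}(y)}\lambda_{\rho_{y}(x)}=\lambda_{\rho_{y}(x)}\lambda^{0}_{\lambda^{0}_{z}(y)}$, proving item~4. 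Item~5 is the pointwise evaluation of item~4 at $\rho_{\lambda^{-}_{z}(y)}(z)$: applying (B) at the pair $(z,\lambda^{-}_{z}(y))$ shows this element is fixed by $\lambda^{0}_{\lambda^{0}_{z}(y)}$, and the desired equality follows. The main conceptual hurdle is recognizing the range-containment principle as the unifying tool that makes all subsequent arguments work cleanly.
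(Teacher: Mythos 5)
Your proof is correct, and at its core it is the paper's argument reorganized around one explicitly named observation: your ``range-containment principle'' (if $w\in\im(\lambda^0_v)$ then $\lambda^0_v\lambda_w=\lambda_w$, obtained from the operator form of \eqref{A} in \cref{propA}) is exactly the absorption step the paper performs inline, e.g.\ when it writes $\lambda_{\lambda^0_x(y)}=\lambda_x\lambda_{\lambda^-_x(y)}\lambda^-_{\rho_{\lambda^-_x(y)}(x)}=\lambda^0_x\lambda_{\lambda^0_x(y)}$ in item~1, or $\lambda_{\rho_y(x)}=\lambda^0_{\lambda_x(y)}\lambda_{\rho_y(x)}$ in item~2; items 1--5 therefore match the paper's proof essentially step for step, up to the direction in which the chains are read. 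The one place where you genuinely deviate is item~6: you obtain it from \eqref{A} alone, by substituting $y\mapsto\lambda^-_x(y)$ to get $\lambda^0_{\lambda^0_x(y)}=\lambda^0_x\lambda^0_{\lambda^-_x(y)}$ and then absorbing $\lambda^0_x$ into $\lambda^-_x$ after commuting the idempotents, whereas the paper routes item~6 through \eqref{first}, item~3, and the inverse-of-a-product identity; your derivation is shorter and uses strictly weaker inputs. Swapping the order (proving the second identity of item~1 and item~6 before the first identity of item~1) introduces no circularity, since the first identity of item~1 then follows from \cref{propB}(2) together with the range-containment principle, exactly as you say.
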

\begin{proof}
    Let $x,y,z\in X$. Notice that  
\begin{align*}
    \lambda_{\lambda^0_x(y)}
    &= \lambda_{\lambda_x(\lambda^-_x(y))}
    \underset{\ref{propA}}{=} \lambda_x\lambda_{\lambda^-_x(y)}\lambda^-_{\rho_{\lambda^-_x(y)}(x)}
    =\lambda^0_x\lambda_x\lambda_{\lambda^-_x(y)}\lambda^-_{\rho_{\lambda^-_x(y)}(x)}\\
    &\underset{\ref{propA}}{=} \lambda^0_x \lambda_{\lambda^0_x(y)}
    \underset{\ref{propB}-2.}{=} \lambda^0_x\lambda_y,
\end{align*}
and hence 
\begin{align*}
 \lambda^0_{\lambda^0_x(y)}   
 = \lambda^-_{\lambda^0_x(y)} \lambda^{}_{\lambda^0_x(y)} 
 = \lambda^-_y\lambda_x^0\lambda_x^0\lambda_y
 = \lambda_x^0\lambda_y^0.
\end{align*}
Moreover, 
    \begin{align*}
        \lambda_{\rho_y(x)}
        =
        \lambda_{\lambda^0_{\lambda_x(y)}\rho_{\lambda^0_x(y)}(x)}
        \underset{1.}{=} 
        \lambda^0_{\lambda_x(y)}\lambda_{\rho_{\lambda^0_x(y)}(x)}
        \underset{\ref{propB}-1.}{=}     \lambda^0_{\lambda_x(y)}\lambda_{\rho_y(x)}
        \underset{\eqref{first}}{=} 
        \lambda^-_{\lambda_x(y)}\lambda_x\lambda_y.
    \end{align*}
It follows that
\begin{align*}
    \lambda^-_{\lambda_x(y)}\lambda_x=   \lambda^-_{\lambda_x(y)}\lambda^0_{\lambda_x(y)}\lambda_x=\lambda^-_{\lambda_x(y)}\lambda^0_x\lambda_y^0\lambda_x=\lambda^-_{\lambda_x(y)}\lambda_x\lambda_y\lambda_y^-=\lambda_{\rho_y(x)}\lambda_y^-.
\end{align*}
Besides, we obtain that
\begin{align*}
\lambda_{\rho_{\lambda^0_z(y)}(x)}
&= \lambda_{\lambda^0_{\lambda_x\lambda^0_z(y)}\rho_{\lambda^0_x\lambda^0_z(y)}(x)}
\underset{\ref{propB}-1.}{=} 
\lambda_{\lambda^0_{\lambda_x\lambda^0_z(y)}\rho_{\lambda^0_z(y)}(x)}
\underset{1.}{=}
\lambda^0_{\lambda_x\lambda^0_z(y)}\lambda_{\rho_{\lambda^0_z(y)}(x)}\\
&= \lambda^-_{\lambda_x\lambda^0_z(y)}\lambda_{\lambda_x\lambda^0_z(y)}\lambda_{\rho_{\lambda^0_z(y)}(x)}\underset{\eqref{first}}{=}
\lambda^-_{\lambda_x\lambda^0_z(y)}\lambda_x\lambda_{\lambda^0_z(y)}=\lambda^-_{\lambda^0_z\lambda_x(y)}\lambda_x\lambda_{\lambda^0_z(y)}\\
&\underset{1.}{=}\lambda^0_z\lambda^-_{\lambda_x(y)}\lambda_x\lambda_{\lambda^0_z(y)}
\underset{3.}{=}
\lambda_{\rho_y(x)}\lambda^-_y\lambda^0_z\lambda_{\lambda^0_z(y)}\underset{1.}{=}\lambda_{\rho_y(x)}\lambda^-_y\lambda^0_z\lambda_y
= \lambda_{\rho_y(x)}\lambda^0_z\lambda^0_y\\
&\underset{1.}{=}\lambda_{\rho_y(x)}\lambda^0_{\lambda^0_z(y)},
\end{align*}
and hence
\begin{align*}
       \lambda_{\rho_{\lambda^0_z(y)}(x)}\rho_{\lambda^-_z(y)}(z)=  \lambda_{\rho_{y}(x)}\lambda^0_{\lambda^0_z(y)}\rho_{\lambda^-_z(y)}(z)
       = \lambda_{\rho_{y}(x)}\rho_{\lambda^-_z(y)}(z).
    \end{align*}
Finally, \begin{align*}
       \lambda^0_{\lambda^-_x(y)}\lambda^-_x 
       &= \lambda_{\lambda^-_x(y)}\big(\lambda_x \lambda_{\lambda^-_x(y)}\big)^-
       \underset{\eqref{first}}{=}\lambda_{\lambda^-_x(y)}\big(
       \lambda_{\lambda^0_x(y)}
       \lambda_{\rho_{\lambda^-_x(y)}(x)}\big)^-
       \underset{3.}{=} \big(\lambda^-_{\lambda^0_x(y)}\lambda_x\big)^-\lambda^-_{\lambda^0_x(y)}\\&= \lambda^-_x\lambda^0_{\lambda^0_x(y)},
    \end{align*}
hence $6.$ holds. 
Therefore, the claim follows.

\end{proof}

\smallskip
\begin{rem}\label{conseq.Y1}
  Note that in the proofs of  
  Propositions \ref{propA}, \ref{propB} and \ref{propAB}
  only equation \eqref{first} is used.
\end{rem}

\smallskip

\begin{prop}\label{lem-lambda0-2}
     Let $(X, r)$ be a quasi left non-degenerate solution satisfying conditions \eqref{A} and \eqref{B} and such that 
     \begin{align}\label{C}
         \lambda^0_x\rho_y = \rho_y\lambda_x^0, \tag{C}
     \end{align}
     for all $x,y\in X.$ Then, the following are satisfied:
     \begin{enumerate}
         \item[{\rm 1.}] $\rho_{\lambda^0_y\lambda^-_z(x)}(z) = \lambda^0_y\rho_{\lambda^-_z(x)}(z)$, 
         \item[{\rm 2.}] $\lambda_{\rho_{\lambda^-_y(x)}(y)}\lambda^0_y = \lambda_{\rho_{\lambda^-_y(x)}(y)}$,
         \item[{\rm 3.}] $\lambda_y\lambda_{\rho_{\lambda^-_x(y)}(x)}=\lambda_x\lambda_{\lambda^-_{x}(y)}$,
     \end{enumerate}
     for all $x,y,z\in X$.
\end{prop}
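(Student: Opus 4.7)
The plan is to prove the three items by direct algebraic manipulation, exploiting the identities already established in Propositions~\ref{propA}, \ref{propB}, and \ref{propAB}, the commutativity of the idempotents $\lambda^0_x$, and---only for item~1---the hypothesis \eqref{C}. A useful auxiliary observation I would first record is that $\lambda^-_{\lambda^0_x(y)} = \lambda^-_y\lambda^0_x$ for all $x,y\in X$: since Proposition~\ref{propAB}-1 gives $\lambda_{\lambda^0_x(y)} = \lambda^0_x\lambda_y$, this follows from a direct check that $\lambda^-_y\lambda^0_x$ satisfies the three defining conditions of a relative inverse of $\lambda^0_x\lambda_y$ in $X^X$, combined with the uniqueness provided by Lemma~\ref{uniqueness}.

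For item~2, I would substitute $x\mapsto y$ and $y\mapsto \lambda^-_y(x)$ into Proposition~\ref{propAB}-2 to expand $\lambda_{\rho_{\lambda^-_y(x)}(y)}$ as $\lambda^-_{\lambda^0_y(x)}\lambda_y\lambda_{\lambda^-_y(x)}$; multiplying on the right by $\lambda^0_y$ and commuting it past $\lambda_{\lambda^-_y(x)}$, the factor is absorbed via $\lambda_y\lambda^0_y=\lambda_y$. Note that \eqref{C} is not needed here. For item~3, I would similarly use Proposition~\ref{propAB}-2 together with the auxiliary fact above to rewrite $\lambda_y\lambda_{\rho_{\lambda^-_x(y)}(x)}$ as $\lambda^0_y\lambda_x\lambda_{\lambda^-_x(y)}$, and then apply \eqref{first} together with Proposition~\ref{propAB}-1 to rewrite $\lambda_x\lambda_{\lambda^-_x(y)}$ as $\lambda^0_x\lambda_y\lambda_{\rho_{\lambda^-_x(y)}(x)}$. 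Substituting the first expression into the second and iterating, the commutativity of idempotents together with $\lambda^0_x\lambda_x=\lambda_x$ collapses everything to the desired equality $\lambda_y\lambda_{\rho_{\lambda^-_x(y)}(x)}=\lambda_x\lambda_{\lambda^-_x(y)}$.

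The most delicate step will be item~1. My plan is to apply \eqref{B} to both sides. On the left, $\rho_{\lambda^0_y\lambda^-_z(x)}(z)$ unfolds via \eqref{B} and simplifies (using $\lambda_z\lambda^-_z=\lambda^0_z$, $\lambda^0_z\lambda^-_z=\lambda^-_z$, commutativity of idempotents, and Proposition~\ref{propAB}-1) to $\lambda^0_y\lambda^0_z\lambda^0_x\,\rho_{\lambda^0_y\lambda^-_z(x)}(z)$. On the right, I would first invoke \eqref{C} to rewrite $\lambda^0_y\rho_{\lambda^-_z(x)}(z)$ as $\rho_{\lambda^-_z(x)}\lambda^0_y(z)$, then apply \eqref{B} again with argument $\lambda^0_y(z)$ and use Proposition~\ref{propAB}-1 to reach the same idempotent prefix $\lambda^0_y\lambda^0_z\lambda^0_x$, and finally use \eqref{C} once more to push the remaining inner $\lambda^0_y$ through $\rho_{\lambda^0_y\lambda^-_z(x)}$ and absorb it via $\lambda^0_y\lambda^0_y=\lambda^0_y$. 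The main obstacle is precisely this bookkeeping: tracking the two applications of \eqref{B} and the two uses of \eqref{C} while ensuring that all idempotent factors collapse correctly; however, no new ingredient beyond the machinery already developed is needed.
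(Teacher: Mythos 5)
Your proposal is correct and follows essentially the same route as the paper: direct manipulation using the identities of Propositions \ref{propA}--\ref{propAB}, the commuting idempotents $\lambda^0_x$, and condition \eqref{C} only in item 1 (where both arguments reduce each side to the common expression $\lambda^0_y\lambda^0_z\lambda^0_x\rho_{\lambda^0_y\lambda^-_z(x)}(z)$). The only cosmetic differences are that you invoke Proposition \ref{propAB}-2 together with the auxiliary identity $\lambda^-_{\lambda^0_x(y)}=\lambda^-_y\lambda^0_x$ (which the paper already uses implicitly when proving Proposition \ref{propAB}-1) where the paper instead uses Proposition \ref{propAB}-4 for item 2 and, for item 3, item 2 with the roles of $x$ and $y$ exchanged; the computations are equivalent.
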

\begin{proof}
    Let $x,y,z\in X$. Then
    \begin{align*}
    \rho_{\lambda^0_y\lambda^-_z(x)}(z) 
    &= \lambda^0_{\lambda^0_y\lambda^0_z(x)}\rho_{\lambda^0_y\lambda^-_z(x)}(z)
    \underset{\ref{propAB}-1.}{=} \lambda^0_y
    \lambda^0_{\lambda^0_z(x)}
    \rho_{\lambda^0_y\lambda^0_z\lambda^-_z(x)}(z)\\
    &\underset{\ref{propAB}-1.}{=}\lambda^0_y
    \lambda^0_{\lambda^0_z(x)}
    \rho_{\lambda^0_{\lambda^0_y(z)}\lambda^-_z(x)}(z)= \lambda^0_{\lambda^0_z(x)}\rho_{\lambda^0_{\lambda^0_y(z)}\lambda^-_z(x)}\lambda^0_y(z)\\
    &\underset{\ref{propB}-1.}{=} \lambda^0_{\lambda^0_z(x)} \rho_{\lambda^-_z(x)}\lambda^0_y(z)
    = \lambda^0_y \lambda^0_{\lambda^0_z(x)}\rho_{\lambda^-_z(x)}(z)      = \lambda^0_y\rho_{\lambda^-_z(x)}.
    \end{align*}
    Moreover, 
   we obtain
\begin{align*}
        \lambda_{\rho_{\lambda^-_y(x)}(y)}\lambda^0_y
        \underset{\ref{propAB}-4.}{=} \lambda_{\rho_{\lambda^-_y(x)}(y)}\lambda^0_{\lambda^-_y(x)}\lambda^0_y
        \underset{\ref{propAB}-1.}{=}
        \lambda_{\rho_{\lambda^-_y(x)}(y)}\lambda^0_{\lambda^-_y(x)}
        \underset{\ref{propAB}-4.}{=} \lambda_{\rho_{\lambda^-_y(x)}(y)}.
    \end{align*}
 From the last equality, exchanging the role of $x$ and $y$, we get
    \begin{align*}
        \lambda_y\lambda_{\rho_{\lambda^-_x(y)}(x)}&=\lambda^0_x \lambda_y\lambda_{\rho_{\lambda^-_x(y)}(x)}\underset{\ref{propAB}-1.}{=}  \lambda_{\lambda^0_x(y)}\lambda_{\rho_{\lambda_x^-(y)}(x)}
        \underset{\eqref{first}}{=}\lambda_x\lambda_{\lambda^-_{x}(y)},
    \end{align*} 
    which completes the proof.
\end{proof}

\smallskip

\begin{lemma}\label{prop-lambdahom}
       Let $(X, r)$ be a quasi left non-degenerate solution satisfying conditions \eqref{A} and \eqref{B}.  Set $x \, \triangleright_r \, y:= \lambda_x\rho_{\lambda^{-}_y(x)}(y)$,
    for all $x, y \in X$, then the following hold:
       \begin{enumerate}
       \item[{\rm 1.}] $\rho_y(x)=\lambda^-_{\lambda_x(y)}\left(\lambda_x(y) \triangleright_r x\right)$, for all $x,y \in X$,
           \item[{\rm 2.}] the maps $\lambda_x, \lambda^-_x$, and $\lambda_x^0$ are homomorphisms of the magma $(X, \triangleright_r)$.
       \end{enumerate}
\begin{proof}
Initially, let us observe that $1.$ follows directly from the condition \eqref{B}. Now, if $a, b \in X$, then
    \begin{align*}
        \lambda_x\left(a \triangleright_r b\right)&=\lambda_x\lambda_a\rho_{\lambda^-_b(a)}(b)\underset{\eqref{first}}{=}\lambda_{\lambda_x(a)}\lambda_{\rho_a(x)}\rho_{\lambda^-_b(a)}(b)
        \underset{\ref{propAB}-5.}{=}\lambda_{\lambda_x(a)}\lambda_{\rho_{\lambda^0_b(a)}(x)}\rho_{\lambda^-_b(a)}(b)\\
        &\underset{\eqref{second}}{=}\lambda_{\lambda_x(a)}\rho_{\lambda_{\rho_b(x)}\lambda_b^-(a)}\lambda_x(b)\underset{\ref{propAB}-3.}{=} \lambda_{\lambda_x(a)}\rho_{\lambda^-_{\lambda_x(b)}\lambda_x(a)}\lambda_x(b)=\lambda_x(a) \triangleright_r \lambda_x(b),
    \end{align*}
    for all $x \in X$. In addition,
\begin{align*}
    \lambda^0_x(a)\triangleright_r\lambda^0_x(b)
    &= \underbrace{\lambda^0_x\lambda_a}_{\ref{propAB}-1.}
    \rho_{\lambda^-_{\lambda^0_x(b)}\lambda^0_x(a)}\lambda_x^0(b)
    =\lambda^0_x\lambda_a
    \underbrace{\rho_{\lambda^0_{\lambda^0_x(b)}\lambda^-_b(a)}}_{\ref{propB}-2.\,\&\ \ref{propAB}-1.}\lambda_x^0(b)\\
    &=
    \lambda^0_x\lambda_a
    \rho_{\lambda^-_b(a)}(b)
    = \lambda^0_x(a\triangleright_r b),
\end{align*}
for all $x \in X$. Then, it follows that
\begin{align*}
    \lambda^-_x(a)\triangleright_r \lambda^-_x(b)
    &= \lambda_x^0\lambda^-_x(a)\triangleright_r \lambda_x^0\lambda^-_x(b)
    = \lambda_x^0\left(\lambda^-_x(a)\triangleright_r \lambda^-_x(b)\right)
    = \lambda^-_x\left(\lambda^{}_x\lambda_x^-(a)\triangleright_r \lambda^{}_x\lambda_x^-(b)\right)\\[0.2cm]
    &= \lambda^-_x\lambda^0_x\left(a\triangleright_r b\right)
    = \lambda^-_x\left(a\triangleright_r b\right),
\end{align*}
for all $x \in X$. Therefore, the assertion holds.  
\end{proof}
\end{lemma}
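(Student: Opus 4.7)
The plan is to handle part~(1) as a direct application of condition (B), and then establish part~(2) in the order $\lambda_x \to \lambda^0_x \to \lambda^-_x$, so that each case is reduced to the previous ones via the idempotent structure of the $\lambda^0$'s.

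For part~(1), I would simply unfold the definition of $\triangleright_r$: $\lambda_x(y) \triangleright_r x = \lambda_{\lambda_x(y)}\rho_{\lambda^-_x\lambda_x(y)}(x) = \lambda_{\lambda_x(y)}\rho_{\lambda^0_x(y)}(x)$ since $\lambda^-_x\lambda_x = \lambda^0_x$. Multiplying on the left by $\lambda^-_{\lambda_x(y)}$ collapses the leading $\lambda$-composition into $\lambda^0_{\lambda_x(y)}$, and condition (B) identifies the resulting expression as exactly $\rho_y(x)$.

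For the $\lambda_x$-homomorphism identity in part~(2), I would expand both sides and bridge them in four moves. Starting from $\lambda_x(a\triangleright_r b) = \lambda_x\lambda_a\rho_{\lambda^-_b(a)}(b)$: apply (Y1) to split $\lambda_x\lambda_a = \lambda_{\lambda_x(a)}\lambda_{\rho_a(x)}$; insert the idempotent $\lambda^0_b(a)$ into the subscript via Proposition~\ref{propAB}-5 to replace $\lambda_{\rho_a(x)}\rho_{\lambda^-_b(a)}(b)$ by $\lambda_{\rho_{\lambda^0_b(a)}(x)}\rho_{\lambda^-_b(a)}(b)$; apply (Y2) with $y\to b,\ z\to \lambda^-_b(a)$ (so that $\lambda_b\lambda^-_b(a) = \lambda^0_b(a)$ plays the role of $\lambda_y(z)$) to transform this into $\rho_{\lambda_{\rho_b(x)}\lambda^-_b(a)}\lambda_x(b)$; finally, invoke Proposition~\ref{propAB}-3 to rewrite $\lambda_{\rho_b(x)}\lambda^-_b$ as $\lambda^-_{\lambda_x(b)}\lambda_x$, which produces exactly $\lambda_x(a)\triangleright_r\lambda_x(b)$.

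The $\lambda^0_x$-case rests on the preparatory identity $\lambda^-_{\lambda^0_x(b)}\lambda^0_x(a) = \lambda^0_x\lambda^-_b(a)$, which follows from Proposition~\ref{propAB}-1 together with commutativity of the central idempotents $\lambda^0$. Coupling this with Proposition~\ref{lem-lambda0-2}-1 and condition (C), I can pull $\lambda^0_x$ outside of both the subscript and the argument of the $\rho$ appearing in $\lambda^0_x(a)\triangleright_r\lambda^0_x(b)$, and after a short absorption ($\lambda^0_x\lambda_a\lambda^0_x = \lambda^0_x\lambda_a$, which holds because $\lambda^0_x$ commutes with every $\lambda_y$) I recover $\lambda^0_x(a\triangleright_r b)$. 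The $\lambda^-_x$-identity then drops out by composition: applying the $\lambda_x$-case to the pair $(\lambda^-_x(a), \lambda^-_x(b))$ yields $\lambda_x\bigl(\lambda^-_x(a)\triangleright_r \lambda^-_x(b)\bigr) = \lambda^0_x(a)\triangleright_r\lambda^0_x(b) = \lambda^0_x(a\triangleright_r b)$; composing with $\lambda^-_x$ on the left and using $\lambda^-_x\lambda^0_x = \lambda^-_x$ turns the right-hand side into $\lambda^-_x(a\triangleright_r b)$, while the left-hand side collapses to $\lambda^-_x(a)\triangleright_r \lambda^-_x(b)$ via the already-established $\lambda^0_x$-homomorphism property.

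The main obstacle is the $\lambda_x$-case, specifically identifying the correct substitution in (Y2) to align the $\rho$-subscripts and recognizing that the preparatory insertion of $\lambda^0_b(a)$ via Proposition~\ref{propAB}-5 is necessary to make (Y2) applicable. Once this identity is in hand, the remaining two cases are essentially bookkeeping with the absorption relations $\lambda^-_x\lambda^0_x = \lambda^-_x = \lambda^0_x\lambda^-_x$ and the commutativity of the idempotents $\lambda^0$ with the $\lambda$- and $\rho$-maps.
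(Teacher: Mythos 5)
Your handling of part~1, of the $\lambda_x$-case, and of the $\lambda^-_x$-case matches the paper's proof essentially step for step: the same chain \eqref{first} $\to$ \cref{propAB}-5 $\to$ \eqref{second} $\to$ \cref{propAB}-3 for $\lambda_x$, and the same reduction of the $\lambda^-_x$-identity to the $\lambda_x$- and $\lambda^0_x$-cases via the absorptions $\lambda^-_x\lambda^0_x=\lambda^-_x$ and $\lambda^-_x\lambda_x=\lambda^0_x$.

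The genuine problem is your $\lambda^0_x$-case. The lemma assumes only \eqref{A} and \eqref{B}, yet you invoke condition \eqref{C} together with \cref{lem-lambda0-2}-1, whose hypotheses explicitly include \eqref{C}. The commutation $\lambda^0_x\rho_y=\rho_y\lambda^0_x$ that you use to pull $\lambda^0_x$ out of the \emph{argument} of the $\rho$-map is exactly the additional axiom \eqref{C}, and it is not granted here; so as written your argument only establishes the lemma under the extra hypothesis \eqref{C}, i.e.\ a strictly weaker statement than the one claimed. The paper's proof of this step stays inside \eqref{A} and \eqref{B}: after the (correct) preparatory identity $\lambda^-_{\lambda^0_x(b)}\lambda^0_x(a)=\lambda^0_{\lambda^0_x(b)}\lambda^-_b(a)$, it eliminates the idempotent from the subscript and the argument of $\rho$ using only \cref{propB}-1, \cref{propB}-2 and \cref{propAB}-1, which are consequences of \eqref{first}, \eqref{A} and \eqref{B} alone, and never commutes a $\lambda^0$ past a $\rho$. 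To repair your proof you must replace the appeal to \eqref{C} and \cref{lem-lambda0-2} by a manipulation of this kind (or else weaken the statement you are proving, which would no longer be the lemma as stated).
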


\smallskip
\begin{rem}\label{conseq.prop-lambdahom}
  Note that in the proof of \cref{prop-lambdahom} only 
  equation \eqref{first} is used.
\end{rem}

We are now in a position to prove the main theorem of the present section.
\smallskip
\begin{theor}\label{theo-shelf-special-lnd-sol}
   Let $(X, r)$ be a quasi left non-degenerate solution such that 
\begin{align}
\lambda^0_{\lambda_x(y)}&=\lambda_x^0\lambda_y^0,\tag{A} \label{eq:lambda-idemp}\\
\rho_y(x)&=\lambda^0_{\lambda_x(y)}\rho_{\lambda^0_x(y)}(x), \tag{B} \label{eq:rho-idemp} \\
\lambda^0_x\rho_y &= \rho_y\lambda^0_x, \tag{C}\label{eq:comm-lambda-rho}
\end{align}
for all $x,y \in X$.  Set $x \, \triangleright_r \, y:= \lambda_x\rho_{\lambda^{-}_y(x)}(y)$
    for all $x, y \in X$. Then the structure $(X, \triangleright_r)$ is a shelf.
\begin{proof}
For $x, y, z \in X$, we have
\begin{align*}
            (x\triangleright_r y)&\triangleright_r(x\triangleright_r z) = \lambda_{\lambda_x\rho_{\lambda^{-}_y(x)}(y)}
            \rho_{\lambda^{-}_{\lambda_x\rho_{\lambda^{-}_z(x)}(z)}\lambda_x\rho_{\lambda^{-}_y(x)}(y)}\lambda_x\rho_{\lambda^{-}_z(x)}(z)\\
            &=\lambda_{\lambda_x\rho_{\lambda^{-}_y(x)}(y)}\underbrace{\rho_{\lambda_{\rho_{\rho_{\lambda_z^-(x)}(z)}(x)}\lambda^-_{\rho_{\lambda^-_z(x)}(z)}\rho_{\lambda^{-}_y(x)}(y)}}_{\text{by \ref{propAB}-3.}}\lambda_x\rho_{\lambda^-_z(x)}(z)\\
            &=\lambda_{\lambda_x\rho_{\lambda^{-}_y(x)}(y)} \underbrace{\lambda_{\rho_{\lambda_{\rho_{\lambda^-_z(x)}(z)}^0\rho_{\lambda^-_y(x)}(y)}(x)}\rho_{\lambda^-_{\rho_{\lambda^-_z(x)}(z)}\rho_{\lambda^-_y(x)}(y)}\rho_{\lambda^-_z(x)}(z)}_{\text{by} \ \eqref{second}}\\
            &=\lambda_{\lambda_x\rho_{\lambda^{-}_y(x)}(y)} \underbrace{\lambda_{\rho_{\rho_{\lambda^-_y(x)}(y)}(x)}\rho_{\lambda^-_{\rho_{\lambda^-_z(x)}(z)}\rho_{\lambda^-_y(x)}(y)}\rho_{\lambda^-_z(x)}(z)}_{\text{by \ref{propAB}-5.}}\\
            &= \underbrace{\lambda_x\lambda_{\rho_{\lambda^-_y(x)}(y)}}_{\text{by \eqref{first}}}\rho_{\lambda^-_{\rho_{\lambda^-_z(x)}(z)}\rho_{\lambda^-_y(x)}(y)}\rho_{\lambda^-_z(x)}(z)
        \end{align*}       
and
        \begin{align*}
            x \triangleright_r (y \triangleright_r z)
            &=\lambda_x
            \rho_{\lambda^-_{\lambda_y\rho_{\lambda^-_z(y)}(z)}(x)}\lambda_y\rho_{\lambda^-_z(y)}(z)
            =\lambda_x
            \underbrace{\rho_{\lambda_{\rho_{\rho_{\lambda^-_z(y)}(z)}(y)}\lambda^-_{\rho_{\lambda^-_z(y)}(z)}\lambda^-_y(x)}}_{\text{by \ref{propA}}}\lambda_y\rho_{\lambda^-_z(y)}(z)
            \\
            &= \lambda_x
            \underbrace{\lambda_{\rho_{\lambda^0_{\rho_{\lambda^-_z(y)}(z)}\lambda^-_y(x)}(y)}\rho_{\lambda^-_{\rho_{\lambda^-_z(y)}(z)}\lambda^-_y(x)}\rho_{\lambda^-_z(y)}(z)
            }_{\text{by \eqref{second}}}\\
            &= \lambda_x
            \underbrace{\lambda_{\rho_{\lambda^-_y(x)}(y)}\rho_{\lambda^-_{\rho_{\lambda^-_z(y)}(z)}\lambda^-_y(x)}\rho_{\lambda^-_z(y)}(z)
            }_{\text{by \ref{propAB}-5.}}.
        \end{align*}  
        Now notice that
        \begin{align*}
            \rho&_{\lambda^-_{\rho_{\lambda^-_z(x)}(z)}\rho_{\lambda^-_y(x)}(y)}\rho_{\lambda^-_z(x)}(z)=\rho_{\underbrace{\lambda^-_{\lambda^-_z(x)}\lambda^-_z\lambda_{\lambda^0_z(x)}}_{\text{by \ref{propAB}-2.}}\rho_{\lambda^-_y(x)}(y)}\rho_{\lambda^-_z(x)}(z)\\
            &=\rho_{\lambda^-_{\lambda^-_z(x)}\underbrace{\lambda^-_z\lambda_{x}}_{\text{by \ref{propB}-2.}}\rho_{\lambda^-_y(x)}(y)}\rho_{\lambda^-_z(x)}(z)=\rho_{\lambda^-_{\lambda^-_z(x)}\lambda^-_z\left(x \, \triangleright_r \,  y\right)}\rho_{\lambda^-_z(x)}(z)\\
            &=\rho_{\lambda^-_{\lambda^-_z(x)}\underbrace{\left(\lambda^-_z(x) \, \triangleright_r \,  \lambda^-_z(y)\right)}_{\text{by \ref{prop-lambdahom}-2.}}}\rho_{\lambda^-_z(x)}(z)  =\rho_{\lambda^0_{\lambda^-_z(x)}\rho_{\lambda^-_{\lambda^-_z(y)}\lambda^-_z(x)}\lambda^-_z(y)}\rho_{\lambda^-_z(x)}(z)\\
            &=\rho_{\underbrace{\rho_{\lambda^-_{\lambda^-_z(y)}\lambda^-_z(x)}\lambda^0_{\lambda^0_z(x)}\lambda^-_z(y)}_{\text{by \ref{propAB}-6.}}}\rho_{\lambda^-_z(x)}(z)=\rho_{\rho_{\lambda^-_{\lambda^-_z(y)}\lambda^-_z(x)}\lambda^-_z(y)}\underbrace{\lambda^0_{\lambda^0_z(x)}\rho_{\lambda^-_z(x)}(z)}_{\text{by \ref{lem-lambda0-2}-1.}}\\
            &=\rho_{\rho_{\lambda^-_{\lambda^-_z(y)}\lambda^-_z(x)}\lambda^-_z(y)}\rho_{\lambda^-_z(x)}(z).
        \end{align*}
        On the other hand, 
        \begin{align*}
            &\rho_{\lambda^-_{\rho_{\lambda^-_z(y)}(z)}\lambda^-_y(x)}\rho_{\lambda^-_z(y)}(z)\underset{\eqref{third}}{=}\rho_{\rho_{\lambda^-_{\rho_{\lambda^-_z(y)}(z)}\lambda^-_y(x)}\lambda^-_z(y)}\rho_{\lambda_{\lambda^-_z(y)}\lambda^-_{\rho_{\lambda^-_z(y)}(z)}\lambda^-_y(x)}   (z)      \\
            &\underset{\ref{lem-lambda0-2}-3.}{=}\rho_{\rho_{\lambda^-_{\lambda^-_z(y)}\lambda^-_z(x)}\lambda^-_z(y)}\rho_{ \lambda^0_{\lambda^-_z(y)}\lambda^-_z(x)}(z) 
        \ \underset{\ref{propAB}-6.}{=}\rho_{\rho_{\lambda^-_{\lambda^-_z(y)}\lambda^-_z(x)}\lambda^-_z(y)}\rho_{\lambda^0_{\lambda^0_z(y)}\lambda^-_z(x)}(z) \\
            &\underset{\ref{propAB}-1.}{=}\rho_{\rho_{\lambda^-_{\lambda^-_z(y)}\lambda^-_z(x)}\lambda^-_z(y)}\rho_{\lambda^0_y\lambda^-_z(x)}(z)
            \underset{\ref{lem-lambda0-2}-1.}{=}
            \lambda^0_y\rho_{\rho_{\lambda^-_{\lambda^-_z(y)}\lambda^-_z(x)}\lambda^-_z(y)}\rho_{\lambda^-_z(x)}(z).
        \end{align*}
        Hence,
\begin{align*}
    x \triangleright_r (y \triangleright_r z)
            &=\lambda_x
            \lambda_{\rho_{\lambda^-_y(x)}(y)}\lambda^0_y
            \rho_{\lambda^-_{\rho^{}_{\lambda^-_z(x)}(z)}\rho_{\lambda^-_y(x)}(y)}
            \rho^{}_{\lambda^-_z(x)}(z)\\
&\underset{\ref{lem-lambda0-2}-2.}{=}
\lambda_x\lambda_{\rho_{\lambda^-_y(x)}(y)}
            \rho_{\lambda^-_{\rho^{}_{\lambda^-_z(x)}(z)}\rho_{\lambda^-_y(x)}(y)}
            \rho^{}_{\lambda^-_z(x)}(z)
            = (x\triangleright_r y)\triangleright_r(x\triangleright_r z).
\end{align*} 
Therefore the claim follows.

\end{proof}\end{theor}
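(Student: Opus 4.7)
The plan is to prove the self-distributivity identity $x \triangleright_r (y \triangleright_r z) = (x \triangleright_r y) \triangleright_r (x \triangleright_r z)$ by expanding both sides via the definition and then steadily reducing them using the long list of identities proved in Propositions \ref{propA}--\ref{lem-lambda0-2} and Lemma \ref{prop-lambdahom}. Since the definition nests two factors of $\lambda$ and $\rho$, each side unfolds into an expression containing six ``layers''; the whole task is to collapse these layers until both sides read identically.

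First, I would unfold the right-hand side $(x\triangleright_r y)\triangleright_r(x\triangleright_r z)$ directly from the definition, producing an outer $\lambda_{\lambda_x\rho_{\lambda^-_y(x)}(y)}$ and a nested $\rho_{\lambda^-_{\lambda_x\rho_{\lambda^-_z(x)}(z)}\lambda_x\rho_{\lambda^-_y(x)}(y)}$. The plan is then to apply Proposition \ref{propAB}-3 to rewrite $\lambda^-_{\lambda_x(\cdot)}\lambda_x$ as $\lambda_{\rho_{\cdot}(x)}\lambda_{\cdot}^-$ inside the $\rho$-index; this brings the index into the form required to apply \eqref{second}, after which Proposition \ref{propAB}-5 removes a stray idempotent $\lambda^0$ from the subscript, and finally \eqref{first} lets me factor out $\lambda_x$ in front. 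The outcome is $\lambda_x\lambda_{\rho_{\lambda^-_y(x)}(y)}\rho_{\lambda^-_{\rho^{}_{\lambda^-_z(x)}(z)}\rho_{\lambda^-_y(x)}(y)}\rho^{}_{\lambda^-_z(x)}(z)$.

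Similarly, for the left-hand side $x\triangleright_r(y\triangleright_r z)$ I would expand and use Proposition \ref{propA} to turn $\rho_{\lambda^-_{\lambda_y(\cdot)}(x)}$ into a composition of $\lambda_{\rho_{(\cdot)}(y)}$ and $\rho_{\lambda^-_{(\cdot)}\lambda^-_y(x)}$, then use \eqref{second} to commute $\rho$ past $\lambda$ at the outer level, and Proposition \ref{propAB}-5 again to delete the resulting idempotent from the subscript. This produces $\lambda_x\lambda_{\rho_{\lambda^-_y(x)}(y)}\rho_{\lambda^-_{\rho_{\lambda^-_z(y)}(z)}\lambda^-_y(x)}\rho_{\lambda^-_z(y)}(z)$.

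Comparing the two expressions, both begin with $\lambda_x\lambda_{\rho_{\lambda^-_y(x)}(y)}$, so it remains to identify the two $\rho\rho$-tails. The crux of the argument — and the main obstacle — is to show these tails agree, and this is where condition \eqref{C} enters in an essential way. For the right-hand tail I would use Proposition \ref{propAB}-2 and Proposition \ref{propB}-2 to rewrite the inner subscript $\lambda^-_{\rho_{\lambda^-_z(x)}(z)}\rho_{\lambda^-_y(x)}(y)$ as $\lambda^-_{\lambda^-_z(x)}\lambda^-_z(x\triangleright_r y)$; then, because $\lambda^-_z$ is a $\triangleright_r$-homomorphism by Lemma \ref{prop-lambdahom}-2, this rewrites in turn as $\lambda^-_z(x)\triangleright_r\lambda^-_z(y)$, which after one more unfolding and an application of Proposition \ref{propAB}-6 and Proposition \ref{lem-lambda0-2}-1 absorbs an intermediate $\lambda^0_{\lambda^0_z(x)}$. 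For the left-hand tail I would apply \eqref{third} to pull the outer $\rho$ inwards, use Proposition \ref{lem-lambda0-2}-3 to rewrite the resulting subscript $\lambda^-_{\rho_{\lambda^-_z(y)}(z)}\lambda^-_y(x)$ as $\lambda^-_{\lambda^-_z(y)}\lambda^-_z(x)$, then use Proposition \ref{propAB}-6, Proposition \ref{propAB}-1, and Proposition \ref{lem-lambda0-2}-1 to extract an extra $\lambda^0_y$ on the left; this extra factor is then absorbed by the preceding $\lambda_{\rho_{\lambda^-_y(x)}(y)}$ via Proposition \ref{lem-lambda0-2}-2. Once both tails are reduced to $\rho_{\rho_{\lambda^-_{\lambda^-_z(y)}\lambda^-_z(x)}\lambda^-_z(y)}\rho_{\lambda^-_z(x)}(z)$, the two sides coincide, completing the proof.
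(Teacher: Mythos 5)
Your proposal is correct and follows essentially the same route as the paper's proof: the same expansions of both sides, the same reductions via \cref{propAB}-3, \eqref{second}, \cref{propAB}-5 and \eqref{first}, and the same identification of the two $\rho\rho$-tails using \cref{propAB}-2, \cref{propB}-2, \cref{prop-lambdahom}-2, \cref{propAB}-6 and \cref{lem-lambda0-2}, with the residual $\lambda^0_y$ absorbed by \cref{lem-lambda0-2}-2. Nothing essential differs.
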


\smallskip

Note that \eqref{A}, \eqref{B}, and \eqref{C} are sufficient conditions for obtaining a shelf, but as the following example shows, they are not necessary. Namely, there exists a quasi left non-degenerate solution $(X, r)$ that does not satisfy \eqref{B}, but $\left(X, \triangleright_r\right)$ is still a shelf.

\begin{ex}
Let us consider the solution over the set $X = \{1, 2, 3\}$ as in \cref{ex:quasi-Lyubashenko}, with $f(1) = 1$, $f(2) = f(3) =2$ and $g(1) = g(2) = 2$, $g(3) = 3$. Then the condition \eqref{B} is not satisfied, but $(X, \triangleright_{r})$ such that $x \, \triangleright_r \, y = \lambda_x\rho_{\lambda^{-}_y(x)}(y) = fg(y)$ is a quasi rack.    
\end{ex}

\smallskip

As a direct application of the theorem, we obtain the following result.

\smallskip

\begin{cor}
Let $(X, r)$ be a quasi left non-degenerate solution of the form $r(x, y) = (\lambda(y), \rho_{y}(x))$ for certain maps $\lambda$ and $\rho_{y}$, satisfying the following conditions
\begin{align*}
    \lambda^{0} \rho_{x} = \rho_{x}\lambda^{0}
    \qquad\text{and}\qquad
    \rho_{x} = \lambda^{0}\rho_{\lambda^{0}(x)}
\end{align*}
for all $x\in X$.
Then the map $ s(x, y) = (\lambda^{0}(y), \lambda\rho_{\lambda^{-}(y)}(x))$ is a quasi left non-degenerate solution.
\end{cor}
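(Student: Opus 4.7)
The plan is to realize the statement as a special case of \cref{theo-shelf-special-lnd-sol}, and then verify the Yang--Baxter axioms for $s$ directly. With $\lambda_{x}=\lambda$ independent of $x$, condition \eqref{A} collapses to the tautology $\lambda^{0}=\lambda^{0}\lambda^{0}$, while \eqref{B} and \eqref{C} are precisely the two stated hypotheses. Hence \cref{theo-shelf-special-lnd-sol} applies and $(X,\triangleright_{r})$ with $x\triangleright_{r}y=\lambda\rho_{\lambda^{-}(x)}(y)$ is a shelf; observe that the second coordinate of $s(x,y)$ is exactly $y\triangleright_{r}x=\lambda\rho_{\lambda^{-}(y)}(x)$, so $s$ is shaped like a derived solution of this shelf, with first coordinate $\lambda^{0}(y)$ playing the role of $L_{x}^{0}(y)$.

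Two auxiliary identities would then drive the computation. First, \eqref{second} for $r$ with $\lambda$ constant immediately yields the intertwining relation $\lambda\rho_{y}=\rho_{\lambda(y)}\lambda$ for all $y$, and hence $\lambda\rho_{\lambda^{-}(a)}=\rho_{\lambda^{0}(a)}\lambda$. Second, combining the two hypotheses gives $\rho_{a}=\lambda^{0}\rho_{a}=\rho_{a}\lambda^{0}$ for every $a\in X$; in particular the image of every $\rho_{a}$ lies in $\mathrm{Im}(\lambda^{0})$, on which $\lambda^{0}$ acts as the identity, so $\rho_{a}(\lambda^{0}(w))=\rho_{a}(w)$.

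Next I would verify that $s(x,y)=(\bar{\lambda}_{x}(y),\bar{\rho}_{y}(x))$, with $\bar{\lambda}_{x}=\lambda^{0}$ and $\bar{\rho}_{y}=\lambda\rho_{\lambda^{-}(y)}$, satisfies \eqref{first}, \eqref{second}, \eqref{third}. Identity \eqref{first} is immediate since both sides equal $\lambda^{0}\lambda^{0}=\lambda^{0}$. Identity \eqref{second} reduces, using $\rho_{\lambda^{-}(z)}\lambda^{0}=\rho_{\lambda^{-}(z)}$ and $\lambda^{0}\lambda=\lambda$, to the tautology $\lambda\rho_{\lambda^{-}(z)}(y)=\lambda\rho_{\lambda^{-}(z)}(y)$. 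The substantive step is \eqref{third}: applying the intertwining relation to transport every $\lambda$ to the right of all $\rho$'s, the left-hand side becomes $\rho_{\lambda^{0}(z)}\rho_{\lambda(y)}\lambda^{2}(x)$, while the right-hand side becomes $\rho_{\lambda\rho_{\lambda^{-}(z)}(y)}\rho_{\lambda(z)}\lambda^{2}(x)=\rho_{\rho_{\lambda^{0}(z)}\lambda(y)}\rho_{\lambda(z)}\lambda^{2}(x)$, where one uses $\lambda\rho_{\lambda^{-}(z)}(y)=\rho_{\lambda^{0}(z)}\lambda(y)$. These two expressions coincide by a single invocation of \eqref{third} for $r$.

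Finally, quasi left non-degeneracy of $s$ is automatic: $\bar{\lambda}_{x}=\lambda^{0}$ is idempotent and independent of $x$, so $\lambda^{0}$ serves as its own relative inverse, giving $\bar{\lambda}_{x}^{0}=\lambda^{0}$, and the required commutation $\bar{\lambda}_{x}^{0}\bar{\lambda}_{y}=\bar{\lambda}_{y}\bar{\lambda}_{x}^{0}$ is trivial. The only delicate point in the argument is the bookkeeping for \eqref{third}, which is resolved cleanly by the intertwining relation $\lambda\rho_{a}=\rho_{\lambda(a)}\lambda$ coming from the hypothesis that $\lambda$ does not depend on its subscript.
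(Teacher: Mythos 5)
Your proof is correct, but the verification step follows a genuinely different route from the paper's. Both arguments begin by observing that with $\lambda_x=\lambda$ constant the hypotheses are exactly \eqref{B} and \eqref{C} while \eqref{A} degenerates to $\lambda^0=\lambda^0\lambda^0$, so \cref{theo-shelf-special-lnd-sol} yields the shelf $(X,\triangleright_r)$. From there the paper stays inside the shelf framework: it notes that for a map of the form $s(x,y)=(\lambda^0(y),\,y\triangleright_r x)$ the Yang--Baxter conditions collapse to the requirement that $\lambda^0$ be a homomorphism of $(X,\triangleright_r)$ (supplied by \cref{prop-lambdahom}) together with a twisted self-distributivity $x\triangleright_r(y\triangleright_r z)=(x\triangleright_r y)\triangleright_r(\lambda^0(x)\triangleright_r z)$, which reduces to ordinary self-distributivity via $\lambda^0(x)\triangleright_r z=x\triangleright_r z$. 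You instead verify \eqref{first}--\eqref{third} for $s$ by hand, driven by the intertwining relation $\lambda\rho_a=\rho_{\lambda(a)}\lambda$ (which is \eqref{second} for $r$ with constant $\lambda$) and the identity $\rho_a=\lambda^0\rho_a=\rho_a\lambda^0$, reducing \eqref{third} for $s$ to a single substitution into \eqref{third} for $r$; I checked the bookkeeping and it is sound. A notable consequence of your route is that the appeal to \cref{theo-shelf-special-lnd-sol} is actually dispensable: your computation never uses self-distributivity of $\triangleright_r$, so you get a self-contained, elementary proof of this special case, whereas the paper's version is shorter on computation and exhibits the corollary as an instance of the general derived-solution machinery.
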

\begin{proof} Let us denote $y\triangleright_r x = \lambda\rho_{\lambda^{-}(y)}(x)$, for all $x, y\in X$.  By Theorem~\ref{theo-shelf-special-lnd-sol}, $\left(X, \triangleright_r\right)$ is a shelf. Then the map $s(x, y) = (L^{0}(y), y\triangleright_r x)$ is a solution if and only if the following equations hold
\begin{align*}
    \label{morphism}&\lambda^{0}(z \triangleright_r y) = \lambda^{0}(z) \triangleright_r \lambda^{0}(y) \tag{Y2}\\
    \label{twisted-shelf}&x \triangleright_r (y \triangleright_r z) = (x \triangleright_r y) \triangleright_r (\lambda^{0}(x) \triangleright_r z) \tag{Y3}
\end{align*}
for all $x,y,z\in X$. The equality \ref{morphism} follows from \cref{prop-lambdahom}.
Moreover, if $x, y, z\in X$
$$
\lambda^{0}(x) \triangleright_r z = \lambda\rho_{\lambda^{-}\lambda^{0}(x)}(z) = \lambda\rho_{\lambda^{-}(x)}(z) = x\triangleright_r z,
$$ 
so condition \ref{twisted-shelf} follows from Theorem~\ref{theo-shelf-special-lnd-sol}. 
\end{proof}

\smallskip

\begin{ex}\label{prop_L_dual}
    Let $(S, r_S)$ be the solution associated with a dual weak brace $\left(S, +, \circ\right)$. Then conditions \eqref{A}, \eqref{B}, and \eqref{C} are satisfied. 
    Thus $\left(S, \triangleright_r\right)$ is a quasi rack that coincides with the conjugation quasi quandle on $S$. In fact, if $x,y\in S$, then
\begin{align*}
    x\triangleright_r y &= \lambda_x\rho_{\lambda_{y^-}(x)}(y)
    = -x + x\circ\rho_{\lambda_{y^-}(x)}(y)
    = -x + x\circ\left(y^- + \lambda_{y^-}\left(x\right)\right)^-\circ \lambda_{y^-}\left(x\right)\\
    &= -x + x\circ\left(y^-\circ x\right)^-\circ \lambda_{y^-}\left(x\right)
    = -x + x^0 + y\circ y^-\circ\left(y+x\right)
    = -x + y^0 + y+ x\\
    &= -x + y + x,
\end{align*}  
that is our claim.

\end{ex}

\smallskip

\cref{bijective=rack} states that a left non-degenerate solution $(X, r)$ is bijective if and only if the associated shelf $\left(X, \triangleright_r\right)$ is a rack. Moreover, the above example suggests that the following natural generalization of this fact holds.
\begin{que}
Is a shelf $(X, \triangleright_r)$ associated to any left quasi non-degenerate and quasi bijective solution satisfying \eqref{A}, \eqref{B} and \eqref{C} a quasi rack?
\end{que}

\medskip

\section{A description of quasi left non-degenerate solutions}\label{sec:twists}
In this section, we introduce the notion of \emph{g-twist} that generalizes the known one of \emph{Drinfel'd twist} in the context of non-degenerate bijective solutions, see \cite{Do21, Dr85}. Consistently with \cite{DoRySt24}, quasi left non-degenerate solutions as in \cref{theo-shelf-special-lnd-sol} can be described in terms of g-twists.

\medskip

\begin{lemma}\label{lemma_hom_varphi}
    Let $(X, \triangleright)$ be a shelf and consider a map $\varphi: X \to \Hom(X, \triangleright), \, a \mapsto \varphi_a$ such that each $\varphi_a$ admits a {\rm(}unique{\rm)} relative inverse and $\varphi_a^0L_b=L_b\varphi_a^0$, for all $a, b \in X$. Then $\varphi_a^0$ and $\varphi_a^-$ are shelf homomorphisms, for all $a \in X$.
    \begin{proof}
        First, if $a, b \in X$,  then 
       \begin{align*}
          L_{\varphi^-_a(b)}\varphi^-_a&=\varphi_a^0L_{\varphi^-_a(b)}\varphi^-_a\\
            &=\varphi_a^-L_{\varphi^0_a(b)}\varphi^0_a &\mbox{since $\varphi_a \in \Hom(X, \triangleright)$}\\
            &=\varphi_a^-L_{\varphi^0_a(b)}. 
        \end{align*}
        Hence, it follows
        \begin{align*}
           L_{ \varphi_a^0(b)}\varphi_a^0=L_{\varphi_a^-\varphi_a(b)}\varphi_a^-\varphi_a=\varphi_a^-L_{\varphi^0_a\varphi_a(b)}\varphi_a=\varphi_a^-L_{\varphi_a(b)}\varphi_a{=}\varphi^0_aL_b,
        \end{align*}
where in the last equality again we use $\varphi_a \in \Hom(X, \triangleright)$.
    Proceeding like in the last part of the proof of \cref{prop-lambdahom}, one can check that $\varphi_a^-$ also is a shelf homomorphism.
    \end{proof}
\end{lemma}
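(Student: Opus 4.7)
The plan is to exploit two structural facts: first, that $\varphi_{a}$ being a shelf homomorphism translates to the operator identity $L_{\varphi_{a}(b)}\varphi_{a} = \varphi_{a}L_{b}$ for every $b\in X$; second, that $\varphi_{a}^{0} = \varphi_{a}\varphi_{a}^{-} = \varphi_{a}^{-}\varphi_{a}$ and this idempotent commutes with every $L_{b}$. The target statement splits in two: one must produce the homomorphism identity $L_{\varphi_{a}^{0}(b)}\varphi_{a}^{0} = \varphi_{a}^{0}L_{b}$ and then the analogous identity for $\varphi_{a}^{-}$.

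For the first part, my approach is to reduce $\varphi_{a}^{0}$ to $\varphi_{a}^{-}\varphi_{a}$ and then push $\varphi_{a}$ across $L_{b}$ by the homomorphism property of $\varphi_{a}$. Concretely, one first establishes the auxiliary identity $L_{\varphi_{a}^{-}(b)}\varphi_{a}^{-} = \varphi_{a}^{-}L_{\varphi_{a}^{0}(b)}$ by sandwiching with the central idempotent $\varphi_{a}^{0}$ and using that $\varphi_{a}\in\Hom(X,\triangleright)$. Composing this with $\varphi_{a}$ on the right and simplifying via the hypothesis $\varphi_{a}^{0}L_{c} = L_{c}\varphi_{a}^{0}$ then yields $L_{\varphi_{a}^{0}(b)}\varphi_{a}^{0} = \varphi_{a}^{0}L_{b}$. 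This is exactly the computation already sketched in the statement's accompanying proof fragment.

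For the second part, the strategy mirrors the final step in the proof of \cref{prop-lambdahom}: express $\varphi_{a}^{-}(x) \triangleright \varphi_{a}^{-}(y) = \varphi_{a}^{0}\varphi_{a}^{-}(x) \triangleright \varphi_{a}^{0}\varphi_{a}^{-}(y)$, then use the already-established homomorphism property of $\varphi_{a}^{0}$ to pull it outside, and finally use $\varphi_{a}^{0}\varphi_{a}^{-} = \varphi_{a}^{-}$ and $\varphi_{a}\varphi_{a}^{-} = \varphi_{a}^{0}$ to collapse the expression back to $\varphi_{a}^{-}(x\triangleright y)$. Equivalently in operator form, one checks $L_{\varphi_{a}^{-}(b)}\varphi_{a}^{-} = \varphi_{a}^{-}L_{b}$, which follows from the auxiliary identity above by applying $\varphi_{a}^{0}$ on the right and using centrality of $\varphi_{a}^{0}$.

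The main obstacle, though mild, is bookkeeping: one has to make sure at each step that the subscript of the left translation is transformed correctly (in particular, replacing $b$ by $\varphi_{a}^{0}(b)$ costs nothing once $L$ is composed with $\varphi_{a}^{0}$, because $L_{\varphi_{a}^{0}(b)}\varphi_{a}^{0} = \varphi_{a}^{0}L_{b}$), and that one never needs $\varphi_{a}^{-}$ to be a homomorphism before it has been proved. Structuring the argument as (i) auxiliary operator identity, (ii) $\varphi_{a}^{0}$ is a homomorphism, (iii) $\varphi_{a}^{-}$ is a homomorphism, keeps these dependencies linear and avoids circularity.
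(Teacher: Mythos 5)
Your proposal is correct and follows essentially the same route as the paper: the auxiliary identity $L_{\varphi_a^-(b)}\varphi_a^- = \varphi_a^- L_{\varphi_a^0(b)}$ obtained by sandwiching with the central idempotent, then the homomorphism property of $\varphi_a^0$ (noting, as you implicitly do, that one first evaluates the auxiliary identity at $\varphi_a(b)$ before composing with $\varphi_a$ on the right), and finally the same sandwich trick as in \cref{prop-lambdahom} for $\varphi_a^-$. Your operator-form shortcut for the last step (right-composing the auxiliary identity with $\varphi_a^0$ and invoking the already-proved homomorphism property of $\varphi_a^0$) is a valid, slightly streamlined variant of the paper's element-wise computation.
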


\smallskip

\begin{defin}\label{def:generalized_twist}
    Let $(X, \triangleright)$ be a shelf. A map $\varphi: X \to \Hom(X, \triangleright), \, a \mapsto \varphi_a$ such that each $\varphi_a$ admits a (unique) relative commuting inverse and
 \begin{align}
\varphi_a^0L_b=L_b\varphi_a^0, \ \quad\text{and}\quad   \varphi^0_{\varphi_a(b)}=\varphi^0_a\varphi^0_b, \quad\text{and}\quad \varphi^0_{\varphi^0_a(b)}L_{\varphi^0_a(b)}(a)= L_{b}(a) \label{L0-com}
 \end{align}   
 hold, for all $a, b \in X$, is said to be a \emph{g-twist} if the following hold
\begin{align}\label{eq:varphi}
       &\varphi_a\varphi_b = \varphi_{\varphi_a(b)}\varphi_{\varphi^{-}_{\varphi_a(b)}L_{\varphi_a(b)}(a)} 
    \end{align}
for all $a, b \in X$.
\end{defin}



\smallskip

The computations appearing in the proof of the following result are analogous to those of \cite[Theorem 2.15]{DoRySt24}. Nevertheless, for completeness, we include them in our framework.
\begin{theor}\label{le:lndsol}
    Let $\left(X,\,\triangleright\right)$ be a left shelf and $\varphi:X
\to X^X,$ $a\mapsto \varphi_a$ a map such that each $\varphi_a$ admits a {\rm(}unique{\rm)} relative commuting inverse and conditions \eqref{L0-com} are satisfied. Then, the map 
$r_\varphi:X\times X\to X\times X$ defined by
    \begin{equation}\label{prop:solform}
        r_\varphi\left(a, b\right)  
        = \left(\varphi_a\left(b\right),\, \varphi^{-}_{\varphi_a\left(b\right)}\left(\varphi_a\left(b\right)\triangleright a\right)\right),  
    \end{equation}
    for all $a,b\in X$ is a quasi left non-degenerate solution as in \cref{theo-shelf-special-lnd-sol} if and only if $\varphi$ is a g-twist. Moreover, 
    any quasi left non-degenerate solution as in \cref{theo-shelf-special-lnd-sol} can be obtained that way.
\end{theor}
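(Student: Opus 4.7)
The plan is to adapt the proof of \cite[Theorem 2.15]{DoRySt24} to the quasi setting. Throughout, write $\lambda_a := \varphi_a$ and $\rho_b(a) := \varphi^-_{\varphi_a(b)} L_{\varphi_a(b)}(a)$, so that \eqref{prop:solform} takes the form $r_\varphi(a,b) = (\lambda_a(b), \rho_b(a))$. The goal reduces to showing that $r_\varphi$ satisfies the quasi left non-degeneracy axioms together with \eqref{A}, \eqref{B}, \eqref{C} and the identities \eqref{first}--\eqref{third} if and only if relation \eqref{eq:varphi} holds and each $\varphi_a$ is a shelf endomorphism. Quasi left non-degeneracy of $r_\varphi$ is immediate from the assumption that $\varphi_a$ has a commuting relative inverse, while the commutation $\lambda_a^0 \lambda_b = \lambda_b \lambda_a^0$ will be extracted from the first two conditions in \eqref{L0-com} and the identity $\varphi^0_{\varphi_a(b)} = \varphi^0_a \varphi^0_b$.

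For the implication g-twist $\Rightarrow$ quasi LND solution satisfying \eqref{A}, \eqref{B}, \eqref{C}, note that \eqref{A} is exactly the second part of \eqref{L0-com}. Condition \eqref{B} is obtained by rewriting $L_{\varphi_a(b)}(a)$ with the help of the third part of \eqref{L0-com} together with the idempotency of the maps $\varphi^0$, and \eqref{C} follows by an analogous rearrangement using the first part of \eqref{L0-com}. The axiom \eqref{first} is \eqref{eq:varphi} once one recognizes that $\rho_b(a) = \varphi^-_{\varphi_a(b)} L_{\varphi_a(b)}(a)$; from \eqref{first} and \eqref{A} already verified, \eqref{third} is deduced by the idempotent trick used in the proof of \cref{teo-request0}. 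The intricate step is \eqref{second}: both sides are expanded in terms of $\varphi$, $\varphi^-$ and $L$, and \eqref{eq:varphi}, the shelf distributivity of $L$, the homomorphism property of $\varphi_a$, and all three parts of \eqref{L0-com} are applied iteratively to match them, mirroring the corresponding computation in the bijective case.

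For the converse, if $r_\varphi$ is a quasi LND solution of the form in \cref{theo-shelf-special-lnd-sol}, then \eqref{first} rewritten in $\varphi$-form is precisely \eqref{eq:varphi}. To see that each $\varphi_a$ preserves $\triangleright$, one observes that $\lambda_a \rho_{\lambda^-_b(a)}(b) = L_a(b)$ by the very definition of $\rho$, so the derived shelf $\triangleright_{r_\varphi}$ coincides with $\triangleright$, and \cref{prop-lambdahom} then yields $\varphi_a = \lambda_a \in \Hom(X, \triangleright)$. For the \emph{Moreover} clause, given any quasi LND solution $(X, r)$ satisfying \eqref{A}, \eqref{B}, \eqref{C}, set $\varphi_a := \lambda_a$ and take the shelf $\triangleright_r$ provided by \cref{theo-shelf-special-lnd-sol}. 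By \cref{prop-lambdahom} each $\lambda_a$ is a shelf homomorphism, and by \cref{lemma_hom_varphi} so are $\lambda^-_a$ and $\lambda^0_a$; the conditions \eqref{L0-com} follow from \eqref{A}, \eqref{C}, and \cref{propAB}. Finally, \cref{prop-lambdahom}-1 gives $\rho_b(a) = \lambda^-_{\lambda_a(b)}(\lambda_a(b) \triangleright_r a) = \varphi^-_{\varphi_a(b)} L_{\varphi_a(b)}(a)$, so $r_\varphi = r$. The hardest part will be verifying \eqref{second} in the forward direction: it requires chaining \eqref{eq:varphi} with all three parts of \eqref{L0-com} and the homomorphism property in precisely the right order, effectively reproducing the most technical computation appearing in the proof of \cref{theo-shelf-special-lnd-sol}.
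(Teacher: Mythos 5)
Your overall architecture (set $\rho_b(a):=\varphi^-_{\varphi_a(b)}L_{\varphi_a(b)}(a)$, identify \eqref{first} with \eqref{eq:varphi}, match \eqref{A}, \eqref{B}, \eqref{C} with \eqref{L0-com}, and handle the converse and the \emph{Moreover} clause via \cref{prop-lambdahom}) agrees with the paper's, but there is a genuine gap in the forward direction. You claim that \eqref{third} ``is deduced by the idempotent trick used in the proof of \cref{teo-request0}.'' That trick derives \eqref{first} \emph{from} \eqref{third} (not the other way around), and it works there only because in that setting $\lambda_x=L_x^0=\rho_x^0$, so $\lambda$ is literally the idempotent part of $\rho$. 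Here $\lambda_a=\varphi_a$ bears no such relation to $\rho_b(a)=\varphi^-_{\varphi_a(b)}L_{\varphi_a(b)}(a)$, so the argument does not transfer. In fact, in the paper's proof \eqref{third} is the single hardest verification: it is a chain of roughly ten equalities that requires first establishing the auxiliary identities $\varphi^-_{\varphi_a(b)}\varphi_a=\varphi_{\rho_b(a)}\varphi^-_b$ and $\varphi_{\varphi_a\varphi_b(c)}\varphi_{\rho_{\varphi_{\rho_b(a)}(c)}\varphi_a(b)}=\varphi_{\varphi_a(b)}\varphi_{\varphi_{\rho_b(a)}(c)}$, and crucially uses \cref{lemma_hom_varphi} (that $\varphi^0_a$ and $\varphi^-_a$ are shelf homomorphisms), which you never invoke in this direction. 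You have also misjudged where the difficulty lies: \eqref{second} is a short five-line computation, whereas \eqref{third} is the one you cannot wave away.

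A secondary, lesser issue: in the converse you assert that $\lambda_a\rho_{\lambda^-_b(a)}(b)=L_a(b)$ holds ``by the very definition of $\rho$.'' It does not; unwinding the definition gives $\varphi_a\varphi^-_{\varphi^0_b(a)}L_{\varphi^0_b(a)}(b)$, and turning this into $L_a(b)$ requires the second and third identities of \eqref{L0-com} together with consequences of \eqref{first} (namely \cref{propAB}-1, available via \cref{conseq.Y1}). The step is provable and your route through the coincidence of $\triangleright_{r_\varphi}$ with $\triangleright$ plus \cref{prop-lambdahom} is a legitimate alternative to the paper's direct computation of $\varphi_aL_y(x)=L_{\varphi_a(y)}\varphi_a(x)$, but as written the justification is missing. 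The \emph{Moreover} clause is handled correctly and matches the paper.
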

\begin{proof}
    First, let us assume that $\varphi$ is a g-twist and set $\rho_{b}\left(a\right):= \varphi^{-}_{\varphi_a\left(b\right)}L_{\varphi_a\left(b\right)}\left(a\right)$, for all $a,b\in X$. Then \eqref{first} follows by \eqref{eq:varphi}. Now, by \cref{conseq.Y1}, proceeding as in the proof of \cref{propAB}-3., since the last two equalities in \eqref{L0-com} correspond to \eqref{A} and \eqref{B}, we have
    \begin{align}
\label{eq_y1} \forall\, a, b \in X \quad \varphi^-_{\varphi_a(b)}\varphi_a=\varphi_{\rho_b(a)}\varphi_b^-.
    \end{align}
    Thus, if $a,b,c\in X$, we get 
    \begin{align*}
     \rho_{\varphi_{\rho_b\left(a\right)}\left(c\right)}\varphi_a\left(b\right) 
    &= \varphi^{-}_{\varphi_{\varphi_a\left(b\right)}\varphi_{\rho_b\left(a\right)}\left(c\right)}
    L_{\varphi_{\varphi_a\left(b\right)}\varphi_{\rho_b\left(a\right)}\left(c\right)}\varphi_a\left(b\right)\\
    &=\varphi^{-}_{\varphi_a\varphi_b\left(c\right)}L_{\varphi_a\varphi_b\left(c\right)}{\varphi_a\left(b\right)}&\mbox{by \eqref{first}}\\
    &= \varphi^{-}_{\varphi_a\varphi_b\left(c\right)}\varphi_aL_{\varphi_b\left(c\right)}\left(b\right)
    &\mbox{$\varphi_a\in \Hom\left(X,\,\triangleright\right)$}\\
    &=\varphi_{\rho_{\varphi_b\left(c\right)}\left(a\right)}\varphi^{-}_{\varphi_b\left(c\right)}L_{\varphi_b\left(c\right)
    }\left(b\right)&\mbox{by \eqref{eq_y1}}\\
    &= \varphi_{\rho_{\varphi_b\left(c\right)}\left(a\right)}\rho_c\left(b\right),
    \end{align*}
hence \eqref{second} holds.
In particular, from the previous equalities,  we obtain that
\begin{equation}\label{eq:2}
    \varphi_{\rho_{\varphi_b\left(c\right)}\left(a\right)}\rho_c\left(b\right) =
 \varphi^{-}_{\varphi_a\varphi_b\left(c\right)}\varphi_aL_{\varphi_b\left(c\right)}\left(b\right).
\end{equation}
Moreover, by applying twice \eqref{first}, 
\begin{equation}\label{eq:2.2}
    \varphi_{\varphi_a\varphi_b\left(c\right)}\varphi_{\rho_{\varphi_{\rho_b\left(a\right)}\left(c\right)}\varphi_a\left(b\right)}
    = \varphi_{\varphi_{\varphi_a\left(b\right)}\varphi_{\rho_{b}\left(a\right)}\left(c\right)}
\varphi_{\rho_{\varphi_{\rho_b\left(a\right)}\left(c\right)}\varphi_a\left(b\right)}
    = \varphi_{\varphi_a\left(b\right)}\varphi_{\varphi_{\rho_b\left(a\right)}\left(c\right)}.
\end{equation}
Noting that, by \cref{lemma_hom_varphi}, $\varphi^0_a, \varphi^{-}_a\in \Hom(X,\triangleright)$, for all $a\in X$, we obtain that
$$
\begin{aligned}
&\rho_{\rho_c\left(b\right)}\rho_{\varphi_b\left(c\right)}\left(a\right)=
 \varphi^{-}_{\varphi_{\rho_{\varphi_b\left(c\right)}\left(a\right)}\rho_c\left(b\right)}
L_{\varphi_{\rho_{\varphi_b\left(c\right)}\left(a\right)}\rho_c\left(b\right)}\rho_{\varphi_b\left(c\right)}\left(a\right)
\\
&= \varphi^{-}_{\rho_{\varphi_{\rho_b\left(a\right)}\left(c\right)}\varphi_a\left(b\right)}
L_{\varphi^{-}_{\varphi_a\varphi_b\left(c\right)}\varphi_aL_{\varphi_b\left(c\right)}\left(b\right)}
\varphi^{-}_{\varphi_a\varphi_b\left(c\right)}
L_{\varphi_a\varphi_b\left(c\right)}\left(a\right)
&\mbox{by \eqref{second} and \eqref{eq:2}}\\
&= \varphi^{-}_{\rho_{\varphi_{\rho_b\left(a\right)}\left(c\right)}\varphi_a\left(b\right)}\varphi^{-}_{\varphi_a\varphi_b\left(c\right)}
L_{L_{\varphi_a\varphi_b\left(c\right)}\varphi_a\left(b\right)}L_{\varphi_a\varphi_b\left(c\right)}\left(a\right)
&\mbox{$\varphi_a, \varphi^{-}_{\varphi_a\varphi_b\left(c\right)}\in \Hom\left(X,\,\triangleright\right)$}\\
&= \varphi^{-}_{\varphi_{\rho_b\left(a\right)}\left(c\right)}
\varphi^{-}_{\varphi_a\left(b\right)}
L_{\varphi_a\varphi_b\left(c\right)}L_{\varphi_a\left(b\right)}\left(a\right)
&\mbox{by \eqref{self_distri}\ \text{and} \ \eqref{eq:2.2}}\\
&= \varphi^{-}_{\varphi_{\rho_b\left(a\right)}\left(c\right)}
    L_{\varphi^{-}_{\varphi_a\left(b\right)}\varphi_a\varphi_b\left(c\right)}
    \varphi^{-}_{\varphi_a\left(b\right)}L_{\varphi_a\left(b\right)}\left(a\right)
    &\mbox{$\varphi^{-}_{\varphi_a(b)}\in \Hom\left(X,\,\triangleright\right)$}\\
&= \varphi^{-1}_{\varphi_{\rho_b\left(a\right)}\left(c\right)}
    L_{\varphi^0_{\varphi_a(b)}\varphi_{\rho_b(a)}(c)}
\varphi^0_{\varphi_a(b)}\varphi^{-}_{\varphi_a\left(b\right)}L_{\varphi_a\left(b\right)}\left(a\right) &\mbox{by \eqref{first}}\\
&=\varphi^{-}_{\varphi_{\rho_b\left(a\right)}\left(c\right)}
    \varphi^0_{\varphi_a(b)}L_{\varphi_{\rho_b(a)}(c)}\varphi^{-}_{\varphi_a\left(b\right)}L_{\varphi_a\left(b\right)}\left(a\right)&\mbox{$\varphi^0_{\varphi_a(b)} \in \Hom(X, \triangleright)$}\\
&=\varphi^{-}_{\varphi_{\rho_b\left(a\right)}\left(c\right)}L_{\varphi_{\rho_b(a)}(c)}
\varphi^{-}_{\varphi_a\left(b\right)}L_{\varphi_a\left(b\right)}\left(a\right) \\
&= \varphi^{-}_{\varphi_{\rho_b\left(a\right)}\left(c\right)}
    L_{\varphi_{\rho_b\left(a\right)}\left(c\right)}\rho_b\left(a\right)
    \\ 
    &=\rho_{c}\rho_{b}\left(a\right),
    \end{aligned}
$$
i.e., \eqref{third} is satisfied. Hence, $r_\varphi$ is a quasi left non-degenerate solution satisfying \eqref{A} and \eqref{B}.  Moreover, by \cref{conseq.Y1}, using \cref{propAB}-1., since
\begin{align*}
 \varphi^0_c\rho_b(a)&= \varphi^0_c \varphi^-_{\varphi_a(b)}L_{\varphi_a(b)}(a)=\varphi^0_c \varphi^-_{\varphi_a(b)}\varphi^0_cL_{\varphi_a(b)}(a)=\varphi^-_{\varphi^0_c\varphi_a(b)}L_{\varphi^0_c\varphi_a(b)}\varphi^0_c(a)\\
  &=\varphi^-_{\varphi_{\varphi^0_c(a)}(b)}L_{\varphi^0_c\varphi_a(b)}\varphi^0_c(a)=\rho_b\varphi^0_c(a),
\end{align*}
it follows that also \eqref{C} is satisfied. In addition, since, for all $a, b \in X$, $\varphi^-_{\varphi^0_b(a)}=\varphi_a^0\varphi_b$ and $\varphi^0_{\varphi^0_a(b)}=\varphi_a^0\varphi_b^0$ (as in \cref{propAB}-1.),
\begin{align*}
    a \triangleright_r b=\varphi_a\varphi^-_{\varphi^0_b(a)}L_{\varphi^0_b(a)}(b)=\varphi_a^0\varphi_b^0L_{\varphi^0_b(a)}(b)=\varphi^0_{\varphi^0_a(b)}L_{\varphi^0_b(a)}(b)= L_{a}(b).
\end{align*}
Conversely, assume that the map $r_\varphi$ is a solution on the set $X$ as in \cref{theo-shelf-special-lnd-sol}. Then, \eqref{first} 
coincides with the identity \eqref{eq:varphi}. 
Furthermore, by applying \eqref{first} to the identity  \eqref{second} and looking at the previous computations, we get
\begin{align}\label{eq:Y1+Y2}
\varphi^{-}_{\varphi_a\varphi_b\left(c\right)}\varphi_a  L_{\varphi_b\left(c\right)}\left(b\right) = 
\varphi^{-}_{\varphi_a\varphi_b\left(c\right)}L_{\varphi_a\varphi_b\left(c\right)}\varphi_a\left(b\right), 
\end{align}
for all $a,b,c\in X$. 
Moreover, 
if $x, y \in X$, we have
\begin{align*}
   \varphi_a L_{y}(x)&= \varphi_a \varphi^0_{\varphi^0_x(y)}L_{\varphi^0_x(y)}(x) \underset{\ref{propAB}-1.}{=} \varphi_a^0 \varphi_x^0 \varphi_y^0\varphi^{}_aL_{\varphi^0_x(y)}(x)=\varphi^0_{\varphi_a \varphi^0_x(y)}\varphi_a
    L_{ \varphi^{0}_x(y)}(x)\\
    &= \varphi^{}_{\varphi_a \varphi^{0}_x(y)}
    \varphi^-_{\varphi_a \varphi^{0}_x(y)}
    \varphi^{}_a L_{\varphi^0_x(y)}(x)\underset{\eqref{eq:Y1+Y2}}{=}
    \varphi^{}_{\varphi_a \varphi^{0}_x(y)}
    \varphi^-_{\varphi_a \varphi^{0}_x(y)}
     L_{\varphi^{}_a\varphi^0_x(y)}\varphi^{}_a(x)\\
     &\underset{\ref{propAB}-1.}{=}\varphi^0_{\varphi^0_{\varphi_a(x)}\varphi_a(y)}
     L_{\varphi^0_{\varphi_a(x)}\varphi_a(y)}\varphi_a(x)
    = L_{\varphi_a(y)}\varphi_a(x)
\end{align*}
%
i.e., $\varphi_{a}\in \Hom\left(X,\, \triangleright\right)$, for every $a\in X$.
Therefore, $\varphi$ is a g-twist.

To observe that any quasi left non-degenerate solution $r(a,b)= \left(\lambda_a\left(b\right), \rho_b\left(a\right)\right)$ as in \cref{theo-shelf-special-lnd-sol}
can be obtained that way, it suffices to show that $\lambda$ is a g-twist of the shelf $(X,\triangleright_r),$ where  $a\triangleright_r b= \lambda_a\rho_{\lambda^{-}_b\left(a\right)}\left(b\right).$In fact, by \cref{prop-lambdahom}, $\lambda_a$ is a shelf homomorphism.
Clearly, $\rho_b\left(a\right) = \lambda^{-}_{\lambda_a\left(b\right)}\left(\lambda_a\left(b\right)\triangleright_r a\right)$, and the map $r$ can be written 
as in \eqref{prop:solform}. One can easily show that the identities \eqref{L0-com} and \eqref{eq:varphi} are satisfied. Therefore the statement is proven.
\end{proof}

\smallskip

\begin{rem}
 If $(X, \triangleright)$ is a quasi rack that satisfies  conditions \eqref{request0} and \eqref{request1} then, for all $x \in X$, the map $\varphi_x=L_x$ determines a  g-twist. Indeed, by the definition $L^0_xL_y=L_yL^0_x$ and $L_x \in \Hom(X, \triangleright)$. In addition, by \eqref{request0} $L^0_{L_x(y)}=L^0_xL^0_y$ and by \eqref{request1} $L^0_{L^0_x(y)}L_{L^0_x(y)}(x)= L_{y}(x)$, for all $x, y \in X$.
 Moreover,
\begin{align*}
\varphi_{\varphi_x\left(y\right)}\varphi_{\varphi^{-1}_{\varphi_x\left(y\right)}L_{\varphi_x\left(y\right)}\left(x\right)}
&= L_{L_x\left(y\right)}L_{L^{0}_{L_x\left(y\right)}\left(x\right)} = L_{L^{}_{L_x\left(y\right)}L^0_{L_x\left(y\right)}(x)}L_{L_x\left(y\right)}\\
&= L_{L_{L_x\left(y\right)}(x)}L_{L_x\left(y\right)}
= L_{L_x(y)}L_x = L_xL_y
= \varphi_x\varphi_y,
\end{align*}
for all $x,y\in X$. 
In particular, we obtain \begin{align*}
    r_\varphi\left(x, y\right)  
        = \big(\varphi_x\left(y\right),\, \varphi^{-1}_{\varphi_x\left(y\right)}\left(\varphi_x\left(y\right)\triangleright x\right)\big)
        = \big(L_x\left(y\right),\, L^{0}_{L_x\left(y\right)}(x)\big)
\end{align*}
which is exactly the derived map.\\
Let us notice that quasi racks from \cref{ex_idemp} and \cref{exs_quasi} are such quasi racks.
\end{rem}

\medskip

\section{P{\l}onka sum of racks}\label{sec:Plonka}

In this section, we focus on the notion of the  P{\l}onka sum of racks, according to the general notion of \emph{sum of a direct system of algebras} (or \emph{sum of a semilattice ordered system of algebras}) introduced in \cite{Plo67, Plo67-2, PloRom92}. 

\smallskip

\begin{defin}  
Let $Y$ be a (lower) semilattice and $\{\left(X_\alpha, \triangleright_\alpha\right) \mid \alpha\in Y\}$ a family of disjoint racks indexed by $Y$. For all $\alpha,\beta\in Y$ such that $\alpha\geq \beta$, let $\phi_{\alpha,\beta}:X_{\alpha}\to X_{\beta}$ be a rack homomorphism such that 
    \begin{enumerate}
     \item $\phi_{\alpha,\alpha}=\id_{X_{\alpha}}$, for every $\alpha \in Y$;
     \item $\phi_{\beta,\gamma}\phi_{\alpha, \beta} = \phi_{\alpha, \gamma}$, for all $\alpha,\beta,\gamma \in Y$ such that $\alpha\geq\beta\geq\gamma$.
     \end{enumerate} 
Set $X:=\displaystyle \bigcup_{\alpha \in Y} X_\alpha$. Then, the operation 
\begin{align}\label{triangle}
    \forall\, a\in X_\alpha, \ \forall\,  b\in X_\beta\qquad
    a\triangleright b:= \phi_{\alpha,\alpha\beta}(a)\triangleright_{\alpha\beta} \phi_{\beta,\alpha\beta}(b)
    \end{align}
endows $X$ with a structure of a shelf that we call \emph{P{\l}onka sum of the racks $\left(X_\alpha, \triangleright_\alpha\right)$ indexed by $Y$}.
\end{defin}

\smallskip

Note that if $X$ is a P{\l}onka sum of racks indexed by a certain $Y$, then the \emph{projection map} $\pi:X\to Y$ defined by $\pi(a) = \alpha$, if $a\in X_\alpha$ is a semigroup epimorphism. We underline that this happens in general for P{\l}onka sum of algebras. 

Our aim is to describe a class of quasi racks that are P{\l}onka sum of racks. As will be seen, these are quasi racks that give rise to a derived solution.

\smallskip

\begin{theor}\label{theo-Plonka_sums}
Let $\left(X, \triangleright\right)$ be a P{\l}onka sum of racks $\left(X_\alpha, \triangleright_\alpha\right)$ indexed by a {\rm(}lower{\rm)} semilattice $Y$. Then $\left(X, \triangleright\right)$ is a quasi rack such that the following identities 
\begin{align}\label{eq:L0&associative}
   L^0_a(a) = a
   \qquad\text{and}\qquad
   L^{0}_{L_a(b)} = L^0_aL^0_b,
\end{align}
are satisfied, for all  $a,b\in X$. Moreover, the derived map $r_\triangleright$ in \eqref{r_triang} associated to $\left(X, \triangleright \right)$ is a quasi bijective and quasi non-degenerate solution. 
\begin{proof}
    For convenience, for $a \in X_\alpha, b \in X_\beta$, we set
    \begin{align*}
        L^{[\alpha\beta]}_a(b):=a \triangleright b.
    \end{align*}
    Following the \cref{def:quasi-rack}, we show that, for all $\alpha, \beta \in Y$, $a \in X_\alpha$, $b \in X_\beta$,
    \begin{align*}
        L_a^-(b)=\left(L^{[\alpha\beta]}_{\phi_{\alpha, \alpha\beta}(a)}\right)^{-1}\phi_{\beta, \alpha\beta}(b).
    \end{align*}
    Let us compute 
    \begin{align*}L^-_aL_a(b)&=L_a^-\left(L^{[\alpha\beta]}_{\phi_{\alpha,\alpha\beta}(a)}\phi_{\beta,\alpha\beta}(b)\right)=\left(L^{[\alpha\alpha\beta]}_{\phi_{\alpha, \alpha\alpha\beta(a)}}\right)^{-1}\phi_{\alpha\beta, \alpha\alpha\beta}L^{[\alpha\beta]}_{\phi_{\alpha,\alpha\beta}(a)}\phi_{\beta,\alpha\beta}(b)\\
    &=\left(L^{[\alpha\beta]}_{\phi_{\alpha, \alpha\beta(a)}}\right)^{-1}\phi_{\alpha\beta, \alpha\beta}L^{[\alpha\beta]}_{\phi_{\alpha,\alpha\beta}(a)}\phi_{\beta,\alpha\beta}(b)\\
    &=\phi_{\beta, \alpha\beta}(b).
    \end{align*}
    Similarly, $L_aL_a^-(b)=\phi_{\beta, \alpha\beta}(b)$. Thus, set $L^0_a(b):=\phi_{\beta, \alpha\beta}(b)$, one has that
    \begin{align*}
        L_aL_a^-L_a(b)=L_aL_a^0(b)=L_a\left(\phi_{\beta, \alpha\beta}(b)\right)=L^{[\alpha\alpha\beta]}_{\phi_{\alpha, \alpha\alpha\beta}(a)}\phi_{\alpha\beta, \alpha\alpha\beta}\phi_{\beta, \alpha\beta}(b)=L_a(b)
    \end{align*}
    and, analogously, $L_a^-L_a^0(b)=L_a^-(b)$. In addition, if $x \in X_\gamma$,
    \begin{align*}
L_a^0L_x(b)=L_a^0\left(L^{[\beta\gamma]}_{\phi_{\gamma, \gamma\beta(x)}}\phi_{\beta, \beta\gamma}(b)\right)=\phi_{\beta\gamma, \alpha\beta\gamma}L^{[\beta\gamma]}_{\phi_{\gamma, \gamma\beta(x)}}\phi_{\beta, \beta\gamma}(b)=L^{[\alpha\beta\gamma]}_{\phi_{\gamma, \alpha\beta\gamma}(x)}\phi_{\beta, \alpha\beta\gamma}(b)
    \end{align*}
    and with similar computations $L_xL^0_a(b)=L^{[\alpha\beta\gamma]}_{\phi_{\gamma, \alpha\beta\gamma}(x)}\phi_{\beta, \alpha\beta\gamma}(b)$.\\
Finally, if $a \in X_\alpha$, $b \in X_\beta$, and $c \in X_{\gamma}$,
\begin{align*}
    L^0_aL^0_b(c) 
    = \phi_{\beta\gamma, \alpha\beta\gamma}\phi_{\gamma, \beta\gamma}(c)  
    = \phi_{\gamma, \alpha\beta\gamma}(c)
    = L^0_{L^{[\alpha\beta]}_{\phi_{\alpha, \alpha \beta}(a)}\phi_{\beta, \alpha\beta}(b)}(c)
    = L^0_{L_a(b)}(c).
\end{align*}
Moreover, 
$L_a^0(a)=\phi_{\alpha, \alpha}(a)=\id_{X_\alpha}(a)=a$. Therefore, 
by \cref{teo_quasibij}, the map $r_\triangleright$ is a quasi bijective and quasi non-degenerate solution. 
\end{proof}
\end{theor}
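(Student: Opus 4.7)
The plan is to construct explicitly the relative inverses $L_{a}^{-}$ inside the Płonka sum, exploiting the fact that each $X_{\alpha}$ is a genuine rack, so that left multiplications there are bijective. For $a\in X_{\alpha}$ and $b\in X_{\beta}$, the formula $a\triangleright b=\phi_{\alpha,\alpha\beta}(a)\triangleright_{\alpha\beta}\phi_{\beta,\alpha\beta}(b)$ shows that $L_{a}$ sends $X_{\beta}$ into $X_{\alpha\beta}$, factoring as the projection $\phi_{\beta,\alpha\beta}$ followed by the bijection $L^{[\alpha\beta]}_{\phi_{\alpha,\alpha\beta}(a)}$ inside the rack $X_{\alpha\beta}$. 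The natural candidate for the relative inverse is therefore
$$
L_{a}^{-}(b):=\bigl(L^{[\alpha\beta]}_{\phi_{\alpha,\alpha\beta}(a)}\bigr)^{-1}\bigl(\phi_{\beta,\alpha\beta}(b)\bigr),
$$
and a short computation using $\phi_{\alpha,\alpha}=\id$ and the cocycle rule $\phi_{\beta,\gamma}\phi_{\alpha,\beta}=\phi_{\alpha,\gamma}$ gives $L_{a}^{0}(b)=L_{a}L_{a}^{-}(b)=L_{a}^{-}L_{a}(b)=\phi_{\beta,\alpha\beta}(b)$. In particular, $L_{a}^{0}$ depends on $a$ only through its class $\alpha=\pi(a)\in Y$, which is the structural reason behind the commutation property.

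The verification of the quasi-rack axioms $L_{a}L_{a}^{-}L_{a}=L_{a}$ and $L_{a}^{-}L_{a}L_{a}^{-}=L_{a}^{-}$ is then immediate from the projective nature of $L_{a}^{0}$ together with $\phi_{\alpha\beta,\alpha\beta}=\id$. For the commutation $L_{a}^{0}L_{b}=L_{b}L_{a}^{0}$ with $b\in X_{\beta}$, evaluating on $c\in X_{\gamma}$ and using that every $\phi$ is a rack homomorphism, both sides reduce to
$$
L^{[\alpha\beta\gamma]}_{\phi_{\beta,\alpha\beta\gamma}(b)}\bigl(\phi_{\gamma,\alpha\beta\gamma}(c)\bigr),
$$
via the two cocycle identities $\phi_{\beta\gamma,\alpha\beta\gamma}\phi_{\gamma,\beta\gamma}=\phi_{\gamma,\alpha\beta\gamma}=\phi_{\alpha\gamma,\alpha\beta\gamma}\phi_{\gamma,\alpha\gamma}$.

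Finally, $L_{a}^{0}(a)=\phi_{\alpha,\alpha}(a)=a$ is immediate, and since $L_{a}(b)\in X_{\alpha\beta}$, one has $L_{L_{a}(b)}^{0}(c)=\phi_{\gamma,\alpha\beta\gamma}(c)=\phi_{\beta\gamma,\alpha\beta\gamma}\phi_{\gamma,\beta\gamma}(c)=L_{a}^{0}L_{b}^{0}(c)$, again by the cocycle rule. Thus conditions \eqref{astastast} and \eqref{ast} both hold, so \cref{teo_quasibij} (combined with \cref{teo-request0}) guarantees that the derived map $r_{\triangleright}$ is quasi bijective and quasi non-degenerate. The main obstacle here is organizational rather than conceptual: one must systematically track several semilattice indices and commutative triangles of Płonka maps, with every equality justified by $\phi_{\alpha,\alpha}=\id$, the cocycle rule, or the rack-homomorphism property of each $\phi_{\alpha,\beta}$.
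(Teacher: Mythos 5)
Your proposal is correct and follows essentially the same route as the paper: the same explicit formula for $L_a^-$, the same computation showing $L_a^0(b)=\phi_{\beta,\alpha\beta}(b)$, the same verification of the quasi-rack axioms and of conditions \eqref{ast} and \eqref{astastast} via the cocycle identities, and the same appeal to \cref{teo_quasibij} to conclude.
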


\smallskip

\begin{rem}
    Notice that, if each $(X_\alpha, \triangleright_\alpha)$ in \cref{theo-Plonka_sums} is a quandle, then $(X,\triangleright)$ is a quasi quandle.
\end{rem}

\smallskip

\begin{theor}
    Let $\left(X, \triangleright\right)$ be a quasi rack satisfying the equalities in \eqref{eq:L0&associative}. Then $\left(X, \triangleright\right)$ is a P{\l}onka sum of racks.
\end{theor}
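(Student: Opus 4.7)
The plan is to read off the semilattice and the racks from the idempotents $L_a^0$ themselves.

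\textbf{Step 1: The semilattice.} Let $Y := \{L_a^0 \mid a\in X\}$, viewed as a subset of the transformation monoid $X^X$, with operation given by composition. Since any two idempotents $L_a^0$ and $L_b^0$ commute, every element of $Y$ is idempotent and the product of two commuting idempotents is again a (commuting) idempotent. Moreover, by \eqref{eq:L0&associative}, the composite $L_a^0 L_b^0 = L^0_{L_a(b)}$ lies in $Y$, so $Y$ is a commutative band, equivalently a meet semilattice with $\alpha\leq\beta \iff \alpha\beta=\alpha$.

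\textbf{Step 2: Partition into racks.} For $\alpha\in Y$ set $X_\alpha := \{a\in X \mid L_a^0 = \alpha\}$; these sets are pairwise disjoint and cover $X$. Since $L^0_{a\triangleright b} = L_a^0 L_b^0$, the set $X_\alpha$ is closed under $\triangleright$. For $a\in X_\alpha$ one has $\alpha(a) = L_a^0(a) = a$ by \eqref{eq:L0&associative}, so $\alpha$ restricts to the identity on $X_\alpha$; consequently $L_a L_a^{-} = \id_{X_\alpha} = L_a^{-} L_a$ on $X_\alpha$, and $L_a\colon X_\alpha\to X_\alpha$ is bijective. Hence each $(X_\alpha,\triangleright)$ is a rack.

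\textbf{Step 3: The connecting maps.} For $\alpha\geq\beta$ define $\phi_{\alpha,\beta}\colon X_\alpha \to X$ by $\phi_{\alpha,\beta}(a) := \beta(a)$ (recall $\beta$ is a specific map in $X^X$, equal to $L_c^0$ for any $c\in X_\beta$). Using that $L^0_{L_c^0(a)} = L_c^0 L_a^0$, which follows from Lemma~\ref{lemma_Lx_gene}(4) combined with \eqref{eq:L0&associative} applied to $b = L_c^-(a)$, one checks that $L^0_{\beta(a)} = \beta\cdot\alpha = \beta$, so $\phi_{\alpha,\beta}(a)\in X_\beta$. The identities $\phi_{\alpha,\alpha}=\id_{X_\alpha}$ and $\phi_{\beta,\gamma}\phi_{\alpha,\beta}=\phi_{\alpha,\gamma}$ are immediate from $\alpha(a)=a$ for $a\in X_\alpha$ and from the semilattice relation $\gamma\beta=\gamma$.

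\textbf{Step 4: Homomorphism property and recovery.} For $a,b\in X_\alpha$ and $\beta\leq\alpha$, using the commutation $L^0_c L_a = L_a L^0_c$ together with Lemma~\ref{lemma_Lx_gene}(1) (which gives $L^0_c L_a = L^0_c L_{L^0_c(a)}$ and hence $L_a L^0_c = L_{L^0_c(a)} L^0_c$), one obtains
\[
\beta(a\triangleright b) = L^0_c L_a(b) = L_a L^0_c(b) = L_{L^0_c(a)} L^0_c(b) = \beta(a)\triangleright\beta(b),
\]
so $\phi_{\alpha,\beta}$ is a rack homomorphism. Finally, for $a\in X_\alpha$, $b\in X_\beta$ and $\gamma = \alpha\beta = L_a^0 L_b^0$, using $L^0_a(a) = a$ and $L^0_b(b) = b$ one has $\gamma(a) = L_b^0(a)$ and $\gamma(b) = L_a^0(b)$. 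By Corollary~\ref{cor-request}, the hypothesis \eqref{eq:L0&associative} implies \eqref{astast}, giving $L_a(b) = L_{L_b^0(a)}(b)$, and then
\[
\phi_{\alpha,\gamma}(a)\triangleright\phi_{\beta,\gamma}(b) = L_{L_b^0(a)}\bigl(L_a^0(b)\bigr) = L_a^0 L_{L_b^0(a)}(b) = L_a^0 L_a(b) = L_a(b) = a\triangleright b,
\]
which is precisely the P{\l}onka sum formula \eqref{triangle}.

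The main obstacle is Step 4: verifying both that $\phi_{\alpha,\beta}$ respects the shelf operation and that the P{\l}onka sum recovers the original $\triangleright$. Both reductions rest on the interplay between the commutation $L_a L_b^0 = L_b^0 L_a$, the consequences drawn in Lemma~\ref{lemma_Lx_gene}, and the key implication from \eqref{eq:L0&associative} that $L_a(b) = L_{L_b^0(a)}(b)$ holds in general, not just after post-composition with $L_b$.
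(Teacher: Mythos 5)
Your construction coincides with the paper's: the same semilattice $Y=\{L_a^0\mid a\in X\}$, the same blocks $X_\alpha=\{a\in X\mid L_a^0=\alpha\}$, the same connecting maps $\phi_{\alpha,\beta}=\beta|_{X_\alpha}$, and the same verifications of the homomorphism property and of the recovery of $\triangleright$ (you route the last step through \eqref{astast} via \cref{cor-request}, while the paper uses \eqref{ast} together with \cref{lemma_Lx_gene} directly, but these amount to the same two lines).

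The one place where your argument is genuinely incomplete is the bijectivity claim in Step~2. From $L_a^-L_a=\alpha=\id$ on $X_\alpha$ you do get injectivity of $L_a|_{X_\alpha}$, and from $L_aL_a^-(y)=y$ you get that every $y\in X_\alpha$ has a preimage under $L_a$ somewhere in $X$; but to conclude surjectivity of $L_a\colon X_\alpha\to X_\alpha$ you must show that this preimage $L_a^-(y)$ actually lies in $X_\alpha$, i.e.\ that $L^0_{L_a^-(y)}=\alpha$. This is not automatic: $L_a$ need not be injective on all of $X$, and \eqref{ast} controls $L^0_{L_a(\cdot)}$, not $L^0_{L_a^-(\cdot)}$. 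The paper devotes a displayed computation to precisely this point. The fix is already contained in your Step~3: since $L_a^0L_a^-=L_a^-L_aL_a^-=L_a^-$, one has $L_a^-(y)=L_a^0\bigl(L_a^-(y)\bigr)$, so your identity $L^0_{L^0_a(z)}=L^0_aL^0_z$ together with item~4 of \cref{lemma_Lx_gene} gives $L^0_{L_a^-(y)}=L_a^0L^0_{L_a^-(y)}=L_a^0L_y^0=\alpha$. With this inserted, the proof is complete and matches the paper's.
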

\begin{proof}
    Let $Y:=\{L_a^0 \ | \ a\in X\}$ and $\pi:X\to Y$ the surjection defined by $\pi(a) = L_a^{0}$, for all $a\in X$. It is easy to check that the second identity in \eqref{eq:L0&associative} and the properties of the maps $L_a^0$ of the quasi rack $X$ make $Y$ a semilattice. 
    Moreover, again by the second identity in \eqref{eq:L0&associative}, we obtain that $\pi$ is a homomorphism from $\left(X, \triangleright\right)$ into $Y$. 
    Let $\sim_{\pi}$ be the equivalence relation induced by $\pi$ and define $X_{\pi(a)}:= [a]_{\sim_\pi}$, for all $a \in X$. In addition, for all $\pi(a)\geq \pi(b)$, let $\phi_{\pi(a), \pi(b)}:X_{\pi(a)}\to X_{\pi(b)}$ be the map defined by 
    $$
    \phi_{\pi(a), \pi(b)}(x) = \pi(b)(x)=L^0_b(x),
    $$
    for all  $x\in X_{\pi(a)}$. Moreover, in the following, let us denote by $\triangleright_{\pi(a)}$ the restriction of $\triangleright$ to $X_{\pi(a)}$, for all $\pi(a) \in Y$.\\
    First, we show that $\left(X_{\pi(a)}, \triangleright_{\pi(a)}\right)$ is a rack, for all $\pi(a) \in Y$.
    In fact, if $x, y \in X_{\pi(a)}$, then
    \begin{center}
        $\pi\left(x \triangleright_{\pi(a)} y\right)=L^0_{L_x(y)}=L^0_xL^0_y=L^0_a=\pi(a)$.
    \end{center} 
Hence, $x \triangleright_{\pi(a)} y \in X_{\pi(a)}$. Furthermore, we show that the map $L_x: X_{\pi(a)} \to X_{\pi(a)}$ is bijective, for all $x \in X_{\pi(a)}$. Observe that, if $x, y \in\pi(a)$,  
\begin{align*}
    L^0_{L^-_x(y)}
    = L^0_{L_a(L_a^-L_x^-(y))}
    \underset{\eqref{eq:L0&associative}}{=}
L^0_aL^0_{L^-_aL^-_x(y)}\underset{\ref{lemma_Lx_gene}-4.}{=}L^0_aL^0_{L^-_x(y)}=L^0_xL^0_{L^-_x(y)}\underset{\ref{lemma_Lx_gene}-4.}{=}L^0_xL^0_y=L^0_a,
\end{align*}
which means that $L^-_x(y)\in \pi(a)$. Hence,  $L_xL_x^-(y) = L_x^-L_x(y) = L^0_x(y) = L^0_y(y) =y$. \\
In addition, if $\pi(a) \geq \pi(b) \in Y$ and $x, y \in X_{\pi(a)}$, then
\begin{align*}
    \phi_{\pi(a), \pi(b)}(x \triangleright_{\pi(a)} y)=L^0_bL_x(y)\underset{\ref{lemma_Lx_gene}}{=}L_{L^0_b(x)}L^0_b(y)=\phi_{\pi(a), \pi(b)}(x) \triangleright_{\pi(b)} \phi_{\pi(a), \pi(b)}(y).
\end{align*}
Finally, if $\pi(a), \pi(b) \in Y$ and $x \in X_{\pi(a)}, y \in X_{\pi(b)}$, 
\begin{align*}
   x \triangleright y&= x \triangleright L^0_y(y)=L_xL^0_aL^0_b(y)\underset{\eqref{eq:L0&associative}}{=}L_xL^0_{L_a(b)}(y)\\
   &\underset{\ref{lemma_Lx_gene}}{=} L_{L^0_{L_a(b)}(x)}L^0_{L_a(b)}(y)\\
   &=L^0_{L_a(b)}(x) \triangleright L^0_{L_a(b)}(y)\\
   &=\phi_{\pi(a), \pi(a \ \triangleright \ b)}(x) \triangleright_{\pi(a \ \triangleright\  b)} \phi_{\pi(b), \pi(a \ \triangleright \ b)}(y)\\
   &=\phi_{\pi(a), \pi(a) \wedge \pi(b)}(x) \triangleright_{\pi(a) \wedge \pi(b} \phi_{\pi(b), \pi(a) \wedge \pi(b)}(y).
\end{align*}
Therefore, the claim follows.
\end{proof}

\smallskip

\begin{ex}
Let $Y$ be a (lower) semilattice and $\{\left(X_\alpha, \triangleright_\alpha\right) \, | \, \alpha\in Y\}$ a family of disjoint groups indexed by $Y$. Consider on each group $X_\alpha$ the conjugation (resp. core) quandle $\left(X_\alpha, \triangleright_\alpha\right)$. For all $\alpha,\beta\in Y$ such that $\alpha\geq \beta$, let $\phi_{\alpha,\beta}:X_{\alpha}\to X_{\beta}$ be groups homomorphism such that $\phi_{\alpha,\alpha}=\id_{X_{\alpha}}$, for every $\alpha \in Y$, and $\phi_{\beta,\gamma}\phi_{\alpha, \beta} = \phi_{\alpha, \gamma}$, for all $\alpha,\beta,\gamma \in Y$ such that $\alpha\geq\beta\geq\gamma$.
Then, set $X:=\displaystyle \bigcup_{\alpha \in Y} X_\alpha$, the pair $(X, \triangleright)$, with $\triangleright$ the operation \eqref{triangle} is a quasi rack. In particular, $X$ is a strong semilattice of groups, equivalently, a Clifford semigroup, and $(X, \triangleright)$ is the conjugation (resp. core) quasi quandle in \cref{exs_quasi}. The vice versa also holds by simply applying the definitions. 
\end{ex}

\smallskip

\begin{rem}
    Note that the quasi racks in~\cref{ex_idemp} and~\cref{exs_quasi}-2 cannot be a P{\l}onka sum of racks. Indeed, neither of them satisfies the condition~\eqref{L0}.
    Additionally, let us notice that observation made in 3. of~\cref{rem-ex-astastast-noast} ensures that there exist quasi racks that satisfy~\eqref{astastast} but not~\eqref{ast}.
\end{rem}

\smallskip

To conclude this section, we present the following theorem, which provides a description of the solutions arising from P{\l}onka sums of racks. It makes use of the same technique employed for the construction of quasi racks. In fact, \cite[Theorem 4.1]{CaCoSt21} provides a method for obtaining a new solution as \emph{strong semilattice} of known solutions.
\begin{theor}
    Let $\left(X, \triangleright\right)$ be a P{\l}onka sum of racks $\left( X_\alpha, \triangleright_\alpha \right)$ indexed by a {\rm(}lower{\rm)} semilattice $Y$. Then the solution $r_\triangleright$ in \eqref{r_triang} associated to $\left(X, \triangleright \right)$ is the strong semilattice of the derived solutions $\left(X_\alpha, r_\alpha \right)$ associated to each rack $X_\alpha$. In particular,
    \begin{align*}
        r_\triangleright(a, b)=r_{\alpha\beta}\left( \phi_{\alpha, \alpha\beta}(a), \phi_{\beta, \alpha\beta}(b)\right),
    \end{align*}
    for all $a \in X_\alpha$, $b \in X_\beta$.
\end{theor}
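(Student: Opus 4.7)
The plan is to establish the formula first by direct computation and then recognize it as fitting the strong semilattice framework of \cite{CaCoSt21}. Since the heavy lifting has already been done in the preceding theorem on quasi rack structure of a P{\l}onka sum, the argument is largely a matter of unpacking the formulas.

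First I would recall from the proof of the preceding theorem that, for $a \in X_\alpha$ and $b \in X_\beta$, one has $L^0_a(b) = \phi_{\beta, \alpha\beta}(b)$, while $L_b(a) = \phi_{\beta,\alpha\beta}(b) \triangleright_{\alpha\beta} \phi_{\alpha,\alpha\beta}(a)$ directly from the definition \eqref{triangle} of the P{\l}onka sum operation. Plugging these into the definition $r_\triangleright(a,b) = (L^0_a(b), L_b(a))$ yields
\begin{equation*}
r_\triangleright(a,b) = \bigl(\phi_{\beta, \alpha\beta}(b),\ \phi_{\beta,\alpha\beta}(b) \triangleright_{\alpha\beta} \phi_{\alpha,\alpha\beta}(a)\bigr).
\end{equation*}
Since each $\left(X_\gamma, \triangleright_\gamma\right)$ is a rack, its derived solution takes the usual form $r_\gamma(x,y) = (y, y \triangleright_\gamma x)$ by \cref{YBE_and_shelf}. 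Applying this to $\gamma = \alpha\beta$ with arguments $\phi_{\alpha,\alpha\beta}(a)$ and $\phi_{\beta,\alpha\beta}(b)$ exactly reproduces the right-hand side above, establishing the explicit formula stated in the theorem.

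Second, to identify this explicit formula with the strong semilattice of the family $\{(X_\alpha, r_\alpha)\}_{\alpha \in Y}$ in the sense of \cite[Theorem 4.1]{CaCoSt21}, I would verify that the transition maps $\phi_{\alpha,\beta}:X_\alpha \to X_\beta$ for $\alpha \geq \beta$ are morphisms of solutions, that is,
\begin{equation*}
(\phi_{\alpha,\beta} \times \phi_{\alpha,\beta})\, r_\alpha = r_\beta\, (\phi_{\alpha,\beta} \times \phi_{\alpha,\beta}).
\end{equation*}
This is a direct consequence of $\phi_{\alpha,\beta}$ being a rack homomorphism: applying $\phi_{\alpha,\beta} \times \phi_{\alpha,\beta}$ to $r_\alpha(x,y) = (y, y\triangleright_\alpha x)$ gives $(\phi_{\alpha,\beta}(y), \phi_{\alpha,\beta}(y) \triangleright_\beta \phi_{\alpha,\beta}(x))$, which is $r_\beta(\phi_{\alpha,\beta}(x), \phi_{\alpha,\beta}(y))$. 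The identity and composition conditions $\phi_{\alpha,\alpha} = \id_{X_\alpha}$ and $\phi_{\beta,\gamma}\phi_{\alpha,\beta} = \phi_{\alpha,\gamma}$ are inherited from the P{\l}onka sum data, so the family $\{(X_\alpha, r_\alpha), \phi_{\alpha,\beta}\}$ forms a semilattice-indexed system of solutions in the required sense.

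I do not expect a significant obstacle here: the preceding theorem already produced all the needed ingredients, so the main task is bookkeeping to ensure that the P{\l}onka sum data matches the template of \cite[Theorem 4.1]{CaCoSt21}. The only subtle point is to remark that the recent work \cite{St25} formally equates P{\l}onka sums of solutions with strong semilattices of solutions, which secures the equivalence between the two presentations used in the statement.
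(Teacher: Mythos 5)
Your proposal is correct and follows essentially the same route as the paper: both reduce the statement to checking the compatibility condition $\left(\phi_{\alpha,\beta}\times\phi_{\alpha,\beta}\right)r_\alpha = r_\beta\left(\phi_{\alpha,\beta}\times\phi_{\alpha,\beta}\right)$ via the fact that the $\phi_{\alpha,\beta}$ are rack homomorphisms, and then invoke \cite[Theorem 4.1]{CaCoSt21}. Your additional explicit unpacking of $r_\triangleright(a,b)$ using $L^0_a(b)=\phi_{\beta,\alpha\beta}(b)$ is a correct elaboration of a step the paper leaves implicit.
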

    \begin{proof}
       It is a consequence of  \cite[Theorem 4.1]{CaCoSt21}. In fact, by this result, one only has to check that $\left( \phi_{\alpha, \beta} \times \phi_{\alpha, \beta}   \right)r_\alpha=r_\beta\left( \phi_{\alpha, \beta} \times \phi_{\alpha, \beta}   \right)$, for all $\alpha, \beta \in Y$ such that $\alpha \geq \beta$. It is easily seen that this follows from the definition of derived solution and from the fact that $\phi$ is a rack homomorphism.      
    \end{proof}

\bigskip

\section*{Acknowledgements}
\noindent This work was partially supported by the University of Salento - Department of Mathematics and Physics ``E. De Giorgi”. M. Mazzotta is supported by AGRI@INTESA - ``National Centre for HPC, Big Data and Quantum Computing", CUP: F83C22000740001.\\
M. Wiertel is supported by the Polish National Agency for Academic Exchange within Bekker Programme BPN/BEK/2024/1/00311/U/00001.\\
M. Mazzotta and P. Stefanelli are members of GNSAGA (INdAM) and the nonprofit association ``AGTA-Advances in Group Theory and Applications". 



\bigskip
\bigskip
\bibliography{bibliography}

\end{document}